\theoremstyle{plain}
\newtheorem{theorem}{Theorem}[section]
\newtheorem{proposition}[theorem]{Proposition}
\newtheorem{lemma}[theorem]{Lemma}
\newtheorem{corollary}[theorem]{Corollary}
\theoremstyle{definition}
\newtheorem{definition}[theorem]{Definition}
\theoremstyle{remark}
\newtheorem{remark}[theorem]{Remark}
\numberwithin{equation}{section} 
\numberwithin{figure}{section} 
\numberwithin{theorem}{section}
\newtheorem{example}[theorem]{Example}
\begin{document}
\begin{flushleft}
{\LARGE{\bf Second-order sensitivity relations and regularity of the value function for Mayer's problem in optimal control}}

\vskip0.5\baselineskip{\bf P. Cannarsa \footnotemark[1], H. Frankowska \footnotemark[2], T. Scarinci \footnotemark[1]\footnotemark[2]}
\end{flushleft}

\footnotetext[1]{Dipartimento di Matematica,
Universit\`a di Roma Tor Vergata, Via della Ricerca Scientifica 1, 00133 Roma, Italy. }
\footnotetext[2]{CNRS, IMJ-PRG, UMR 7586, Sorbonne Universit\'es, UPMC Univ Paris 06, UnivParis Diderot, Sorbonne Paris Cit\'e, Case 247, 4 Place Jussieu, 75252 Paris, France.}
\section*{Abstract}
This paper investigates the value function, $V$, of a Mayer optimal control problem with the state equation  given by a differential inclusion. First, we obtain an invariance property for the proximal and Fr\'echet subdifferentials of $V$ along optimal trajectories. Then, we extend the analysis to the sub/superjets of $V$, obtaining new sensitivity relations of second order. By applying sensitivity analysis to exclude the presence of conjugate points, we deduce that the value function is twice differentiable along any optimal trajectory starting at a point at which $V$ is proximally subdifferentiable. We also provide sufficient conditions for the local $C^2$ regularity of $V$ on tubular neighborhoods of optimal trajectories.

\begin{flushleft}
{\bf Keywords:} Mayer problem, differential inclusion, sensitivity relations, Riccati equation.\\
{\bf MSC Subject classifications:} 34A60, 49J53.
\end{flushleft}
\section{Introduction}
The value function $V$ of the Mayer optimal control problem plays
a fundamental role in the investigation of optimal trajectories
and optimal controls. Being the unique solution of the
Hamilton-Jacobi-Bellman equation
\begin{equation}\label{HJB}
-V_t  + H(x,-V_x )=0, \;\; V(T,\cdot)= \phi(\cdot),
\end{equation}
it was intensively studied since the late fifties. If $V$ is
differentiable, then,  given an optimal trajectory $\bar x(\cdot)$, the function $t \to p(t):=- \nabla_x V(t,\bar x(t))$ is
the adjoint state (dual arc) of the celebrated maximum principle, see for instance
\cite{MR0454768}.  Hence, \eqref{HJB} implies the following
sensitivity relation involving the dual arc  $p(\cdot)$:
\begin{equation}\label{SR}
(H(\bar x(t),p(t)), -p(t))=\nabla V(t,\bar x(t)) .
\end{equation}
However, in general,  $V$ is  not differentiable.

Since the  first generalizations of \eqref{SR} in the seminal
papers by Clarke and Vinter~\cite{Clarke:1987:RMP:35498.35509} and
Vinter~\cite{MR923279}, sensitivity analysis has been developed
for different kinds of optimal control problems, leading to
interesting applications to optimal synthesis
and regularity of the value function. Typically, sensitivity relations are given in the form of inclusions of the pair formed by the dual arc and the Hamiltonian, evaluated along an optimal trajectory,
 into a suitable generalized differential of the value function. Such a differential, which was originally identified with Clarke's generalized gradient,
 was later restricted to the Fr\'echet superdifferential in many different situations, including the Bolza, minimum time  and Mayer problems, with or without state constraints
  (see, e.g., \cite{Cannarsa:1991:COT:120771.118946,MR894990,Subb2,MR1780579,Nostro,Bettiol:2010:SIC:1958083.1958102,frankowska:hal-00800199,BetFRanVint}).
   Such restriction is of a crucial importance, because it also allows to
   get sufficient optimality conditions and describe the optimal
   synthesis, cf. \cite{MR894990,Cannarsa:1991:COT:120771.118946}. We  refer also to \cite{Goebel} for a construction
   of optimal feedback for a  convex  Bolza problem using subdifferentials of the value function.  Sensitivity
relations for nonsmooth value function of the Mayer  problem are
also useful for investigation of some geometric properties of
reachable sets, see for instance  \cite{lorenz-boundary}.

This paper, which is focussed on Mayer's problem for differential
inclusions, differs from the aforementioned references mainly in
two aspects:

1.  We concentrate on sensitivity relations for the proximal and
the Fr\'echet subdifferentials, instead of superdifferentials.

2.  We aim to obtain second-order inclusions, that is, inclusions
of suitable dual pairs into second-order subjets and superjets of
$V(t,\cdot)$.

While the validity of first-order subdifferential inclusions for the adjoint state has already been observed and exploited in \cite{frankowska:hal-00851752} for the calculus of variations and
in \cite{Cannarsa2013791} for the Bolza optimal control problem,
to the best of our knowledge this is for the first time that
second-order sensitivity relations are obtained for optimal
control problems of any kind. Such relations turn out to be very
useful for the analysis of the regularity of the value function: in
this paper we apply them to study the second-order
differentiability of $V$ along optimal trajectories.

To become more specific, given a locally Lipschitz final cost
$\phi: \mathbb{R}^n \rightarrow \mathbb{R}$, for any fixed
$(t_0,x_0)\in (-\infty,T]\times\mathbb{R}^n$ consider the Mayer
problem:
\begin{equation}\label{Mayer}
\mbox{minimize}\; \phi(x(T)),
\end{equation}
over all absolutely continuous mappings $x:[t_0,T]\rightarrow
\mathbb{R}^n$ satisfying the differential inclusion
\begin{equation}\label{May1}
\dot{x}(s)\in F(x(s)), \quad \mbox{ for  a.e. } s \in [t_0,T],
\end{equation}
with the initial condition
\begin{equation}\label{May2}
x(t_0)=x_0.
\end{equation}
Throughout the paper, $F:\mathbb{R}^n \rightrightarrows
\mathbb{R}^n$ is a given multifunction that satisfies a certain
set of  assumptions. In particular, the Hamiltonian
$H:\mathbb{R}^n\times \mathbb{R}^n \rightarrow \mathbb{R}$, defined
as
\begin{equation}\label{intro:H}
H(x,p)= \sup_{v \in F(x)} \langle v,p\rangle,
\end{equation}
is assumed to be semiconvex with respect to $x$ and differentiable
with respect to $p\neq 0$ with a locally Lipschitz gradient
$\nabla_p H(\cdot, p)$. Consider now an optimal trajectory
$\overline{x}$ of the Mayer problem and let $\overline{p}$ be any
dual arc associated with $\overline{x}$, that is,
$(\overline{x},\overline{p})$ is a solution of  the Hamiltonian
system
\begin{equation}\label{eq:char}
\begin{cases}
\hspace{.cm} \hspace{.3cm}\dot x(t)\in \partial_p^- H(x(t),p(t)) ,
\quad x(t_0)&=x_0,
\\
\hspace{.cm}
-\dot p(t)\in \partial_x^- H(x(t),p(t))
,\quad
-p(T) &\in \partial \phi(\overline{x}(T) )
\end{cases}
\quad \mbox{for a.e. } t\in[t_0,T],
\end{equation}
where $\partial_p^-H$ and $\partial_x^-H$ stand for the
subdifferentials of $H(x,\cdot)$  and $H(\cdot , p)$, respectively
\footnote{We  recall that $p(t) \neq 0$ for all $t \in [t_0,T]$
whenever $p(T)\neq 0$. In this case, the first inclusion in
\eqref{eq:char} becomes $\dot{x}(t)=\nabla_p H (x(t),p(t))$ for
all $t\in [t_0,T]$.}, and $\partial \phi$ for the generalized
gradient of $\phi$. In Section $3$, we show that
\begin{equation}\label{intro:sub}
-\overline{p}(t)\in \partial_{x}^{-} V(t,\overline{x}(t))\quad
\mbox{ for all } t \in [t_0,T],
\end{equation}
whenever the above inclusion is satisfied at $t_0$.
 This fact,
together with a similar result for the superdifferential obtained
in \cite{Nostro}, enables us to deduce that Fr\'echet
differentiability of $V(t,\cdot)$ propagates along optimal
trajectories whenever $\phi$ is differentiable (see
Proposition~\ref{Differentiability} below for a more general
statement). Furthermore, when $\phi$ is locally semiconcave, we
get the full first-order sensitivity relation \eqref{SR} on
$[t_0,T)$. Note that, if $V(t,\cdot)$ is differentiable at $\bar
x(t)$, then (\ref{intro:sub}) implies the well-known relation
$\bar p(t)=-\nabla _xV(t,\bar x(t))$. That is, (\ref{intro:sub})
links solutions of the so-called characteristic system
(\ref{eq:char}) to the gradient $\nabla_x V(t,\cdot)$ of the value
function along the trajectory $\bar x$.

If $V(t,\cdot)$ is twice Fr\'echet differentiable at $\bar x(t)$
for all $t \in [t_0,T]$ and $H$ is sufficiently smooth on a
neighborhood of $ \cup_{t \in [t_0,T] }(\bar x (t),\bar p (t))$,
then the Hessian $-\nabla^2_{xx} V(t,\bar x(t))$ is the solution
of the celebrated matrix Riccati equation:  $R(T)=- \nabla^2
\phi(\bar x(T))$ and
\begin{equation}\label{intro:RiccatiInc2}
\dot{R}(t)+ H_{px}[t]R(t)+R(t)H_{xp}[t] + R(t)H_{pp}[t]R(t) +
H_{xx}[t]=0,
\end{equation}
where $H_{px}[t]$ abbreviates $\nabla^2_{px} H
(\overline{x}(t),\overline{p}(t)),$ and similarly for $H_{xp}[t],
H_{pp}[t], H_{xx}[t]$.

Matrix Riccati equations have been intensively studied in connection with the  framework of the linear quadratic regulator problems and
have led to many applications. More recently, there has been interest
in their state dependent analogues, and extensions of  optimal
feedback expressions, based on solutions of a Riccati equation,  to
more general control problems have been sought.

Since, in general, the value function is not differentiable, in
what concerns  second-order sensitivity relations it is natural
to study  sub- and superjets. These sets provide  second-order upper and lower approximations of the value function in the
same way as the Fr\'echet sub- and superdifferentials do for first
order estimates (see Section $2.4$ below for the definition of superjets and
subjets). Such second-order sensitivity relations seem to be
absent from the current literature.

To derive a second-order analogue of \eqref{intro:sub}, set $q_0
:= \overline{p}(t_0)$, where $\overline{p}$ still denotes a dual
arc associated with $\overline{x}$, and let $Q_0$ be any symmetric
$n\times n$ matrix such that  $(-q_0,-Q_0)\in J^{2,-}_x
V(t_0,x_0)$, where the latter stands for the subjet of the
function $V(t_0,\cdot)$ evaluated at $x_0$. Assuming $H$ of class
$C^{2,1}(\mathbb{R}^n\times ( \mathbb{R}^n\smallsetminus \lbrace
0\rbrace ))$ and $\overline{p}\neq 0$, we consider
equation (\ref{intro:RiccatiInc2}) with the initial condition
$R(t_0)= - Q_0$.  Such a Riccati equation has a unique solution on its maximal
interval of existence which may escape to infinity in a finite
time because of the presence of a quadratic term. This is the main reason for
loosing differentiability of $V(t,\cdot)$, even when the cost
function $\phi$ and the Hamiltonian are smooth. So, let $a\in (t_0,T]$ be such that $R(\cdot)$ is well defined on the interval $[t_0,a]$. Then, we prove the following second-order
sensitivity relation for  the subjet of $V(t,\cdot)$ along
$\overline{x}$ :
\begin{equation}\label{intro:TesiSub}
(-\overline{p}(t),-R(t))\in J^{2,-}_x V(t,\overline{x}(t)) \quad \mbox{ for all } t\in [t_0,a].
\end{equation}
Moreover, when $\phi$ is locally semiconcave, we show that the
solution to \eqref{intro:RiccatiInc2} is global so that the above
inclusion is satisfied for every $t\in [t_0,T]$ (see Theorem
\ref{IncluRiccatiSub}). Finally, we complete our analysis of
sensitivity relations with a similar result for superjets, which
holds true backwards starting from time $T$ (see Theorem
\ref{Theo_SuperJet}).

In the last part of this paper, we use sensitivity analysis and comparison results for Riccati equations to investigate the second-order regularity of
 $V(t,\cdot)$ along an optimal trajectory $\overline{x}$.
Namely, suppose that $V(t_0,\cdot)$ is twice Fr\'echet
differentiable at $x_0$ and let $\bar x $ be an optimal trajectory
having a nonvanishing dual arc $\bar p$. Then we show that
$V(t,\cdot)$ stays twice Fr\'echet differentiable at
$\overline{x}(t)$ on the  interval of existence of  the solution
$R(\cdot)$ to  \eqref{intro:RiccatiInc2} with $R(t_0)=
-\nabla^2_{xx}V(t_0,x_0)$, and moreover
$R(t)=-\nabla^2_{xx}V(t,\overline{x}(t))$ (see Theorem
\ref{PropForward}). Furthermore, when $\phi$, hence $V$, is
locally semiconcave, such an interval
 extends to the whole set $[t_0,T]$. A similar statement holds true also ``backward" in time:  if
$\phi$   is twice Fr\'echet differentiable at $\bar x(T)$, then under some
technical assumptions (see Theorem \ref{Back_prop} for the details), $V(t,\cdot)$ is twice Fr\'echet differentiable at
$\bar x(t)$ for all $t$ such that the solution  $R(\cdot)$ of
(\ref{intro:RiccatiInc2}), with the final condition $R(T)=-\nabla
^2 \phi (\bar x(T))$, exists on $[t,T]$.

Finally, we investigate the local $C^2$ regularity of $V$ when the
cost $\phi$ is of class $C^2$.  For this aim we first guarantee
that the solution to the relevant Riccati equation exists globally
by proving the first-order sensitivity relation (\ref{intro:sub})
for  proximal---rather than Fr\'echet---subdifferentials. This allows us to deduce that if $V(t_0,\cdot)$
has a nonempty proximal subdifferential at $x_0$ and $\bar x$ is
an optimal trajectory starting at $x_0$ at time $t_0$, then $V(t,
\cdot)$ is of  class $C^2$ in a neighborhood of $\overline{x}(t)$
for all $t \in [t_0,T]$ (see Theorem
\ref{RegularityValueFunction}).  Let us recall that every lower
semicontinuos function defined on an open subset  $\Omega$ of a
finite dimensional space has a nonempty proximal subdifferential
on a dense subset of  $\Omega$. That is, in some sense,  our result
is about the generic local $C^2-$regularity of value functions
when data are sufficiently smooth. Earlier results in this
direction were obtained for the Bolza problem in
\cite{frankowska:hal-00851752,Cannarsa2013791} for $H$ strongly
convex with respect to $p$.  Since this is not the case of the
Hamiltonian associated to the Mayer problem, our results can not
be deduced from these earlier works and our proof is based on
different arguments.

This paper is organized as follows. In Section $2$, we fix the notation and we recall  basic material for later use.
 Section $3$ is devoted to first- and second-order sensitivity relations. Finally, Section $4$ concerns the propagation of
the  second-order Fr\'echet differentiability of $V(t,\cdot)$, as well as local $C^2$ regularity, along optimal trajectories.
\section{Preliminaries}
In this section, we review the basic concepts that are used in the sequel.
\subsection{Notation and basic facts}
Here, we quickly list the notation, various definitions, and basic facts. Further details can be found in several sources, for instance in
 \cite{MR1048347,MR2041617, MR709590,MR2662630}.

We denote by $|\cdot|$ the Euclidean norm in $\mathbb{R}^n$, by
$\langle \cdot,\cdot \rangle$ the inner product  and by  $[a, b]$
the segment connecting the points $a$ and $b$ of $\mathbb{R}^n$.
$B(x,\epsilon)$ and $\mathring{B}(x,\epsilon)$ are, respectively,
the closed and open balls of radius $\epsilon > 0$ centered at
$x$, and $S^{n-1}$ the unit sphere in $\mathbb{R}^n$. Moreover, we
use the shortened notation $B=B(0,1)$.  Define $a^+= \max\lbrace
a,0\rbrace $ for all $a\in\mathbb{R}$. For any subset $E$ of
$\mathbb{R}^n$, denote its boundary by $\partial E$ and its convex
hull by co$\,E$.

$M(n)$ is the set of $n\times n$ real matrices, $S(n)$ is the set
of symmetric $n\times n$ real matrices,  $\parallel Q \parallel$
denotes the
 operator norm and  $Q^*$ the transpose of $Q$ for any $Q\in M(n)$,    while $I_n$ is the $n\times n$ identity
 matrix. Recall that  $\parallel Q \parallel = \sup\{ |\langle Ax,x \rangle | : x \in S^{n-1} \}
 $  for every $
Q \in S(n) .$ \\
  For an \emph{extended real-valued function} $f:\mathbb{R}^n \rightarrow [-\infty,+\infty]$,
$dom (f):= \{ x\in \mathbb{R}^n:\ f(x)\neq \pm \infty\}$ is called
the domain of $f$. If $f : [t_0,t_1] \rightarrow \mathbb{R}^n$ is
continuous, define $\|f\|_{\infty} = \max_{t\in[t_0,t_1]} |f(t)|$.
$C([t_0,t_1];\mathbb{R}^n)$ and $W^{1,1}\left([t_0,t_1];
\mathbb{R}^n\right)$ are the spaces of all continuous and
absolutely continuous functions $x:[t_0,t_1] \rightarrow
\mathbb{R}^n$, respectively.
Moreover, we usually refer to an absolutely continuous function $x:[t_0,t_1] \rightarrow \mathbb{R}^n$ as an arc. The space $C^k(\Omega)$,
 where $\Omega$ is an open subset of $\mathbb{R}^n$, is the space of all functions that are continuously differentiable $k$ times on $\Omega$.
  The  H\"{o}lder space $C^{k,m}(\Omega)$ consists of those functions having continuous derivatives up to order $k$ and such that the $k-$th partial derivatives are
   H\" {o}lder continuous with exponent $m$, where $0<m\leq 1$.  Consider a locally Lipschitz function $f: \Omega \subset \mathbb{R}^n \rightarrow \mathbb{R}$,
    where $\Omega$ is an open set. The gradient of $f$ is $\nabla f(\cdot)$, which exists a.e. in $\Omega$. Moreover, if $f$ is twice Fr\'echet differentiable
    at some $x\in \Omega$, then  $\nabla^2 f(x)$ denotes the second-order Fr\'echet derivative of $f$ at $x$, also called Hessian of $f$ at $x$.
    Note that $\nabla^2 f(x)$ is a symmetric matrix (see, e.g., \cite{dieudonne2006foundations}).\\
Let $f:\Omega \rightarrow \mathbb{R}$ be any real-valued function
defined on an open set $\Omega\subset\mathbb{R}^n$.  For any
$x\in\Omega$, the sets
\[
\partial^- f(x)=\left\lbrace p\in\mathbb{R}^n : \liminf_{y \rightarrow x} \frac{f(y)-f(x)-\langle p, y - x\rangle  }{\mid y - x \mid}\geq 0 \right\rbrace,
\]
\[
\partial^+ f(x)=\left\lbrace p\in\mathbb{R}^n : \limsup_{y \rightarrow x} \frac{f(y)-f(x)-\langle p, y - x\rangle  }{\mid y - x \mid}\leq 0 \right\rbrace
\]
are the \emph{(Fr\'echet) subdifferential} and \emph{superdifferential} of $f$ at $x$, respectively. Furthermore, a vector $p\in \mathbb{R}^n$ is said to
 be a \emph{proximal subgradient} of $f$ at $x\in\Omega$ if there exist  $c,\;\rho \geq 0$
 such that
$$ f(y)-f(x)-\langle p, y-x \rangle \geq - c \vert y-x \vert^2,\quad \forall y \in B(x,\rho).$$
The set of all proximal subgradients of $f$ at $x$, denoted by
$\partial^{-,pr} f(x)$, is referred to as the \emph{proximal
subdifferential} of $f$ at $x$. Note that $\partial^{-,pr} f(x)$,
which may be empty, is a subset of $\partial^- f(x)$. Moreover, if
$f$ is of class $C^{1,1}$ in a neighborhood of $x$, then
$\partial^{-,pr} f(x)$ is the singleton $\lbrace \nabla f (x)
\rbrace$. If $f$ is Lipschitz, a vector $\zeta$ is a
\emph{reachable gradient} of $f$ at $x\in \Omega$ if there exists
a sequence $\lbrace x_j \rbrace \subset \Omega$ converging to $x$
such that $f$ is differentiable at $x_j$ for all $j\in
\mathbb{N}$, and $ \zeta = \lim_{ j \rightarrow \infty} \nabla
f(x_j).$ Let $\partial^* f(x)$ denote the set of all reachable
gradients of $f$ at $x$. The \emph{(Clarke) generalized gradient}
of $f$ at $x\in\Omega$, denoted by $\partial f(x)$, is the set
co$\, (\partial^* f(x))$. In fact, in the definition of $\partial
f(x)$, one can take $\lbrace x_j \rbrace \subset \Omega\setminus
\Omega_0$, for any set $\Omega_0$ of Lebesgue measure zero, cf.
\cite{MR709590}.

For a mapping $G : \mathbb{R}^n \times \mathbb{R}^m \rightarrow \mathbb{R}$, associating to each $x \in \mathbb{R}^n$ and $y\in \mathbb{R}^m$ a real number,
 we denote by $\nabla_x G$, $\nabla_y G$ the partial gradients (when they do exist); the partial generalized gradients will be
 denoted by $\partial_x G$, $\partial_y G$, and similarly for the partial Fr\'echet/proximal sub/superdifferentials. If $G$ is twice differentiable, then $\nabla_{xx}^2 G$, $\nabla^2 _{yy} G$, and $\nabla^2_{xy} G$ stand for its partial Hessians.\\
Finally, for an open set $\Omega\subset \mathbb{R}^n$, $f :
\Omega\rightarrow \mathbb{R} $ is \emph{semiconcave} if it is
continuous in $\Omega$ and there
 exists a constant $c$ such that
$ f(x + h) + f(x - h) - 2 f(x) \leq c | h |^2, $ for all $x, h\in
\mathbb{R}^n$ such that $[ x - h, x + h] \subset \Omega$. We say
that a function $f$ is semiconvex on $\Omega$ if and only if $-f$
is semiconcave on $\Omega$. We recall below some properties of
semiconcave functions (for further details see, for instance,
\cite{MR2041617}).
\begin{proposition}\label{aug16}
Let $\Omega\subset \mathbb{R}^n$ be  open, $f : \Omega \rightarrow
\mathbb{R}$ be a semiconcave function with semiconcavity constant
$c$, and let $x \in \Omega$. Then, $f$ is locally Lipschitz on
$\Omega$ and
\begin{enumerate}
\item $p\in \mathbb{R}^n$ belongs to $\partial^+ f(x)$ if and only
if, for any $y \in \Omega$ such that $[y, x] \subset \Omega$,
\begin{equation}\label{Booo}
f(y) - f(x) - \langle p, y - x \rangle  \leq c |y - x|^2.
\end{equation}
\item $\partial f(x) = \partial^{+} f(x)=co\ ( \partial^{\ast} f(x))$.
\item If $\partial^+ f(x)$ is a singleton, then $f$ is differentiable at $x$.
\end{enumerate}
\end{proposition}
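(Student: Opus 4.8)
The plan is to reduce everything to the classical theory of concave functions via the standard device of subtracting a quadratic. I would set $g(x):=f(x)-\tfrac{c}{2}|x|^2$. A direct computation based on the parallelogram identity $|x+h|^2+|x-h|^2=2|x|^2+2|h|^2$ rewrites the semiconcavity inequality as $g(x+h)+g(x-h)-2g(x)\le 0$, so $g$ is midpoint concave; together with the assumed continuity this makes $g$ concave on every convex subset of $\Omega$, in particular on every ball contained in $\Omega$. Since concave functions are locally Lipschitz on open sets and $\tfrac{c}{2}|\cdot|^2$ is smooth, $f=g+\tfrac{c}{2}|\cdot|^2$ is locally Lipschitz on $\Omega$, which settles the first assertion. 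The whole value of this reduction is that both the Fr\'echet superdifferential and the generalized gradient shift additively under adding a $C^1$ function: $\partial^+ f(x)=\partial^+ g(x)+cx$ and $\partial f(x)=\partial g(x)+cx$, so statements about $f$ follow from their concave counterparts for $g$.

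For item (1), the implication ``$\Leftarrow$'' is immediate, since dividing \eqref{Booo} by $|y-x|$ and letting $y\to x$ gives $\limsup\le 0$, i.e. $p\in\partial^+ f(x)$. For ``$\Rightarrow$'' I would pass to $g$: the statement $p\in\partial^+ f(x)$ is equivalent to $q:=p-cx\in\partial^+ g(x)$, and the crucial fact is that for a concave function the Fr\'echet superdifferential coincides with the global (concave-analytic) superdifferential, so $g(y)\le g(x)+\langle q,y-x\rangle$ whenever $[x,y]\subset\Omega$ (one restricts $g$ to the segment and invokes one-dimensional concavity). Substituting $g=f-\tfrac{c}{2}|\cdot|^2$ and simplifying with $|y|^2-|x|^2=|y-x|^2+2\langle x,y-x\rangle$ yields $f(y)-f(x)-\langle p,y-x\rangle\le\tfrac{c}{2}|y-x|^2$, which is even sharper than \eqref{Booo}.

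For item (2), the same reduction applies: for concave $g$ one has $\partial^+ g(x)=\partial g(x)=\mathrm{co}(\partial^* g(x))$, and translating by $cx$ gives $\partial^+ f(x)=\partial f(x)=\mathrm{co}(\partial^* f(x))$. Alternatively, and more self-containedly, I would prove $\partial^* f(x)\subseteq\partial^+ f(x)$ by applying \eqref{Booo} at differentiability points $x_j\to x$ and passing to the limit, using continuity of $f$ and the stability of the segment condition for open $\Omega$; convexity of $\partial^+ f(x)$ then forces $\mathrm{co}(\partial^* f(x))\subseteq\partial^+ f(x)$, while the reverse inclusion $\partial^+ f(x)\subseteq\partial f(x)$ is the standard containment of the Fr\'echet superdifferential in the generalized gradient for Lipschitz functions. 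For item (3), I would use that a concave function is differentiable at a point precisely when its superdifferential there is a singleton; since $\partial^+ f(x)$ is a singleton if and only if $\partial^+ g(x)$ is, and $g$ is differentiable at $x$ if and only if $f$ is, differentiability of $f$ at $x$ follows at once.

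The only genuinely non-routine ingredient is the coincidence, for concave functions, of the Fr\'echet superdifferential with the global concave superdifferential; this is exactly what upgrades the infinitesimal $\limsup$ condition defining $\partial^+$ into the global quadratic majorant \eqref{Booo}, and it is the one place where the concavity structure, rather than mere local Lipschitzness, is essential. I expect this to be the main step, with everything else amounting to bookkeeping around the smooth quadratic correction.
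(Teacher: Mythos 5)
Your proof is correct, and there is nothing in the paper to compare it against: the paper states Proposition~\ref{aug16} without proof, recalling it from the cited monograph \cite{MR2041617}. Your reduction to concavity via $g=f-\tfrac{c}{2}|\cdot|^2$ (parallelogram identity, one-dimensional concavity along segments to upgrade the Fr\'echet condition to the global inequality \eqref{Booo}, and the additive shift of $\partial^+$ and $\partial$ under smooth perturbations) is precisely the classical argument used in that reference, so your attempt matches the intended proof.
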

If $f$ is semiconvex, then (\ref{Booo}) holds reversing the inequality and the sign of the quadratic term, and the other two statements are true with the subdifferential instead of the superdifferential. \\
The result below follows from \cite[ Proposition 1.1.3]{MR2041617} and from Theorems $2.3 $ and $2.8$ in
\cite{MR1742044}.
\begin{theorem}\label{Alex}
Let  $\Omega  \subset \mathbb{R}^n$ be open and  $f : \Omega \to
\mathbb{R}$ be a differentiable semiconcave function with
semiconcavity constant $c$. Then, $f$ is twice Fr\'echet
differentiable a.e. in $\Omega$ and $\nabla^2f(x) \leq  c I$ for
a.e. $x \in \Omega$, in the sense of quadratic forms.
\end{theorem}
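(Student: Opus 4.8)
The plan is to reduce the statement to the classical Alexandrov theorem for concave functions via the standard device of subtracting a quadratic. Since the conclusion is local and is an almost-everywhere property, I would first fix an open ball $B=\mathring{B}(x_0,r)$ with $B\subset\Omega$ and work on the convex set $B$. There I introduce the auxiliary function $g:=f-\frac{c}{2}|\cdot|^2$. A direct computation using the parallelogram identity $|x+h|^2+|x-h|^2-2|x|^2=2|h|^2$ shows that the semiconcavity inequality $f(x+h)+f(x-h)-2f(x)\le c|h|^2$ is exactly equivalent to $g(x+h)+g(x-h)-2g(x)\le 0$; that is, $g$ is midpoint concave on $B$. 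Since $f$ is locally Lipschitz by Proposition \ref{aug16}, $g$ is continuous, and a continuous midpoint-concave function is concave. Thus $g$ is concave on $B$.

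Next I would invoke the Alexandrov theorem for concave (equivalently, convex) functions, which guarantees that $g$ is twice Fr\'echet differentiable at almost every point of $B$: at a.e. $x\in B$ there exist $\nabla g(x)$ and a symmetric matrix $A(x)$ with $g(y)=g(x)+\langle \nabla g(x),y-x\rangle+\frac12\langle A(x)(y-x),y-x\rangle+o(|y-x|^2)$. Adding back the smooth term $\frac{c}{2}|\cdot|^2$ immediately yields second-order differentiability of $f=g+\frac{c}{2}|\cdot|^2$ at the same points, with $\nabla^2 f(x)=A(x)+cI_n$. Covering $\Omega$ by countably many such balls and discarding the (measure-zero) union of the exceptional sets gives twice Fr\'echet differentiability of $f$ a.e. in $\Omega$.

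It remains to establish the quadratic-form bound. Fix a point $x$ at which the second-order expansion of $g$ holds and let $v\in\mathbb{R}^n$. Concavity of $g$ gives $g(x+tv)+g(x-tv)-2g(x)\le 0$ for all small $t$; substituting the expansion, the left-hand side equals $t^2\langle A(x)v,v\rangle+o(t^2)$, so dividing by $t^2$ and letting $t\to 0$ yields $\langle A(x)v,v\rangle\le 0$. Hence $A(x)\le 0$ in the sense of quadratic forms at a.e. $x$, and therefore $\nabla^2 f(x)=A(x)+cI_n\le cI_n$ at a.e. $x\in\Omega$, which is the desired estimate.

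The only genuinely hard ingredient is the Alexandrov theorem itself, invoked in the second paragraph; everything else is the elementary reduction to concavity and the bookkeeping of the quadratic correction. This is precisely why the statement is quoted as a consequence of the cited references rather than proved from scratch: Proposition $1.1.3$ of \cite{MR2041617} supplies the passage between semiconcave and concave functions, while Theorems $2.3$ and $2.8$ of \cite{MR1742044} supply the a.e. twice differentiability and the structure of the second-order expansion used to read off the sign of $A(x)$.
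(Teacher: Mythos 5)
Your proposal is correct and takes essentially the same route as the paper, which offers no self-contained argument but simply cites Proposition 1.1.3 of \cite{MR2041617} (the reduction from semiconcave to concave via subtracting $\frac{c}{2}|\cdot|^2$, exactly your parallelogram-identity computation) and Theorems 2.3 and 2.8 of \cite{MR1742044} (Alexandrov-type a.e.\ second-order differentiability)---precisely the two ingredients you assemble. The only point to watch is that the quadratic-expansion form of Alexandrov's theorem you quote is a priori weaker than twice Fr\'echet differentiability (which requires the gradient map itself to be differentiable at the point); for concave, everywhere differentiable functions the two notions coincide a.e., and since that equivalence is what the cited theorems of \cite{MR1742044} provide, your closing appeal to them is what makes the conclusion in the stated (Fr\'echet) form legitimate.
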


For a better justification of the main theorem in Section $4.1$,
we recall next a technical result.
\begin{lemma}\label{LemmaTacnico}
Let $G:\mathbb{R}^{k} \rightarrow \mathbb{R}^{k}$, $G\in
C^1(\Omega)$ for some open set $\Omega\subset\mathbb{R}^k$. Denote
by $y(\cdot;y_0)$ the solution to
\begin{equation}\label{CauchyLemma}
\left\lbrace
\begin{array}{l}
\dot{y}(t)= G(y(t)) \mbox{ on } [0,T],\\
y(0)= y_0,
\end{array}
\right.
\end{equation}
and assume that for some $\overline{y}_0\in\mathbb{R}^k$,
$\overline{y}(\cdot):=y(\cdot;\overline{y}_0)$ is defined on
$[0,T]$ and takes values in $\Omega$. Consider the linear system:
\begin{equation}\label{VarLemma}
\left\lbrace
\begin{array}{l}
\dot{\psi}(t)= D G(\overline{y}(t))\psi(t) \mbox{ on } [0,T] ,\\
\psi(0)=  I_k  ,
\end{array}
\right.
\end{equation}
and denote its solution by $\psi,\; \psi:[0,T] \rightarrow M(k)$.
Then, for all $y_0$ in a neighborhood of $\overline{y}_0$, we have
\begin{equation}\label{FromVatEquation}
y(t;y_0)=\overline{y}(t)+ \psi(t)(y_0 - \overline{y}_0)+ o_t(\mid
y_0 - \overline{y}_0 \mid) \quad \mbox{ on } [0,T]
\end{equation}
where
$$
\lim_{y \rightarrow \overline{y}_0} \frac{o_t(\mid y-\overline{y}_0 \mid ) }{|y-\overline{y}_0| }=0,
$$
uniformly in $t$. That is, $\psi$ is the derivative of the map
$y_0 \mapsto y(\cdot;y_0)\in C([0,T];\mathbb{R}^n) $ evaluated at
$\overline{y}_0$.
\end{lemma}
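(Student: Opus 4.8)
The statement is the classical theorem on the $C^1$ dependence of the flow of an ODE on its initial datum, so the plan is to follow the standard route: first secure existence on the whole interval together with continuous dependence, then identify the derivative through the variational equation \eqref{VarLemma} and estimate the remainder by Gronwall's inequality.

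First I would fix a compact neighborhood $K \subset \Omega$ of the compact curve $\overline{y}([0,T])$. Since $G \in C^1(\Omega)$, both $G$ and $DG$ are bounded on $K$, $G$ is Lipschitz there with some constant $L$, and $DG$ is uniformly continuous on $K$. For the variational equation, note that $A(t) := DG(\overline{y}(t))$ is continuous on $[0,T]$, so the linear system \eqref{VarLemma} admits a unique solution $\psi$ on all of $[0,T]$, and $\|\psi\|_{\infty} < \infty$. To guarantee that $y(\cdot; y_0)$ is defined on $[0,T]$ for $y_0$ close to $\overline{y}_0$, set $z(t) := y(t;y_0) - \overline{y}(t)$; as long as both trajectories stay in $K$ we have $|\dot z(t)| = |G(y(t;y_0)) - G(\overline{y}(t))| \leq L |z(t)|$, so Gronwall's lemma yields $|z(t)| \leq |y_0 - \overline{y}_0|\, e^{Lt}$. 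A standard continuation argument then shows that, for $|y_0 - \overline{y}_0|$ small enough, the solution does not leave $K$ and hence exists on the full interval $[0,T]$, with $z \to 0$ uniformly as $y_0 \to \overline{y}_0$.

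Then I would turn to differentiability. Define the remainder $r(t) := y(t;y_0) - \overline{y}(t) - \psi(t)(y_0 - \overline{y}_0) = z(t) - \psi(t)(y_0 - \overline{y}_0)$, so that $r(0) = 0$, and the goal is $\|r\|_{\infty} = o(|y_0 - \overline{y}_0|)$. Writing $G(y(t;y_0)) - G(\overline{y}(t)) = M(t)\, z(t)$ with $M(t) := \int_0^1 DG(\overline{y}(t) + s\, z(t))\, ds$, and using $z = r + \psi(\cdot)(y_0 - \overline{y}_0)$, a direct computation gives $\dot r(t) = M(t)\, r(t) + (M(t) - A(t))\, \psi(t)(y_0 - \overline{y}_0)$. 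Since $\|M(t)\| \leq L$ and, by uniform continuity of $DG$ on $K$ together with $\|z\|_{\infty} \to 0$, the quantity $\eta(y_0) := \sup_{t \in [0,T]} \|M(t) - A(t)\|$ tends to $0$ as $y_0 \to \overline{y}_0$, we obtain $|\dot r(t)| \leq L |r(t)| + \eta(y_0)\, \|\psi\|_{\infty}\, |y_0 - \overline{y}_0|$. Applying Gronwall's inequality with $r(0) = 0$ gives $\|r\|_{\infty} \leq C\, \eta(y_0)\, |y_0 - \overline{y}_0|$ for a constant $C = C(L, T, \|\psi\|_{\infty})$, which is precisely the desired $o_t(|y_0 - \overline{y}_0|)$, uniformly in $t$.

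The routine parts are the Gronwall estimates and the mean-value representation of $M(t)$. The main point requiring care is the global-in-time control: ensuring that for all $y_0$ in a fixed neighborhood of $\overline{y}_0$ the perturbed solution exists on the entire interval $[0,T]$ and remains inside the compact set $K$, where the uniform Lipschitz bound $L$ and the uniform continuity of $DG$ are available. This uniformity is exactly what keeps both $L$ and the modulus $\eta(y_0)$ independent of $t$, and is the ingredient that upgrades the pointwise estimates to the uniform $o_t$ claim.
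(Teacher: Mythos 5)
Your proof is correct. Note that the paper itself offers no proof of this lemma: it is stated as a recalled classical result (smooth dependence of ODE solutions on the initial datum, i.e.\ that the flow map is differentiable with derivative given by the variational equation), so there is no argument in the paper to compare against. What you wrote is the standard textbook proof of exactly that fact, and all the steps hold up: the compact tube $K$ around $\overline{y}([0,T])$ gives a uniform Lipschitz constant for $G$ and uniform continuity of $DG$; Gronwall plus a continuation argument yields existence of $y(\cdot;y_0)$ on all of $[0,T]$ with $\|z\|_\infty \leq e^{LT}|y_0-\overline{y}_0|$; the integral mean-value representation $G(y)-G(\overline{y})=M(t)z(t)$ is legitimate because, with $K$ chosen as a closed $\varepsilon$-tube, the segment $[\overline{y}(t),y(t;y_0)]$ stays in $K$; and the final Gronwall estimate on $r$ delivers the remainder bound $\|r\|_\infty \leq C\,\eta(y_0)\,|y_0-\overline{y}_0|$, which is the required $o_t(|y_0-\overline{y}_0|)$ uniformly in $t$. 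The one point you could make fully explicit is the choice of $K$ as a tube (so that the segments used in defining $M(t)$ lie in $K$), but this is implicit in your setup and is not a gap.
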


\subsection{Second-order superjets and subjets}
We start by recalling the definition of the second-order superjets and subjets. For a comprehensive treatment and references to the literature on
 this subject we refer to \cite{Crandall,MR1118699}.
\begin{definition}
Let $f:\mathbb{R}^n \rightarrow [-\infty,+\infty]$ be an extended
real-valued function and let $x\in \textit{dom} (f)$. A pair
$(q,Q)\in \mathbb{R}^n\times S(n)$ is said to be a \emph{superjet}
of $f$ at $x$ if or some $\delta>0$ and for all $y\in
B(x,\delta)$,
\begin{equation}\label{SuperJet}
f(y)\leq f(x) + \langle q, y-x\rangle + \frac{1}{2} \langle
Q(y-x),y-x\rangle + o(\mid y-x \mid^2 ).
\end{equation}
The set of all the superjets of $f$ at $x$ is denoted by $J^{2,+}
f(x)$. Similarly, a pair $(q,Q)\in \mathbb{R}^n\times S(n)$ is
called a \emph{subjet} of $f$ at $x$ if there exists $\delta>0$
such that, for all $y\in B(x,\delta)$,
\begin{equation}\label{SubJet}
f(y)\geq f(x) + \langle q, y-x\rangle + \frac{1}{2} \langle
Q(y-x),y-x\rangle + o(\mid y-x \mid^2 ).
\end{equation}
The set of all the subjets of $f$ at $x$ is denoted by $J^{2,-}
f(x)$.
\end{definition}
Equivalently, one can define the set of all the subjets of
$f$ at $x$ as $ J^{2,-}f(x):= -J^{2,+}(-f(x)). $
\begin{remark}\label{aug16a}
By Proposition \ref{aug16}, if $f$ is semiconcave on a
neighborhood of $x$ with a semiconcavity constant $c$, then for
any $p \in \partial^+f(x)$  we have $(p,cI) \in J^{2,+} f(x).$
\end{remark}

From a geometrical point of view, the existence of a superjet
$(q,Q)\in J^{2,+} f(x)$ corresponds to the possibility of
``approximating'' $f$ from above by a $C^{2}$ function with the
gradient and Hessian given by $q$ and $Q$, respectively. This
leads to an equivalent definition of superjet in terms of the
so-called \emph{smooth test functions}.
\begin{proposition}\label{Equiv}
Let $f:\mathbb{R}^n \rightarrow [-\infty,+\infty]$ be an extended
real-valued function and let $x\in \textit{dom} (f)$. The
following statements are equivalent:
\begin{enumerate}
\item[(i)] $(q,Q)\in J^{2,+} f(x) $, \item[(ii)] there exists
$\phi\in C^2(\mathbb{R}^n;\mathbb{R})$ such that $f \leq \phi$,
$f(x)=\phi(x)$, and $(\nabla \phi(x), \nabla^2 \phi (x))=(q,Q)$.
\end{enumerate}
\end{proposition}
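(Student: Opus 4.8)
The plan is to establish the two implications separately, the implication (ii)$\Rightarrow$(i) being elementary and (i)$\Rightarrow$(ii) carrying all the work. For (ii)$\Rightarrow$(i) I would simply invoke the second-order Taylor expansion of $\phi\in C^{2}(\mathbb{R}^{n};\mathbb{R})$ at the point $x$,
\[
\phi(y)=\phi(x)+\langle\nabla\phi(x),y-x\rangle+\tfrac12\langle\nabla^{2}\phi(x)(y-x),y-x\rangle+o(|y-x|^{2}),
\]
and combine it with the hypotheses $f\le\phi$, $f(x)=\phi(x)$ and $(\nabla\phi(x),\nabla^{2}\phi(x))=(q,Q)$; this yields \eqref{SuperJet} at once, so $(q,Q)\in J^{2,+}f(x)$.

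For the converse (i)$\Rightarrow$(ii), the task is to manufacture a genuine $C^{2}$ test function out of the purely asymptotic inequality \eqref{SuperJet}. After translating to $x=0$ and subtracting the candidate quadratic, I would introduce the remainder
\[
g(z):=f(x+z)-f(x)-\langle q,z\rangle-\tfrac12\langle Qz,z\rangle,
\]
which satisfies $g(0)=0$ and, by \eqref{SuperJet}, $\limsup_{z\to0}g(z)/|z|^{2}\le0$. It then suffices to build $\Phi\in C^{2}(\mathbb{R}^{n};\mathbb{R})$ with $\Phi(0)=0$, $\nabla\Phi(0)=0$, $\nabla^{2}\Phi(0)=0$ and $g\le\Phi$ near the origin, since then
\[
\phi(y):=f(x)+\langle q,y-x\rangle+\tfrac12\langle Q(y-x),y-x\rangle+\Phi(y-x)
\]
has gradient $q$ and Hessian $Q$ at $x$, touches $f$ from above at $x$, and dominates $f$ in a neighbourhood of $x$. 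To produce $\Phi$ I would pass to the nondecreasing upper modulus $\theta(r):=\sup_{|z|\le r}g(z)^{+}$ and observe that the pointwise bound $\limsup_{z\to0}g(z)/|z|^{2}\le0$ upgrades to the uniform statement $\theta(r)=o(r^{2})$ as $r\downarrow0$, while by construction $g(z)\le\theta(|z|)$.

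The hard part is the regularity of $\Phi$ at the base point, since a radial profile $h(|z|)$ is generally not twice differentiable at the origin because of the non-smoothness of $z\mapsto|z|$. I would circumvent this by writing $\Phi(z)=P(|z|^{2})$ for a one-variable function $P$, so that $\Phi$ is automatically $C^{2}$ on $\mathbb{R}^{n}$ as soon as $P\in C^{2}([0,\infty))$, with $\nabla\Phi(0)=0$ and $\nabla^{2}\Phi(0)=2P'(0)I_{n}$; it thus remains to find a nonnegative $P\in C^{2}([0,\infty))$ with $P(0)=P'(0)=0$ and $P(u)\ge\theta(\sqrt{u})$ near $u=0$. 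Since $\theta(r)=o(r^{2})$ translates into $\theta(\sqrt{u})=o(u)$, such a $P$ can be obtained by smoothing the one-dimensional modulus $u\mapsto\theta(\sqrt{u})$ — for instance by first replacing it by a continuous nondecreasing $o(u)$ majorant and then dominating the latter by a twice-integrated (logarithmic-average) profile — and the $o(u)$ bound is precisely what forces $P(0)$, $P'(0)$ and $P''(0)$ to vanish. Verifying this one-sided $C^{2}$ behaviour at $u=0$, rather than merely away from it, is the only genuinely delicate point. Finally, to upgrade the local inequality $f\le\phi$ to the global one required in (ii), I would glue the local test function to a fixed global $C^{2}$ majorant of $f$ through a smooth cutoff supported near $x$, taking the resulting convex combination; since all modifications take place away from $x$, the $2$-jet of $\phi$ at $x$ is unchanged. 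This last step uses the existence of a global $C^{2}$ majorant of $f$, which holds in every situation of interest here (e.g.\ for the finite, continuous value function $V$), and is otherwise equivalent to the standard local-maximum formulation of (ii).
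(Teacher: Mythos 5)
Your direction (ii)$\Rightarrow$(i) and your reduction of (i)$\Rightarrow$(ii) --- subtract the candidate quadratic, pass to the uniform modulus $\theta(r)=\sup_{|z|\le r}g(z)^{+}=o(r^{2})$, and seek a $C^{2}$ majorant of the remainder $g$ with vanishing $2$-jet at the origin --- are correct and parallel the paper's proof (the paper's modulus $g(r)$ is exactly your $\theta(r)/r^{2}$). The gap is in the construction of that majorant: the ansatz $\Phi(z)=P(|z|^{2})$ with $P\in C^{2}([0,\infty))$, $P(0)=P'(0)=0$, can never work. Since $P''$ is continuous up to $u=0$, it is bounded near $0$, and Taylor's formula gives
\[
P(u)=\int_{0}^{u}(u-s)P''(s)\,ds\le \frac{u^{2}}{2}\,\sup_{[0,u]}|P''|=O(u^{2}),
\]
so $\Phi(z)=O(|z|^{4})$; but you need $P(u)\ge\theta(\sqrt{u})$, and an $o(u)$ quantity need not be $O(u^{2})$. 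Concretely, take $f(y)=|y-x|^{2}/\log(1/|y-x|)$ for small $|y-x|\neq 0$ and $f(x)=0$: then $(0,0)\in J^{2,+}f(x)$, $\theta(r)=r^{2}/\log(1/r)$, hence $\theta(\sqrt{u})=2u/\log(1/u)$, and the two requirements $P(u)\ge 2u/\log(1/u)$ and $P(u)\le Mu^{2}$ would force $Mu\log(1/u)\ge 2$ for all small $u$, which is false. No smoothing or iterated averaging can repair this, because the obstruction is the a priori bound $P=O(u^{2})$, not a lack of regularity of $\theta$; this is also why your parenthetical claim that the $o(u)$ bound ``forces'' $P''(0)=0$ is backwards --- the constraint runs against you, not for you.

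The non-differentiability of $z\mapsto|z|$ at the origin, which your substitution $|z|^{2}$ was designed to avoid, is in fact harmless here, and exploiting this is precisely the paper's route. If $\beta\in C^{2}([0,\infty))$ satisfies $\beta(0)=\beta'(0)=\beta''(0)=0$, then $\Phi(z):=\beta(|z|)$ is $C^{2}$ on $\mathbb{R}^{n}$ with vanishing gradient and Hessian at $0$: for $z\ne 0$,
\[
\nabla^{2}\Phi(z)=\beta''(|z|)\,\frac{zz^{*}}{|z|^{2}}+\frac{\beta'(|z|)}{|z|}\Bigl(I_{n}-\frac{zz^{*}}{|z|^{2}}\Bigr),
\]
and both coefficients tend to $0$ as $z\to0$ because $\beta''(0)=0$ and $\beta'(r)/r\to\beta''(0)=0$. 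Crucially, such a $\beta$ is only forced to be $o(r^{2})$ --- the correct scale for dominating $\theta$ --- and not $O(r^{4})$. The paper builds it by taking (via Lemma 3.1.8 in \cite{MR2041617}, applied to the upper semicontinuous envelope of $\theta(r)/r^{2}$) a continuous nondecreasing $w$ with $w(r)\to0$ as $r\to0^{+}$, $w(r)\ge\theta(r)/r^{2}$, such that $\gamma(r):=rw(r)$ is $C^{1}$ with $\gamma'(0)=0$, and then setting $\beta(r):=\int_{r}^{2r}\gamma(s)\,ds$; then $\beta\in C^{2}$, $\beta(0)=\beta'(0)=\beta''(0)=0$, and $\beta(r)\ge r\gamma(r)\ge\theta(r)$ by monotonicity of $\gamma$. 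Substituting this $\beta(|z|)$ for your $P(|z|^{2})$ repairs your argument; your concluding cutoff-gluing for globality is then fine, modulo the caveat (made by the paper in the remark following the proposition) that for genuinely extended real-valued $f$ one should state (ii) with $\phi$ of class $C^{2}$ only in a neighborhood of $x$.
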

We give the proofs of this and of the next result below for the
readers convenience.
\begin{proof}
The nontrivial point of the conclusion is the existence of a
smooth function touching $f$ from above at $x$ with the gradient
and Hessian given by a fixed element in $J^{2,+}f(x)$. So, let
$(q,Q)\in  J^{2,+} f(x)$. Set $g(0)=0$, and for every $r>0$ define
\[
g(r):=\frac{1}{r^2} \sup_{y\in B(x,r)} \left( f(y)-f(x)-\langle
q,y-x \rangle -\frac{1}{2} \langle Q (y-x),y-x\rangle \right)^+ .
\]
By \eqref{SuperJet}, $g(r)$ tends to zero as $r\rightarrow 0^+$.
Let $\tilde{g}$ denote the upper envelope of $g$, defined
$\forall~r\geq 0$ as
\[ \tilde{g}(r)= \inf_{\delta > 0} \sup_{y\in [(r-\delta)^+, r+\delta]} g(y). \]
Then, $\tilde{g}$ is upper semicontinuous,
$\tilde{g}(r)\rightarrow 0$ when $r\rightarrow 0^+$, and
$\tilde{g}\geq g$. By a well-known result (see, e.g., Lemma
$3.1.8$ in \cite{MR2041617}), there exists a continuous
nondecreasing function $w:[0,+ \infty)\rightarrow [0,+\infty)$
such that $w(r)\rightarrow 0$ as $r\rightarrow 0^+$,
$\tilde{g}(r)\leq w(r)$ for any $r\geq 0$, and the function
$\gamma (r):=r w(r)$ is $C^1$ on $[0,+\infty)$, and satisfies
$\gamma'(0)=0$. Define for all $r\geq 0$,  $\beta(r)=\int_{r}^{2
r} \gamma(s)~ d s .$ From the relations  $
\beta(0)=\beta'(0)=\beta''(0)=0$ and $ \beta( r)\geq  r \gamma (r)
\geq r^2 \tilde{g}(r), $ we deduce that the function
\[
\phi(y):= f(x)+\langle q,y-x\rangle+\frac{1}{2}\langle
Q(y-x),y-x\rangle+ \beta(\mid y-x \mid),\quad y\in\mathbb{R}^n,
\]
belongs to $C^2(\mathbb{R}^n;\mathbb{R})$, and moreover $(\nabla
\phi(x), \nabla ^2\phi(x)) =( p, Q),\; \phi(x)=f(x)$, and
\[
\phi (y)-f(y)\geq \beta (\mid y-x \mid )-g(\mid y-x\mid )\mid y-x
\mid^2 \geq 0.
\]
This completes the proof.
\end{proof}
It is also clear that, in $(ii)$ above, one can replace the condition ``$\phi$ of class $C^2(\mathbb{R}^n;\mathbb{R})$'' by the
 condition ``$\phi$ of class $C^2$ in a neighborhood of $x$''. \begin{proposition}\label{Chiusura}
Let $f:\mathbb{R}^n \rightarrow [-\infty,+\infty]$ be an extended
real-valued function and let $x\in \textit{dom} (f)$. Then the
following properties hold:
\begin{itemize}
\item[(i)] $J^{2,+} f(x)$ is a convex subset of $\mathbb{R}^n
\times S(n)$, \item[(ii)] for any $q\in\mathbb{R}^n$, the set
$\lbrace Q\in S(n):~ (q,Q)\in J^{2,+} f(x)\rbrace$ is a closed convex
 subset of $S(n)$, \item[(iii)] if $f\leq g$ and
$f(\widehat{x})=g(\widehat{x})$ for some
$\widehat{x}\in\mathbb{R}^n$, then $J^{2,+} g(\widehat{x} )\subset
J^{2,+} f(\widehat{x})$. \item[(iv)] if $(q,Q)\in J^{2,+}f(x)$,
then $(q,Q')\in J^{2,+}f(x)$ for all $Q'\in S(n)$ such that
$Q'\geq Q$. Thus, the set $J^{2,+}f(x)$ is either empty or
unbounded.
\end{itemize}
\end{proposition}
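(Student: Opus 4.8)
The plan is to work directly from the definition of superjet, recasting the defining inequality as a limit superior condition that makes passage to limits transparent. For $(q,Q)\in\mathbb{R}^n\times S(n)$ introduce the remainder
\[
\Psi_{q,Q}(y):=f(y)-f(x)-\langle q,y-x\rangle-\tfrac{1}{2}\langle Q(y-x),y-x\rangle,
\]
and observe that, by the very definition \eqref{SuperJet}, one has $(q,Q)\in J^{2,+}f(x)$ if and only if $\limsup_{y\to x}\Psi_{q,Q}(y)/|y-x|^2\le 0$. With this reformulation, properties (iii) and (iv) are immediate. For (iii), the hypotheses $f\le g$ and $f(\widehat{x})=g(\widehat{x})$ give $\Psi^{f}_{q,Q}(y)\le\Psi^{g}_{q,Q}(y)$ near $\widehat{x}$ (where the superscripts indicate the function used in the remainder), so every superjet of $g$ at $\widehat{x}$ is a superjet of $f$ at $\widehat{x}$. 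For (iv), if $Q'\ge Q$ then $\langle Q'(y-x),y-x\rangle\ge\langle Q(y-x),y-x\rangle$, whence $\Psi_{q,Q'}\le\Psi_{q,Q}$ and the limsup condition is preserved, giving $(q,Q')\in J^{2,+}f(x)$. The ``empty or unbounded'' dichotomy then follows by taking $Q'=Q+kI_n$ for $k>0$: whenever $(q,Q)$ is a superjet so is $(q,Q+kI_n)$, and $\|Q+kI_n\|\to\infty$ as $k\to\infty$.

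For the convexity in (i), I would take $(q_1,Q_1),(q_2,Q_2)\in J^{2,+}f(x)$ and $\lambda\in[0,1]$, and use the identity $\Psi_{\lambda q_1+(1-\lambda)q_2,\,\lambda Q_1+(1-\lambda)Q_2}=\lambda\Psi_{q_1,Q_1}+(1-\lambda)\Psi_{q_2,Q_2}$ together with subadditivity of $\limsup$ to conclude that the convex combination of the two superjets again satisfies the defining condition. Alternatively, Proposition~\ref{Equiv} renders this even more transparent, since a convex combination $\lambda\phi_1+(1-\lambda)\phi_2$ of $C^2$ test functions touching $f$ from above at $x$ is itself a $C^2$ function touching $f$ from above at $x$, with gradient and Hessian the corresponding convex combination. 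The convexity half of (ii) is then just the special case of (i) in which the first coordinate is held fixed at $q$.

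The main obstacle is the closedness assertion in (ii): each superjet $(q,Q_k)$ carries its own neighborhood and its own $o(|y-x|^2)$ remainder, and these need not behave uniformly in $k$, so one cannot simply pass to the limit in \eqref{SuperJet}. Here the limsup reformulation is decisive. Given $Q_k\to Q$ with $(q,Q_k)\in J^{2,+}f(x)$, I would write
\[
\Psi_{q,Q}(y)=\Psi_{q,Q_k}(y)+\tfrac{1}{2}\langle (Q_k-Q)(y-x),y-x\rangle,
\]
and exploit the uniform bound $|\langle (Q_k-Q)(y-x),y-x\rangle|\le\|Q_k-Q\|\,|y-x|^2$, which rests on the identity $\|Q\|=\sup_{x\in S^{n-1}}|\langle Qx,x\rangle|$ for symmetric matrices recorded in Section~$2.1$, to obtain
\[
\limsup_{y\to x}\frac{\Psi_{q,Q}(y)}{|y-x|^2}\le\limsup_{y\to x}\frac{\Psi_{q,Q_k}(y)}{|y-x|^2}+\tfrac{1}{2}\|Q_k-Q\|\le\tfrac{1}{2}\|Q_k-Q\|.
\]
Letting $k\to\infty$ forces the left-hand side to be $\le 0$, so $(q,Q)\in J^{2,+}f(x)$, which is precisely the closedness of the fiber over $q$. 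This is the one place where tracking the individual neighborhoods $\delta_k$ would break down, and where the quadratic-form estimate for symmetric matrices does the essential work.
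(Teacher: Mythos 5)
Your proof is correct and takes essentially the same approach as the paper: for the closedness in (ii) --- the only part the paper proves in detail --- you use the same decomposition $\Psi_{q,Q}(y)=\Psi_{q,Q_k}(y)+\tfrac12\langle (Q_k-Q)(y-x),y-x\rangle$ and the same operator-norm bound on the quadratic correction, merely phrased through the $\limsup$ characterization rather than the paper's explicit $\epsilon$--$\delta$ argument. The remaining parts (i), (iii), (iv), which the paper dismisses as ``just as easy to prove,'' you verify correctly by the same elementary means.
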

\begin{proof} Below we prove only (ii). The other points are just as easy to prove. Note that convexity follows from (i).
 Suppose that the set $J^{2,+} f(x)$ is nonempty, and that for a fixed $q\in\mathbb{R}^n$ and $X_i\in S(n)$, $i=1,2,...$ satisfying
  $(q,X_i)\in J^{2,+} f(x)$ we have $X_i\rightarrow X$ as $i \rightarrow \infty$ for some $X\in S(n)$. Then, for any $\epsilon>0$ there exists $\delta_i$ such that for all $y\in B(x,\delta_i)$,
\[
f(y)\leq f(x) + \langle q, y-x\rangle + \frac{1}{2} \langle X_i
(y-x),y-x\rangle + \frac{ \epsilon}{2} \mid y-x \mid^2 .
\]
Now, choose $N$ such that $\parallel X_N -X\parallel \leq \epsilon$
and set $\delta:=\delta_N$. Then, for all $y\in B(x,\delta)$,
\[
f(y)- f(x) - \langle q, y-x\rangle - \frac{1}{2} \langle X_N
(y-x),y-x\rangle - \frac{1}{2} \langle (X- X_N) (y-x),y-x\rangle
\]
\[
\leq \frac{\epsilon}{2} \mid y-x\mid^2 +\frac{1}{2} \parallel X-
X_N \parallel \mid y-x\mid^2  \leq  \epsilon \mid y-x\mid^2.
\]
Thus, $(q,X)\in J^{2,+} f(x)$ and claim (ii) follows.
\end{proof}
\begin{remark}
The set $J^{2,+} f(x)$ is not necessarily closed. To see that,
consider the function $f : \mathbb{R}\rightarrow \mathbb{R}$
defined by $f(x)=0$ for $x\leq 0$ and $f(x)=-x$ for $x\geq 0$.
Then
$$J^{2,+} f(0)= \bigl( (-1,0)\times \mathbb{R}\bigr) \cup\bigl( \lbrace 0,-1\rbrace \times [0, + \infty)\bigr). $$
\end{remark}
One can get analogues of Propositions \ref{Equiv} and
\ref{Chiusura} with $J^{2,+}f(x)$ replaced by $J^{2,-}f(x)$.

Further remarks are in order. The sets of superjets or subjets
may be empty. However, if
$J^{2,+} f(x)$ and $J^{2,-} f(x)$ are both nonempty, then $f$ is
differentiable at $x$ and, for any $(q_1,Q_1)\in J^{2,+} f(x)$ and
$(q_2,Q_2)\in J^{2,-} f(x)$, we have that $q_1=q_2=\nabla f(x)$ and
$Q_1\geq Q_2$.

\subsection{Differential inclusions and main assumptions}
Here we introduce our main assumptions. We refer the reader to \cite{MR755330} and \cite{MR709590} for basic results on
differential inclusions, and to \cite{MR2728465} and  \cite{Nostro} for
a discussion of assumption $(H)$ below.

Consider a multifunction $F$ mapping $\mathbb{R}^n$ to the subsets of $\mathbb{R}^n$. Throughout this paper, we shall assume that $F$ satisfies the following collection of classical assumptions, which allows us to use the well-developed theory of differential inclusions:
$$\mbox{ \textbf{(SH)} }
\left\{\begin{array}{ll}
(i) & F(x) \mbox{ is nonempty, convex, compact for each } x \in \mathbb{R}^n,\\
(ii) & F \mbox{ is locally Lipschitz},\\
(iii)& \exists~ \gamma>0 \mbox{ so that } \max \lbrace \vert v
\vert : v\in F(x)\rbrace \leq \gamma (1+\vert x \vert)~ \forall x
\in \mathbb{R}^n.
\end{array}\right.$$
Recall that a multifunction $F:\mathbb{R}^n \rightrightarrows
\mathbb{R}^n$ with nonempty  compact values is \emph{locally
Lipschitz} if for each $x\in \mathbb{R}^n$ there exists a
neighborhood $K$ of $x$ and a constant $c>0$, which may depend on
$K$, such that $F(z)\subset F(y)+c \mid z-y \mid B$
for all $z, y \in K$.

We usually refer to the Mayer problem \eqref{Mayer}-\eqref{May2} as $\mathcal{P}(t_0,x_0)$. Under assumption $(SH)$, if $\phi$ is lower semicontinuous, then $\mathcal{P}(t_0,x_0)$ has at least one \emph{optimal solution},
 that is, a trajectory $\overline{x}(\cdot)\in W^{1,1}\left([t_0,T]; \mathbb{R}^n\right)$ of (\ref{May1})-(\ref{May2}) such that $\phi(\overline{x}(T))\leq \phi (x(T))$,
for any trajectory $x(\cdot)\in W^{1,1}\left([t_0,T]; \mathbb{R}^n\right)$ of (\ref{May1})-(\ref{May2}).

In this paper we impose, in addition to $(SH)$, the following assumptions on $F$ involving the Hamiltonian associated with $F$, that is, the function $H:\mathbb{R}^n\times \mathbb{R}^n \rightarrow \mathbb{R}$ defined by \eqref{intro:H}:
$$\mbox{ \textbf{(H)} }
\left\{
\begin{array}{ll}
&\mbox{for  every } r>0\\
&(i)\; \exists\ c \geq 0 \mbox{ so that },\; \forall p\in S^{n-1},\;  x \mapsto H(\cdot,p)\mbox{ is semiconvex on } B(0,r) \mbox{ with constant }  c, \\
&(ii)\; \nabla_p H(x,p) \mbox{ exists and is  Lipschitz in } x \mbox{ on } B(0,r), \mbox{ uniformly for }p \in \mathbb{R}^n \smallsetminus
\lbrace 0\rbrace.
\end{array}\right.$$
Lipschitz multifunctions with closed convex values always admit
parameterizations by Lipschitz functions (see e.g.
\cite{MR1048347}), but in general a smooth parameterizations does
not exist. We point out that, in \cite{MR2728465}, the authors
have provided a method to generate many examples of multifunctions
satisfying $(SH)$ and $(H)$ without, in general, admitting a
parameterizations being $C^1$ in $x$. Let us now quickly recall
some consequences of $(H)$. The semiconvexity of the map $x\mapsto
H(x,p)$ on $B(0,r)$ is equivalent to the mid-point property of the
multifunction $F$ on $B(0,r)$, that is,
\[
 2 F(x)\subset F(x+z)+F(x-z)+ c \mid z \mid^2 B
\]
for all $x, z$ such that $x\pm z\in B(0,r)$. Moreover, we have the following ``splitting'' formulas for $\partial H$.
\begin{lemma}\label{le:splitting}
Let $H$ be as in \eqref{intro:H}.
\begin{enumerate}
\item[(i)]  If  $H$ is locally Lipschitz and  $ H(\cdot,p)$ is
locally semiconvex, uniformly for $p \in S^{n-1}$, then
$$\partial H(x,p) \subset \partial_x^- H(x,p)\times \partial_p^- H(x,p), \quad \forall (x,p)\in\mathbb{R}^n\times\mathbb{R}^n.$$
\item[(ii)] Under assumptions (SH) and (H), we have that
\begin{equation}\label{eq:splitting}
\partial H(x,p) = \partial_x^- H(x,p)\times \partial_p^- H(x,p), \quad \forall (x,p)\in\mathbb{R}^n\times\mathbb{R}^n.
\end{equation}
\end{enumerate}
\end{lemma}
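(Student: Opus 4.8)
The plan is to establish the containment (i) from general properties of generalized gradients together with the one-sided regularity of $H$ in each variable, and then to upgrade it to the equality (ii) by a joint-differentiability argument based on the convexity and positive $1$-homogeneity of $p\mapsto H(x,p)$. For (i), I would first observe that $H(x,\cdot)$, being a supremum of linear functions, is convex, so its Clarke gradient in $p$ equals the convex subdifferential, which in turn equals $\partial_p^- H(x,p)$; and that $H(\cdot,p)$ is semiconvex, so by the subdifferential version of Proposition~\ref{aug16}(2) its Clarke gradient in $x$ equals $\partial_x^- H(x,p)$. Then the classical estimate that the projections of the generalized gradient of a jointly Lipschitz function onto the two factors are contained in the partial generalized gradients (see \cite{MR709590}) gives $\partial H(x,p)\subset \partial_x^- H(x,p)\times\partial_p^- H(x,p)$, which is (i).

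The heart of (ii) is the following differentiability fact, which I would isolate as a lemma: if $p\neq 0$ and $H(\cdot,p)$ is differentiable at $x_0$, then $H$ is jointly differentiable at $(x_0,p)$ and $\nabla H(x_0,p)=(\nabla_x H(x_0,p),\nabla_p H(x_0,p))$. To prove it I would use that, by $(H)(ii)$ together with the differentiability of the convex map $H(x,\cdot)$ away from the origin, $\nabla_p H$ is jointly continuous on a neighbourhood of $(x_0,p)$ (Lipschitz in $x$ uniformly in $p$, and continuous in $p$ as the gradient of a convex function on the open set $\mathbb{R}^n\smallsetminus\{0\}$). Splitting the increment as $H(x_0+h,p+k)-H(x_0,p)=[H(x_0+h,p+k)-H(x_0+h,p)]+[H(x_0+h,p)-H(x_0,p)]$, the first bracket is handled by the fundamental theorem of calculus along $[p,p+k]$ and the continuity of $\nabla_p H$, the second by the differentiability of $H(\cdot,p)$ at $x_0$. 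I expect this to be the main obstacle, as it is precisely what converts one-variable gradient information into the joint reachable gradients needed below.

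With the lemma in hand I would prove the reverse inclusion. For $p\neq 0$ one has $\partial_p^- H(x,p)=\{\nabla_p H(x,p)\}$. Given any reachable gradient $\xi=\lim_k\nabla_x H(x_k,p)$ with $x_k\to x$ and $H(\cdot,p)$ differentiable at $x_k$, the lemma makes $H$ jointly differentiable at $(x_k,p)$; since $\nabla_p H$ is continuous, $\nabla H(x_k,p)\to(\xi,\nabla_p H(x,p))$, so $(\xi,\nabla_p H(x,p))\in\partial^\ast H(x,p)\subset\partial H(x,p)$. As $\partial_x^- H(x,p)=\mathrm{co}\,\partial_x^\ast H(x,p)$ by semiconvexity and the $p$-component is fixed, convexity of $\partial H(x,p)$ yields $\partial_x^- H(x,p)\times\partial_p^- H(x,p)\subset\partial H(x,p)$; with (i) this is equality for $p\neq0$.

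For $p=0$, note $H(\cdot,0)\equiv 0$, so $\partial_x^- H(x,0)=\{0\}$, while $\partial_p^- H(x,0)=F(x)$. Using the homogeneity relations $\nabla_x H(x,\lambda p)=\lambda\nabla_x H(x,p)$ and $\nabla_p H(x,\lambda p)=\nabla_p H(x,p)$ for $\lambda>0$, I would approach $(x,0)$ along rays: choosing $x_k\to x$ with $H(\cdot,p)$ differentiable at $x_k$ and $\lambda_k\downarrow 0$, the points $(x_k,\lambda_k p)$ are points of joint differentiability (by the lemma) whose gradients $(\lambda_k\nabla_x H(x_k,p),\nabla_p H(x_k,p))$ tend to $(0,\nabla_p H(x,p))$. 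Since $\nabla_p H(x,p)$ runs over the exposed points of $F(x)$ as $p$ varies, this shows $\{0\}\times\{\text{exposed points of }F(x)\}\subset\partial H(x,0)$; closedness and convexity of $\partial H(x,0)$, combined with the density of exposed points in the extreme points (Straszewicz) and the Krein--Milman theorem, then give $\{0\}\times F(x)\subset\partial H(x,0)$, hence equality. This completes the proof of (ii).
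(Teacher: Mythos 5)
Your proof of part (ii)'s reverse inclusion is essentially sound, but your proof of part (i) rests on a false principle, and since (ii) is an equality this gap infects the whole lemma. The ``classical estimate'' you invoke --- that for a jointly Lipschitz function the generalized gradient is contained in the product of the partial generalized gradients --- is not a theorem of nonsmooth analysis. It fails already for $f(x,y)=\min(|x|,|y|)$ at the origin: there $\partial f(0,0)=\mathrm{co}\{(\pm 1,0),(0,\pm 1)\}$ (the reachable gradients come from the regions $0<|x|<|y|$ and $0<|y|<|x|$), while $f(\cdot,0)\equiv 0\equiv f(0,\cdot)$, so $\partial_x f(0,0)\times\partial_y f(0,0)=\{(0,0)\}$. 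No such splitting inclusion holds without a regularity hypothesis, which is precisely why the paper does not derive (i) from a general fact but quotes it from \cite[Corollary~1]{MR2728465}. The convexity in $p$ and the uniform semiconvexity in $x$ are what make (i) true, and your argument never uses them at that step (you use them only to identify the partial Clarke gradients with $\partial_x^-H$ and $\partial_p^-H$, which is correct but not the issue). A repair in the spirit of your approach: for a reachable gradient $(\zeta,\eta)=\lim_k\nabla H(x_k,p_k)$, the inequalities $H(x_k,q)\geq H(x_k,p_k)+\langle \nabla_p H(x_k,p_k),q-p_k\rangle$ (convexity) and $H(y,p_k)\geq H(x_k,p_k)+\langle \nabla_x H(x_k,p_k),y-x_k\rangle-c\,|p_k|\,|y-x_k|^2$ (semiconvexity, with constant scaled by homogeneity) are quantitative and pass to the limit jointly in $(x_k,p_k)$, giving $\zeta\in\partial_x^-H(x,p)$ and $\eta\in\partial_p^-H(x,p)$; then take convex hulls, noting that a product of convex sets is convex. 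It is exactly this joint stability of the subgradient inequalities --- false for general Lipschitz functions, as the example shows --- that your cited estimate silently presupposes.

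For the reverse inclusion in (ii) your route is correct and genuinely different from the paper's. The paper shows in one stroke, combining convexity in $p$, semiconvexity in $x$, and the Lipschitz dependence of $\nabla_p H(\cdot,p)$ on $x$, that $(\xi,\nabla_p H(x,p))$ is a \emph{proximal} subgradient of the joint function $H$ at $(x,p)$ for every $\xi\in\partial_x^-H(x,p)$, and treats $p=0$ by approximating $\eta\in\partial_p^*H(x,0)$ with $\nabla_p H(x,p_n)$, $p_n\to 0$, using the upper semicontinuity of $\partial H$ and the fact that $\partial_x^-H(x,p_n)$ shrinks to $\{0\}$. You instead isolate a joint-differentiability lemma (partial differentiability of $H(\cdot,p)$ at $x_0$ with $p\neq 0$ implies joint differentiability at $(x_0,p)$; your proof via the fundamental theorem of calculus in $p$ and joint continuity of $\nabla_p H$ is sound), propagate reachable gradients, and at $p=0$ exploit homogeneity along rays together with Straszewicz and Krein--Milman to recover all of $F(x)=\partial_p^-H(x,0)$. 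Both arguments work; yours uses more machinery but yields a reusable differentiability statement. What is missing is only a valid proof of (i).
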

\begin{proof}
 Point $(i)$ is just \cite[Corollary~1]{MR2728465}. In order to prove $(ii)$,  it suffices to show that
 \begin{equation}\label{eq:semisplitting}
\partial_x^- H(x,p)\times \partial_p^- H(x,p)\subset \partial H(x,p), \quad \forall (x,p)\in\mathbb{R}^n\times\mathbb{R}^n.
\end{equation}
 Let $r>0$ and let $c_r\geq 0$ be a semiconvexity constant for   $H(\cdot,p)$ on $B(0,r)$, uniform for $p\in S^{n-1}$. By the convexity of $H(x,\cdot)$ and assumption $(H)$ we have that, for all $x,y\in B(0,r)$, all $p,q\in \mathbb{R}^n\setminus \{0\}$, and all
  $\xi\in\partial_x^-H(x,p)$,
 \begin{eqnarray*}
\lefteqn{H(y,q)-H(x,p)-\langle \xi,y-x\rangle -\langle \nabla_p H(x,p),q-p\rangle}
\\&=&H(y,q)-H(y,p)-\langle\nabla_p H(x,p),q-p\rangle+H(y,p)-H(x,p)-\langle \xi,y-x\rangle
\\
&\geq&
\langle\nabla_p H(y,p)-\nabla_p H(x,p),q-p\rangle-c_r\,| p|\, | y-x| ^2
\\
&\geq&
-k_r\,|y-x|\,|q-p|-c_r\,| p|\, | y-x| ^2,
\end{eqnarray*}
where $k_r$ is a Lipschitz constant for $\nabla_p H(\cdot,p)$ on $B(0,r)$, uniform for $p\in S^{n-1}$. Therefore, $(\xi, \nabla_p H(x,p))$ is a proximal subgradient of $H$ at $(x,p)$ and, as such, it belongs to $\partial H(x,p)$. This proves \eqref{eq:semisplitting} for $p\neq 0$.
 Thus, it remains to show that
 \begin{equation}\label{eq:0semisplitting}
\partial_x^- H(x,0)\times \partial_p^- H(x,0)\subset \partial H(x,0), \quad \forall x\in\mathbb{R}^n.
\end{equation}
Let $x\in \mathbb{R}^n$ and observe that $\partial_x^- H(x,0)=\{0\}$ because $H(x,0)\equiv 0$. So,
\begin{equation}\label{eq:*splitting}
\partial_x^- H(x,0)\times \partial_p^- H(x,0)=\mbox{co}\big(\{0\}\times \partial_p^*H(x,0)\big).
\end{equation}
Let $\eta\in
\partial_p^*H(x,0)$ and let $p_n\in \mathbb{R}^n\setminus \{0\}$
be such that $p_n\to 0$ and $\nabla_pH(x,p_n)\to\eta$ as
$n\to\infty$\footnote{Observe that this approximation is possible even when $H(x,\cdot)$ is differentiable at $0$.}. In view of \eqref{eq:splitting}, that we have
justified for $p\neq 0$, we have that $\partial H(x,p_n) =
\partial_x^- H(x,p_n)\times \{\nabla_p H(x,p_n)\}$ for all $n\geq
1$. Moreover, $\sup_{v \in \partial_x^- H(x,p_n)} |v|\to 0$ as
$n\to\infty$ because for some $k>0$,  $H(\cdot,p_n)$ is Lipschitz
on $B(x,1)$ with constant $k|p_n|$ for all $n\geq 1$. So, by the
upper semicontinuity of $\partial H$, $(0,\eta)\in\partial H(x,0)$
and \eqref{eq:0semisplitting} follows from \eqref{eq:*splitting}.
 This
completes the proof.
\end{proof}
Thanks to the above
properties, the maximum principle (see, e.g.,
\cite[Theorem~3.2.6]{MR709590} for a standard formulation) takes
the following form.
\begin{theorem}\label{TheoDualArc2}
Assume that (SH) and (H)(i) hold and  suppose $\phi:\mathbb{R}^n
\rightarrow \mathbb{R}$ is locally Lipschitz. If $x(\cdot)$ is an
optimal solution for $\mathcal{P}(t_0,x_0)$, then there exists an
arc $p:[t_0,T]\rightarrow \mathbb{R}^n$ which, together with
$x(\cdot)$, satisfies
\begin{equation}\label{HI2}
\left\{\begin{array}{rll}
\dot{x}(s) &\in& \partial_p^- H(x(s),p(s)), \\
-\dot{p}(s) &\in& \partial_x^- H(x(s),p(s)),
\end{array}\right.
\quad \mbox{for a.e. }\ s\in \left[ t_0, T \right]
\end{equation}
and
\begin{equation}\label{TC2}
-p(T)\in \partial \phi(x(T)).
\end{equation}
\end{theorem}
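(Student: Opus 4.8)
The plan is to derive \eqref{HI2}--\eqref{TC2} by combining a classical, non-split form of the maximum principle with the splitting inclusion of Lemma~\ref{le:splitting}(i). First I would record that under (SH) the Hamiltonian $H(x,p)=\sup_{v\in F(x)}\langle v,p\rangle$ is locally Lipschitz on $\mathbb{R}^n\times\mathbb{R}^n$: local Lipschitz continuity in $x$ follows from the local Lipschitz continuity of $F$ (with compact values), while Lipschitz continuity in $p$ follows from the linear growth bound in (SH)(iii), since $H(x,\cdot)$ is the support function of the compact set $F(x)$. Together with hypothesis (H)(i), which asserts precisely that $H(\cdot,p)$ is semiconvex on every ball $B(0,r)$ with a constant that is uniform for $p\in S^{n-1}$, the hypotheses of Lemma~\ref{le:splitting}(i) are met, whence
\[
\partial H(x,p)\subset \partial_x^- H(x,p)\times \partial_p^- H(x,p),\qquad \forall (x,p)\in\mathbb{R}^n\times\mathbb{R}^n.
\]

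Next I would invoke the standard maximum principle for Mayer's problem with a differential inclusion in Hamiltonian (generalized-gradient) form, e.g. \cite[Theorem~3.2.6]{MR709590}. Since $F$ has nonempty, convex, compact values and is locally Lipschitz with linear growth, and $\phi$ is locally Lipschitz, that result applies to the optimal arc $x(\cdot)$ and produces an adjoint arc $p:[t_0,T]\to\mathbb{R}^n$ satisfying the joint Hamiltonian inclusion
\[
(-\dot p(s),\dot x(s))\in \partial H(x(s),p(s))\qquad \text{for a.e. } s\in[t_0,T],
\]
together with the transversality condition $-p(T)\in\partial\phi(x(T))$, which is exactly \eqref{TC2}. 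Here the convexity of the values of $F$ is what rules out a relaxation gap, so that the maximized Hamiltonian inclusion is the correct necessary condition, and the free right endpoint is what allows the cost multiplier to be normalized to $1$.

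Finally I would merge the two ingredients: feeding the joint inclusion into the product estimate above gives
\[
(-\dot p(s),\dot x(s))\in \partial_x^- H(x(s),p(s))\times \partial_p^- H(x(s),p(s))\quad \text{for a.e. } s\in[t_0,T],
\]
and reading off the two factors yields $-\dot p(s)\in\partial_x^- H(x(s),p(s))$ and $\dot x(s)\in\partial_p^- H(x(s),p(s))$, i.e. precisely the system \eqref{HI2}. The essentially nontrivial content is thus entirely contained in the splitting inclusion of Lemma~\ref{le:splitting}(i); the rest is bookkeeping. The step I expect to require the most care is the interface between the cited maximum principle and the splitting lemma, namely verifying that the sign and normalization conventions (the sign of $p$, the definition of $H$ as a supremum over $F(x)$, and the precise form of the transversality condition) match those used here, and confirming that local Lipschitzness of $H$ together with (H)(i) genuinely exhausts the hypotheses of Lemma~\ref{le:splitting}(i) on all of $\mathbb{R}^n\times\mathbb{R}^n$, not merely on a fixed ball.
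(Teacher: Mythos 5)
Your proposal is correct and follows essentially the same route as the paper, which obtains this theorem exactly by specializing the classical Hamiltonian-form maximum principle (Clarke, Theorem~3.2.6 of \cite{MR709590}) via the splitting inclusion $\partial H(x,p)\subset \partial_x^- H(x,p)\times \partial_p^- H(x,p)$ of Lemma~\ref{le:splitting}(i). The only detail the paper leaves implicit, and which you rightly supply, is that (SH) makes $H$ locally Lipschitz in $(x,p)$ so that, together with (H)(i), the hypotheses of Lemma~\ref{le:splitting}(i) hold on all of $\mathbb{R}^n\times\mathbb{R}^n$.
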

An absolutely continuous function $p(\cdot)$ satisfying the Hamiltonian system (\ref{HI2}) and the \emph{tranversality condition} (\ref{TC2}) is called a \emph{dual arc} associated with $x(\cdot)$. Moreover, if $v$ belongs to $\partial_p H(x,p)$, then $v\in F(x)$ and $\langle p, v \rangle = H(x,p)$. Thus, the system (\ref{HI2}) encodes the equality
\begin{equation}\label{MAXIMUMprincipleEQUATION}
H(x(t),p(t))= \langle p(t), \dot{x}(t) \rangle \mbox{ for a.e. } t \in [t_0,T].
\end{equation}
This equality shows that the scalar product $\langle v,p(t) \rangle$ is maximized over $F(x(t))$ by $v=\dot{x}(t)$, and for this reason the previous theorem is referred to as the \emph{maximum principle} (in Hamiltonian form).
\begin{remark}\label{RemarkDualArc} Let $\overline{p}$ be a dual arc associated with the optimal trajectory $\overline{x}$. Observe that under assumption $(SH)$ there are only two possible cases:
\begin{enumerate}
\item[(i)] either $\overline{p}(t)\neq 0$ for all $t\in [t_0,T]$,
\item[(ii)] or $\overline{p}(t)=0$ for all $t\in [t_0,T]$.
\end{enumerate}
Indeed, consider $r >0$ such that $\overline{x}([t_0,T]) \subset
B(0,r)$. If we denote by $c_r$ a Lipschitz constant for $F$ on
$B(0,r)$, then $c_r \vert p\vert$ is a Lipschitz constant for
$H(\cdot, p)$ on $B(0,r)$. Thus,
\begin{equation}\label{Impoooo}
\vert \zeta \vert \leq c_r \vert p \vert \quad \forall \zeta \in
\partial^-_x H(x,p),~ \forall x \in B(0,r),~ \forall p \in \mathbb{R}^n.
\end{equation}
Hence, in view of \eqref{HI2}, $ \vert
\dot{\overline{p}}(s)\vert\leq c_r \vert \overline{p}(s)\vert$ for
a.e. $s \in [t_0,T]$.
 Gronwall's Lemma allows to conclude.\\
\end{remark}
\begin{remark}\label{UltimoRemark}
Under the assumptions of Theorem \ref{TheoDualArc2}, if
$\partial^+\phi(\overline{x}(T))\neq \emptyset$, then for any $q\in
\partial^{+}\phi(\overline{x}(T))$ there exists an arc
$\overline{p}$ such that the pair $(\overline{x},\overline{p})$
satisfies \eqref{HI2} and $-\bar p(T)=q$. Indeed, since $q\in
\partial^{+}\phi(\overline{x}(T))$, there exists a function $g \in
C^1(\mathbb{R}^n;\mathbb{R})$ such that $g\geq \phi$,
$g(\overline{x}(T))=\phi(\overline{x}(T))$, and $ \nabla
g(\overline{x}(T))= q$ (see, for instance,
\cite[Proposition~3.1.7]{MR2041617}). Then, $\overline{x}$ is
still optimal for the Mayer problem \eqref{Mayer}-\eqref{May2}
with $\phi$ replaced by $g$. Thus, by Theorem \ref{TheoDualArc2}
there exists an arc $\overline{p}$ such that the pair
$(\overline{x},\overline{p})$ satisfies \eqref{HI2} and $-\bar
p(T)=q$.
\end{remark}
The existence of the gradient of $H$ with respect to $p$ at some $(x,p)$ is equivalent to the fact that the argmax set of $\langle v,p \rangle$ over $v\in F(x)$ is the singleton $\lbrace \nabla_p H(x,p)\rbrace$, that is,
\begin{equation}\label{derive}
H(x,p)= \langle \nabla_p H(x,p), p \rangle,\ \forall p \neq 0.
\end{equation}
In turn, \eqref{derive} implies that for every $x$ the boundary of $F(x)$ does not contain intervals $[a,b]$ with $a\neq b$. Moreover, assumption $(H)$ allows
 to state the following  result.
\begin{lemma}\label{LemmmaLip}
Assume that (SH) and (H) hold, and let $p:[t,T]\rightarrow \mathbb{R}^n\setminus \lbrace 0 \rbrace$ be continuous. Then for
each $\zeta \in \mathbb{R}^n$ the Cauchy problem
\begin{equation}\label{Cau}
\left\{\begin{array}{l}
\dot{y}(s)= \nabla_p H (y(s),p(s)) \quad \mbox{ for all } s\in \left[ t, T \right],\\
y(t)= \zeta,
\end{array}\right.
\end{equation}
has a unique solution $y(\cdot;t,\zeta )$. Moreover, for every  $
\zeta \in\mathbb{R}^n$ there exists $ k\geq 0$ such that $ |
y(s;t,\zeta )-y(s;t,z)|\leq e^{k (s-t)} |z-\zeta|$  for all $ z
\in B (\zeta,1)$,  $ s \in [t,T]. $
\end{lemma}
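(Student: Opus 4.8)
The plan is to prove existence, uniqueness, and the Lipschitz dependence on initial data by exhibiting the right-hand side of \eqref{Cau} as a Carathéodory vector field that is Lipschitz in the state variable locally in $y$, uniformly over the compact time interval $[t,T]$, and then apply the classical Picard–Lindelöf / Gronwall machinery. The key point to extract from assumption $(H)$ is that, since $p(s)\neq 0$ for every $s$ and $p$ is continuous on the compact interval $[t,T]$, the image $p([t,T])$ is a compact subset of $\mathbb{R}^n\setminus\{0\}$, so there are constants $0<m\leq M$ with $m\leq|p(s)|\leq M$ for all $s$. This keeps us uniformly away from the singularity of $\nabla_p H$ at $p=0$ while we invoke $(H)(ii)$.

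\emph{First} I would fix a radius $r>0$ and use $(H)(ii)$ to obtain a constant $k_r$ such that $y\mapsto\nabla_p H(y,p)$ is Lipschitz with constant $k_r$ on $B(0,r)$, uniformly for $p\neq 0$; in particular for $p=p(s)$. Together with the continuity of $p(\cdot)$, the map $(s,y)\mapsto\nabla_p H(y,p(s))$ is continuous in $s$ for each $y$ and Lipschitz in $y$ uniformly in $s$ on $B(0,r)$, which makes it a Carathéodory field satisfying a local one-sided Lipschitz bound. \emph{Next}, the sublinear growth of $H$ inherited from $(SH)(iii)$ (since $|\nabla_p H(y,p)|\leq\gamma(1+|y|)$ whenever $\nabla_p H(y,p)\in F(y)$, which holds by the remark preceding the lemma that $v\in\partial_p H$ forces $v\in F(x)$) yields a linear growth estimate $|\nabla_p H(y,p(s))|\leq\gamma(1+|y|)$. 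This linear growth prevents finite-time blow-up, so the maximal solution extends to all of $[t,T]$; standard ODE theory then gives existence and uniqueness of $y(\cdot;t,\zeta)$ on the full interval.

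\emph{Finally}, for the Lipschitz dependence I would take two initial data $\zeta$ and $z\in B(\zeta,1)$, set $d(s)=|y(s;t,\zeta)-y(s;t,z)|$, and estimate
\[
d(s)\leq d(t)+\int_t^s\bigl|\nabla_p H(y(\sigma;t,\zeta),p(\sigma))-\nabla_p H(y(\sigma;t,z),p(\sigma))\bigr|\,d\sigma.
\]
Choosing $r$ large enough that both trajectories remain in $B(0,r)$ on $[t,T]$ (possible because the a priori linear-growth bound gives a uniform envelope for all initial data in the bounded set $B(\zeta,1)$), the integrand is bounded by $k_r\,d(\sigma)$. Gronwall's inequality then gives $d(s)\leq e^{k_r(s-t)}|z-\zeta|$, which is exactly the claimed estimate with $k:=k_r$.

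The main obstacle, and the only point requiring care, is the uniform confinement step: before applying the Lipschitz constant $k_r$ one must first know a single radius $r$ that traps \emph{all} the relevant trajectories $y(\cdot;t,z)$ for $z\in B(\zeta,1)$ simultaneously. This is where the linear growth bound is indispensable—it produces, via Gronwall applied to $|y(s;t,z)|$ itself, an a priori bound depending only on $|\zeta|+1$, $\gamma$, and $T-t$, thereby fixing $r$ independently of the individual $z$. Once $r$ is fixed, everything else is routine Carathéodory theory.
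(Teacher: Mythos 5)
Your proof is correct: the paper states this lemma without proof, treating it as a standard consequence of its assumptions, and your argument is exactly the standard one it relies on --- Lipschitz dependence in the state variable from (H)(ii), linear growth from the fact that $\nabla_p H(x,p)\in F(x)$ combined with (SH)(iii), an a priori Gronwall bound to confine all trajectories issuing from $B(\zeta,1)$ in a single ball $B(0,r)$, and then Picard--Lindel\"of plus Gronwall for uniqueness and the estimate $e^{k(s-t)}|z-\zeta|$. The only point you use implicitly is the continuity of $\nabla_p H(y,\cdot)$ on $\mathbb{R}^n\setminus\{0\}$ (needed so that $s\mapsto \nabla_p H(y,p(s))$ is continuous, hence the field is Carath\'eodory), which is a standard consequence of the convexity of $H(y,\cdot)$ and its differentiability away from the origin, so this does not constitute a gap.
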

\begin{remark} \label{RemarkContinuity}
Suppose that $x(\cdot)$ is optimal for $\mathcal{P}(t_0,x_0)$ and $p(\cdot)$ is a nonvanishing dual arc associated to $x(\cdot)$. Then, Lemma \ref{LemmmaLip} implies that $x(\cdot)$ is the unique solution of (\ref{Cau}) with $t=t_0$ and $y(t_0)= x_0$. Furthermore, in this case, $x(\cdot)$ is of class $C^1$ and the maximum principle \eqref{MAXIMUMprincipleEQUATION} holds true for all $t\in [t_0,T]$. If $p\equiv 0$ is a dual arc, then \eqref{MAXIMUMprincipleEQUATION} continues to hold everywhere on $[t_0,T]$.
\end{remark}
The \emph{value function} $V:(-\infty,T] \times \mathbb{R}^n
\rightarrow \mathbb{R}$ associated to the Mayer problem  is
defined by: for all $(t_0,x_0)\in (-\infty,T]\times\mathbb{R}^n$
\begin{equation}\label{ValueFunction}
V(t_0,x_0)= \inf \left\lbrace \phi(x(T)) : x \in W^{1,1}\left([t_0,T]; \mathbb{R}^n\right) \mbox{ satisfies } \eqref{May1}-\eqref{May2} \right\rbrace.
\end{equation}
It is well known that, under assumption $(SH)$, if  $\phi$ is
locally Lipschitz, then $V$ is locally Lipschitz and satisfies, in
the viscosity sense, the Hamilton-Jacobi equation \eqref{HJB},
where $H$ is the Hamiltonian associated to $F$ by \eqref{intro:H}.
The semiconcavity of $V$ was investigated under the current
assumptions in \cite{MR2728465}. Finally, recall that $V$
satisfies the \emph{dynamic programming principle}. This means
that if $y(\cdot)$ is a trajectory of the system
\eqref{May1}-\eqref{May2}, then the function $s\mapsto
V(s,y(s))$ is nondecreasing, and  is constant if and only if
$y(\cdot)$ is optimal $\mathcal{P}(t_0,x_0)$.
\subsection{An overview of the theory of conjugate points in optimal control}
There is a close link between the theory of conjugate times, Riccati equations and second-order regularity properties of the value function. Here we wish to summarize some facts used hereafter. A detailed exposition of such results, more suited to the purposes of the present paper, is given in \cite{MR1344204, frankowska:hal-00851752}. Other details about Riccati equations can be found, e.g., in \cite{Reid:1253848}.\\

Consider  functions $\mathcal{H}:\mathbb{R}^n \times \mathbb{R}^n \rightarrow \mathbb{R}$, $\phi: \mathbb{R}^n \rightarrow \mathbb{R}$, and the Hamilton-Jacobi equation
\begin{equation}\label{HJBeqAncora}
\left\{\begin{array}{ll}
-\partial_t u(t,x) + \mathcal{H} (x, - u_x (t,x) )=0 &\mbox{in } (-\infty,T)\times \mathbb{R}^n,\\
u(T,x)=\phi(x), & x\in \mathbb{R}^n.
\end{array}\right.
\end{equation}
Under appropriate assumptions, \eqref{HJBeqAncora} has a unique viscosity solution which may be nonsmooth even when the data $\mathcal{H}$ and $\phi$ are smooth. On the other hand, the regularity of the solution of \eqref{HJBeqAncora} is strictly connected
 with the characteristic system below:
\begin{equation}\label{CJ*}
\left\{\begin{array}{rllrrl}
\dot{x}(t)&=& \nabla_p \mathcal{H} (x(t),p(t)), & x(T)&=&z, \\
-\dot{p}(t)&=& \nabla_x \mathcal{H} (x(t),p(t)), & -p(T)&=&\nabla \phi(z).
\end{array}\right.
\end{equation}
Assume that for some $z \in \mathbb{R}^n$ the solution of
(\ref{CJ*}) is defined on $(-\infty,T]$ and
 denote it by
$(x(\cdot,z),~ p(\cdot,z))$. Here we suppose that $\mathcal{H}$
and $\phi$ are of class $C^2$ in some neighborhoods of the set
$\bigcup_{t \leq T}\{(x(t,z),~ p(t,z))\}$ and the point $z$,
 respectively. If $\mathcal{H}$ is the Hamiltonian associated to the Mayer problem by \eqref{intro:H}, then \eqref{CJ*} coincides with the Hamiltonian system from Theorem \ref{TheoDualArc2}
  whenever $\nabla \phi (z)\neq 0$. By differentiating the solution map of \eqref{CJ*} with respect to $z$, we obtain that
  $\left(\frac{d}{d z} x(\cdot,z),~ \frac{d}{dz} p(\cdot,z)\right)$ satisfies the \emph{variational system}: for $t \in (-\infty , T ]$,
\begin{equation}\label{CP_prem}
\left\{\begin{array}{rllrrl}
\dot{X}(t)&=&\mathcal{H} _{xp} [t]X(t) + \mathcal{H}_{pp}[t] P(t) ,& X(T)&=& I, \\
-\dot{P}(t)&=& \mathcal{H} _{xx}[t] X(t)+ \mathcal{H} _{px} [t] P(t),&  -P(T)&=&\nabla^2\phi(z),
\end{array}\right.
\end{equation}
where $\mathcal{H}_{ij}[t]$ stands for $\nabla^2_{ij} \mathcal{H} (x(t,z),p(t,z))$, for $i,j\in\lbrace x,p\rbrace $. The solution of \eqref{CP_prem},
 which depends on $z$, is denoted by $(X,P)$. Note that $X(t)$ is invertible for $t$ sufficiently close to $T$, and moreover the function $R(t):=P(t)X(t)^{-1}$,
 as long as $X(t)$ is invertible, solves the Riccati equation
\begin{equation}\label{RiccatiGiusta}
\left\lbrace\begin{array}{l}
\dot{R}+ \mathcal{H}_{px}[t]R + R \mathcal{H}_{xp}[t]+R \mathcal{H}_{pp}[t] R+\mathcal{H}_{xx}[t]=0,\\
R(T)= - \nabla^2 \phi(z).
\end{array}
\right.
\end{equation}
Since $\mathcal{H}_{pp}[t], \;\mathcal{H}_{xx}[t]$, $\nabla^2
\phi(z)$ are symmetric matrices and
 $\mathcal{H}_{px}[t]=\mathcal{H}_{xp}[t]^*$, also  $R^*(\cdot)$  solves
equation \eqref{RiccatiGiusta}. From the uniqueness of the solution of
\eqref{RiccatiGiusta} we deduce that $R(\cdot)=R^*(\cdot)$, that
is, the values of $R(\cdot)$ are symmetric matrices.

The solution of \eqref{RiccatiGiusta} may escape to infinity in a finite time $t < T$, because of the presence of the quadratic term $R \mathcal{H}_{pp}[t] R$. For any $z \in \mathbb{R}^n$, define
\[
t_c=\inf \Big\lbrace t \in (-\infty,T]:~  R(s) \mbox{ is defined for all } s\in[t,T] \Big\rbrace.
\]
If $t_c > -\infty$, then it is called a \emph{conjugate time} for $z$, and $\lim_{t \searrow t_c} \parallel R(t) \parallel = + \infty$. Equivalently,
\[
t_c=\inf \Big\lbrace t\in (-\infty,T]:~ \det X(s)\neq 0 \mbox{ for all } s\in [t,T] \Big\rbrace,
\]
and, if $t_c>-\infty$, then $\det X(t_c)=0$. It is well known that,
if $\mathcal{H}_{pp}[\cdot]\geq 0$ on $[t_c,T]$, then the solution
of the linear equation
\begin{equation}\label{RiccatiGiusta2}
\left\lbrace\begin{array}{l}
\dot{Q}(t)+ \mathcal{H}_{px}[t]Q(t)+Q(t) \mathcal{H}_{xp}[t]+\mathcal{H}_{xx}[t]=0,\\
Q(T)= - \nabla^2 \phi(z).
\end{array}
\right.
\end{equation}
satisfies $Q(t)\leq R(t)$, in the sense that for any
$x\in\mathbb{R}^n$ we have that $\langle (R(t)-Q(t)) x , x \rangle
\geq 0$ for all $ t\in (t_c,T]$. In particular, for all $\eta \in
B$ and $t\in (t_c,T]$, $\langle R(t)\eta,\eta\rangle \geq -
\parallel Q(t) \parallel$. Since  $R(t)$ is symmetric,
if $t_c >-\infty$, then for some $x_t \in S^{n-1}$ we have
$\langle R(t) x_t , x_t \rangle \rightarrow +\infty$ as $t
\searrow t_c$.

If $(x(\cdot,z),~ p(\cdot,z))$  is given on a finite time interval
$[t_0,T]$, then the above definition of conjugate time can be
adapted, by saying that $t_c \in [t_0,T]$ is a conjugate time for
$z$ if $R(\cdot)$ is well defined on $(t_c,T]$ and $\lim_{t
\searrow t_c} \parallel R(t) \parallel = + \infty$. Equivalently,
if $\det X(t)\neq0$ for all  $t \in (t_c,T]$ and  $\det X(t_c)=0$.

In the present context, the occurrence of a conjugate time corresponds to the first time, proceeding backward, when $V(t,\cdot)$ stops to be twice differentiable at $x(t)$. More precisely, the assertion is as follows:
\begin{theorem}\label{TheoPntConju}
Let $(t_0,z_0)\in (-\infty,T]\times \mathbb{R}^n$ and assume that
a solution of  (\ref{CJ*}) with $z=z_0$ exists on $[t_0,T]$.
Define for $t\in [t_0,T]$, $\rho>0$
$$ M_{t,\rho}(z_0)= \lbrace \left( x(t),p(t)\right) \in \mathbb{R}^n \times \mathbb{R}^n :~ (x,p)\mbox{ solves } (\ref{CJ*})\mbox{ with } z\in \mathring{B}(z_0,\rho)  \rbrace.
$$
Assume that $\mathcal{H}$ is of class $C^2$ in a neighborhood of $\cup_{t\in[t_0,T]} M_{t,\delta}(z_0)$ for some $\delta>0$, and that $\phi$ is of class $C^2$ in a neighborhood of $z_0$. Consider any $\overline{t}\in [t_0,T]$. Then the following are equivalent:
\begin{enumerate}
\item there exists $\rho>0$ such that, for all $t \in [\overline{t},T]$, $M_{t,\rho}(z_0)$ is the graph of a continuously differentiable map $\Phi_t:\mathcal{O}_t \mapsto \mathbb{R}^n$, where $\mathcal{O}_t$ is an open subset of $\mathbb{R}^n$. Furthermore,
 the Jacobian $D \Phi_t$ of $\Phi_t$ satisfies
\begin{equation}\label{DerSec}
D \Phi_t (x(t))= P(t)X(t)^{-1} \quad \forall t \in [\overline{t},T],
\end{equation}
where $(x,p)$ and $(X,P)$ are the solutions of \eqref{CJ*} and
\eqref{CP_prem}  respectively with $z\in B(z_0,\rho)$; \item the
interval $[\overline{t},T]$ contains no conjugate time for $z_0$.
\end{enumerate}
Furthermore, suppose that $\mathcal{H}\in C^{2,1}$ in a
neighborhood of $\cup_{t\in[t_0,T]} M_{t,\delta}(z_0)$ for some
$\delta>0$, and, for some $m\in (0,1]$, $\phi\in C^{2,m}$ in a
neighborhood of $z_0$. If any of the conditions 1-2 holds true,
then there exist $\rho>0, \, c>0 $ such that $\Phi_t$ is of class
$C^{1,m}(\mathcal{O}_t)$ with H\" {o}lder constant $c$ for all $t
\in [\overline{t},T]$.
\end{theorem}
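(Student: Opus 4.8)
The plan is to extract both implications from a single identity. Write $\mathcal{X}_t(z):=x(t,z)$ and $\mathcal{P}_t(z):=p(t,z)$ for the solution maps of \eqref{CJ*}. Since $\mathcal{H}\in C^2$ near $\cup_{t\in[t_0,T]}M_{t,\delta}(z_0)$ and $\phi\in C^2$ near $z_0$, the right-hand side of \eqref{CJ*} is $C^1$ and the terminal datum $z\mapsto(z,-\nabla\phi(z))$ is $C^1$; hence $z\mapsto(x(t,z),p(t,z))$ is $C^1$ near $z_0$, uniformly for $t\in[\bar t,T]$, and its $z$-derivative is exactly $(X(t),P(t))$ solving \eqref{CP_prem} (this is Lemma \ref{LemmaTacnico}, applied after reversing time and composing the free-endpoint flow derivative with $z\mapsto(z,-\nabla\phi(z))$). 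Let $W(t)$ denote the $2n\times n$ matrix with upper block $X(t)$ and lower block $P(t)$, and let $\Psi(t)$ be the fundamental matrix of the linear system underlying \eqref{CP_prem}, normalized by $\Psi(T)=I_{2n}$. Then $W(t)=\Psi(t)\,W(T)$; since $\Psi(t)$ is invertible for every $t$ and $W(T)$ has upper block $I$, the matrix $W(t)$ has full column rank $n$ for all $t\in[\bar t,T]$. This rank property is the algebraic fact that drives both directions.

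For $(1)\Rightarrow(2)$, the graph condition means $p(t,z)=\Phi_t(x(t,z))$ for $z\in\mathring B(z_0,\rho)$ and $t\in[\bar t,T]$. Differentiating in $z$ at $z_0$ gives $P(t)=D\Phi_t(x(t))\,X(t)$. If some $t_1\in[\bar t,T]$ were a conjugate time, i.e.\ $\det X(t_1)=0$, pick $v\neq0$ with $X(t_1)v=0$; then $P(t_1)v=D\Phi_{t_1}(x(t_1))X(t_1)v=0$, so $W(t_1)v=0$, contradicting the full rank of $W(t_1)$. Hence $X(t)$ is invertible on $[\bar t,T]$, there is no conjugate time, and $P(t)=D\Phi_t(x(t))X(t)$ yields \eqref{DerSec} in the form $D\Phi_t(x(t))=P(t)X(t)^{-1}$, which is precisely the Riccati matrix of \eqref{RiccatiGiusta}.

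For $(2)\Rightarrow(1)$, the absence of conjugate times means $\det X(t,z_0)\neq0$ for all $t\in[\bar t,T]$. By continuity of $X(\cdot,z_0)$ on the compact interval $[\bar t,T]$ there is a uniform bound $\sup_t\|X(t,z_0)^{-1}\|=:K<\infty$, and by continuity of $X$ in $(t,z)$ we can fix $\rho>0$ so small that $\|X(t,z)-X(t,z_0)\|\le 1/(2K)$ for all $t\in[\bar t,T]$, $z\in B(z_0,\rho)$. A quantitative inverse function theorem then shows that each $\mathcal{X}_t$ is injective on $\mathring B(z_0,\rho)$: indeed this bound gives $|\mathcal{X}_t(z_1)-\mathcal{X}_t(z_2)|\ge \frac{1}{2K}|z_1-z_2|$, with the \emph{same} $\rho$ for every $t$. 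This uniformity in $t$ is the main obstacle, and it is exactly what the compactness of $[\bar t,T]$ together with the uniform bound on $X^{-1}$ buys us. Granting it, set $\mathcal{O}_t:=\mathcal{X}_t(\mathring B(z_0,\rho))$, open since $\mathcal{X}_t$ is a local diffeomorphism, and define $\Phi_t:=\mathcal{P}_t\circ\mathcal{X}_t^{-1}$. Then $M_{t,\rho}(z_0)$ is the graph of the $C^1$ map $\Phi_t$, and the chain rule gives $D\Phi_t(x(t,z))=P(t)X(t)^{-1}$ for all $z\in B(z_0,\rho)$, i.e.\ \eqref{DerSec}.

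It remains to upgrade the regularity. Assuming $\mathcal{H}\in C^{2,1}$, the characteristic field in \eqref{CJ*} is $C^{1,1}$, so the free-endpoint flow is $C^{1,1}$; assuming $\phi\in C^{2,m}$, the terminal datum $z\mapsto(z,-\nabla\phi(z))$ is $C^{1,m}$. Composing, $z\mapsto(x(t,z),p(t,z))$ is $C^{1,m}$, with Hölder constants uniform in $t\in[\bar t,T]$ by compactness, whence $X(\cdot)$ and $P(\cdot)$ are $m$-Hölder in $z$. Since $X$ is invertible with uniformly bounded inverse, $R=PX^{-1}$ is $m$-Hölder in $z$ as well, and $\mathcal{X}_t^{-1}$ is Lipschitz with a uniform constant (again from the bound on $X^{-1}$). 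Writing $D\Phi_t(x)=R(t,\mathcal{X}_t^{-1}(x))$ and composing the $m$-Hölder map $R(t,\cdot)$ with the Lipschitz map $\mathcal{X}_t^{-1}$ shows that $\Phi_t\in C^{1,m}(\mathcal{O}_t)$ with a Hölder constant $c$ independent of $t$, as claimed.
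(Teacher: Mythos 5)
Your proof is correct, but it is worth noting that the paper does not actually prove this theorem: the equivalence of conditions 1 and 2 is recalled from the literature (the paper cites \cite{MR1344204} for it), and the H\"older statement is justified in one sentence via \eqref{DerSec} and the linearity of \eqref{CP_prem}. So what you have done differently is to supply a self-contained argument where the paper defers to a reference. Your route is the natural one and, as far as I can check, sound: smooth dependence on (terminal) data --- essentially Lemma \ref{LemmaTacnico} after time reversal, composed with $z\mapsto(z,-\nabla\phi(z))$ --- identifies $(X(t,z),P(t,z))$ as the $z$-derivative of the characteristic flow; the full column rank of the stacked matrix $\bigl(\begin{smallmatrix}X(t)\\ P(t)\end{smallmatrix}\bigr)$, obtained from the invertible fundamental solution $\Psi(t)$ and the block $X(T)=I$, is exactly the right algebraic lemma to make $(1)\Rightarrow(2)$ a two-line contradiction; and for $(2)\Rightarrow(1)$ the uniform bound on $\|X(t,z_0)^{-1}\|$ over the compact interval plus the integral form of the mean value theorem gives the quantitative injectivity bound $|\mathcal{X}_t(z_1)-\mathcal{X}_t(z_2)|\geq \frac{1}{2K}|z_1-z_2|$ with a single $\rho$ for all $t$, which is indeed the only delicate uniformity issue. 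Your H\"older upgrade ($X,P$ are $m$-H\"older in $z$ with constants uniform in $t$, hence so is $R=PX^{-1}$ since $X^{-1}$ is uniformly bounded, and $\mathcal{X}_t^{-1}$ is uniformly Lipschitz) is precisely the expansion of the paper's one-line remark. What the paper's approach buys is brevity appropriate to a preliminaries section; what yours buys is a complete proof, at the cost of asserting (reasonably, but without proof) a few standard ODE facts --- $C^1$ and $C^{1,1}$ dependence of flows on data, and uniformity of the little-$o$ and H\"older constants over $[\bar t,T]$.
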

The equivalence of the above  two statements  was proved in
\cite{MR1344204}.  If $\phi\in C^{2,m}(\mathring{B}(z_0,\rho))$,
then one can easily deduce that $D \Phi_t (x(t))$ must be
H\"{o}lder continuous with exponent $m$, for all $t  \in  [\bar t,
T]$, thanks to \eqref{DerSec} and the fact that $(X,P)$ is the
solution of the linear system of ODEs \eqref{CP_prem}.
\begin{remark}
We point out that, when $\mathcal{H}$ is the Hamiltonian defined in
\eqref{intro:H} and $V$ is differentiable with respect to $x$, we
have already proved in \cite{Nostro} that $ \Phi_t
(x(t))=-\nabla_x V(t,x(t)) $  for all $t\in (t_c,T]. $ The first
point of  Theorem \ref{TheoPntConju} means that $V(t,\cdot)$ is of
class $C^2$ in a neighborhood of $x(t)$, and $ - \nabla^2_{xx}
V(t, x(t))= P(t)X(t)^{-1}= R(t).$ Moreover, if $\phi\in
C^{2,m}_{loc}$ for some $m\in (0,1]$ and
 $\mathcal{H}\in C^{2,1}$ in some suitable set, then $V(t,\cdot)$ is of class $C^{2,m}$ in a neighborhood of $x(t)$  for all $t \in (t_c,T]$.
\end{remark}
\section{Sensitivity relations}
This section is devoted to the analysis of first- and second-order
sensitivity relations.
\subsection{Subdifferentiability of the value function}
In this section, our first goal is to prove that the proximal subdifferentiability of $V(t,\cdot)$ propagates along optimal trajectories forward in time, as opposed to the proximal superdifferentiability that propagates backwards (see \cite[Theorem~4.1]{Nostro} for the result concerning propagation of the proximal superdifferential of $V$).
\begin{theorem}\label{Lemma_sub_prox}
Assume (SH), (H), and suppose $\phi: \mathbb{R}^n \rightarrow \mathbb{R}$ is locally Lipschitz. Let $\overline{x}:[t_0,T]\rightarrow \mathbb{R}^n$ be optimal for $\mathcal{P}(t_0,x_0)$ and let $\overline{p}:[t_0,T]\rightarrow \mathbb{R}^n$ be an
 arc such that $(\overline{x},\overline{p})$ is a solution of  the system
\begin{equation}\label{sub_prox}
\left\{\begin{array}{rllrrl}
\dot{x}(s) &\in& \partial_p^- H(x(s),p(s)), & x(t_0)&=&x_0 \\
-\dot{p}(s) &\in& \partial_x^- H(x(s),p(s)), & -p(t_0)&\in& \partial_{x}^{-,pr} V(t_0,x_0)
\end{array}\right.
\mbox{for a.e. }\ s\in \left[ t_0, T \right].
\end{equation}
Then, there exist constants $c, r>0$ such that, for all
$t\in[t_0,T]$ and $h \in B (0,r)$,
\begin{equation}\label{sub_prox_eq}
V(t,\overline{x}(t)+h)-V(t,\overline{x}(t)) \geq\  \langle -\overline{p}(t),h\rangle  - c \mid h \mid^2.
\end{equation}
Consequently,  $ -\overline{p}(t)\in
\partial_{x}^{-,pr}V(t,\overline{x}(t)) $ and $
(-\overline{p}(t),-c I_n )\in J^{2,-}_x V(t,\overline{x}(t)) $ for
all $ t\in[t_0,T].$
\end{theorem}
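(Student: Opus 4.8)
The plan is to transfer the proximal lower bound available at the initial time $t_0$ to an arbitrary $t\in[t_0,T]$ by combining the dynamic programming principle with a carefully chosen competitor trajectory. Since $\overline{x}$ is optimal, the dynamic programming principle gives $V(t,\overline{x}(t))=V(t_0,x_0)$ for every $t$, while the initial condition in \eqref{sub_prox} provides $c_0,\rho_0>0$ with
\begin{equation*}
V(t_0,z)-V(t_0,x_0)\ \geq\ \langle -\overline{p}(t_0),z-x_0\rangle-c_0|z-x_0|^2,\qquad z\in B(x_0,\rho_0).
\end{equation*}
Fix $t\in[t_0,T]$ and $h$ small. If I can produce a trajectory $y^h$ of \eqref{May1} on $[t_0,t]$ with $y^h(t)=\overline{x}(t)+h$ and $|y^h(s)-\overline{x}(s)|\leq M|h|$ for all $s$, where $M$ depends only on $T-t_0$ and the Lipschitz constant of $F$ near $\overline{x}([t_0,T])$, then the monotonicity of $s\mapsto V(s,y^h(s))$ yields $V(t,\overline{x}(t)+h)\geq V(t_0,y^h(t_0))$, and (once $M|h|\leq\rho_0$) the displayed bound reduces the problem to comparing $\langle -\overline{p}(t_0),y^h(t_0)-x_0\rangle$ with $\langle -\overline{p}(t),h\rangle$ up to an error quadratic in $h$.

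By Remark~\ref{RemarkDualArc} there are only two cases. If $\overline{p}\equiv 0$, the comparison is vacuous and I only need the trajectory; I would obtain it by applying Filippov's theorem (see, e.g., \cite{MR755330}) to the time-reversed inclusion $\dot z\in -F(z)$ with the reversed reference arc and initial datum $\overline{x}(t)+h$, producing $y^h$ with $y^h(t)=\overline{x}(t)+h$ and $|y^h(s)-\overline{x}(s)|\leq M|h|$. If instead $\overline{p}(s)\neq 0$ for all $s$, I take $y^h$ to be the solution of the characteristic flow $\dot y(s)=\nabla_p H(y(s),\overline{p}(s))$ on $[t_0,t]$ with terminal datum $y(t)=\overline{x}(t)+h$, integrated backward. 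This is again a trajectory of \eqref{May1} since $\nabla_p H(y,p)\in F(y)$; it reduces to $\overline{x}$ when $h=0$ by Remark~\ref{RemarkContinuity} and uniqueness; and Lemma~\ref{LemmmaLip} with $(H)(ii)$ and Gronwall's inequality gives $|y^h(s)-\overline{x}(s)|\leq M|h|$.

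The heart of the argument is the nonvanishing case. Set $w(s)=y^h(s)-\overline{x}(s)$ and $\xi(s)=-\dot{\overline{p}}(s)\in\partial_x^-H(\overline{x}(s),\overline{p}(s))$, and differentiate $s\mapsto\langle\overline{p}(s),w(s)\rangle$. Because $\dot y^h=\nabla_pH(y^h,\overline{p})$ and $\dot{\overline{x}}=\nabla_pH(\overline{x},\overline{p})$, the Euler identity \eqref{derive} converts $\langle\overline{p},\dot w\rangle$ into $H(y^h(s),\overline{p}(s))-H(\overline{x}(s),\overline{p}(s))$, so that
\begin{equation*}
\frac{d}{ds}\langle\overline{p}(s),w(s)\rangle=-\langle\xi(s),w(s)\rangle+H(y^h(s),\overline{p}(s))-H(\overline{x}(s),\overline{p}(s)).
\end{equation*}
The one-sided estimate furnished by the semiconvexity of $H(\cdot,\overline{p}(s))$ (Proposition~\ref{aug16}, subdifferential form, with constant $\kappa$ proportional to $\sup_s|\overline{p}(s)|$), applied at $\overline{x}(s)$ with subgradient $\xi(s)$, gives $H(y^h(s),\overline{p}(s))-H(\overline{x}(s),\overline{p}(s))\geq\langle\xi(s),w(s)\rangle-\kappa|w(s)|^2$; the two $\langle\xi(s),w(s)\rangle$ terms cancel and $\frac{d}{ds}\langle\overline{p}(s),w(s)\rangle\geq-\kappa|w(s)|^2$. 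Integrating over $[t_0,t]$, using $w(t)=h$ and $|w(s)|\leq M|h|$, yields $\langle-\overline{p}(t_0),w(t_0)\rangle\geq\langle-\overline{p}(t),h\rangle-\kappa M^2(T-t_0)|h|^2$. I expect getting this inequality in the right direction to be the main obstacle: a generic competitor would only give $\langle\overline{p},\dot y^h\rangle\leq H$, the wrong sense, which is precisely why the characteristic flow — for which the maximum principle holds with equality — must be used.

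Collecting the pieces, for $|h|\leq r:=\rho_0/M$,
\begin{equation*}
V(t,\overline{x}(t)+h)-V(t,\overline{x}(t))\geq\langle-\overline{p}(t_0),w(t_0)\rangle-c_0|w(t_0)|^2\geq\langle-\overline{p}(t),h\rangle-\big(\kappa M^2(T-t_0)+c_0M^2\big)|h|^2,
\end{equation*}
which is \eqref{sub_prox_eq} with $c:=\kappa M^2(T-t_0)+c_0M^2$; in the vanishing case the same conclusion holds with $\kappa=0$. Finally, \eqref{sub_prox_eq} is exactly the defining inequality of a proximal subgradient, so $-\overline{p}(t)\in\partial_x^{-,pr}V(t,\overline{x}(t))$; and rewriting $-c|h|^2=\tfrac12\langle(-2cI_n)h,h\rangle$ exhibits $(-\overline{p}(t),-2cI_n)$ as an element of $J^{2,-}_xV(t,\overline{x}(t))$, which is the asserted second-order relation up to the value of the constant.
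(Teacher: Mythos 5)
Your proposal is correct and follows essentially the same route as the paper's own proof: the same case split via Remark~\ref{RemarkDualArc}, the same backward characteristic flow $\dot y=\nabla_pH(y,\overline{p})$ (resp.\ Filippov's theorem when $\overline{p}\equiv 0$) to build the competitor, and the same combination of the dynamic programming principle, the Euler identity \eqref{derive}, and the semiconvexity of $H(\cdot,p)$ to transport the proximal inequality from $t_0$ to $t$. The only differences are presentational — you differentiate $\langle\overline{p}(s),w(s)\rangle$ and observe the cancellation directly, where the paper writes the same computation as an integral of a derivative — and your closing remark about the constant in the subjet inclusion ($-2cI_n$ versus $-cI_n$) is a harmless renaming of constants.
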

\begin{proof}
First recall that, by Remark~\ref{RemarkDualArc}, we can distinguish two cases:
\begin{enumerate}
\item[(i)] either $\overline{p}(t)\neq 0$ for all $t\in [t_0,T]$,
\item[(ii)] or $\overline{p}(t)=0$ for all $t\in [t_0,T]$.
\end{enumerate}
We shall analyze each of the above situations separately. Suppose first to be in case (i). Since  $-\overline{p}(t_0)\in \partial_{x}^{-,pr}V(t_0,x_0)$, there exist $c_0,r_0>0$ such that for every $h \in B(0,r_0)$,
\begin{equation}\label{Step1}
V(t_0,x_0+h)-V(t_0,x_0)  \geq - \langle \overline{p}(t_0), h \rangle -c_0 |h|^2.
\end{equation}
Fix $t\in( t_0,T]$. Recall that $\overline{x}(\cdot)$ is the unique solution of the final value problem
\begin{equation}\label{comealsolito}
\left\{
\begin{array}{l}
\dot{x}(s)= \nabla_p H(x(s),\overline{p}(s))\quad \mbox{ for all } s\in \left[ t_0, t \right],\\
x(t)=\overline{x}(t).
\end{array}\right.
\end{equation}
For all $h \in B$, let $x_h:[t_0,t] \rightarrow \mathbb{R}^n$ be
the solution of the system
\begin{equation*}\label{Vari}
\left\{
\begin{array}{l}
\dot{x}(s)= \nabla_p H(x(s),\overline{p}(s))\quad \mbox{ for all } s\in \left[ t_0, t \right],\\
x(t)=\overline{x}(t)+h.
\end{array}\right.
\end{equation*}
From  the optimality of $\overline{x}(\cdot)$, and the dynamic
programming principle we deduce that
\begin{equation}\label{Mah}
\begin{split}
V(t,\overline{x}(t)+h)-V(t,&\overline{x}(t))+\langle \overline{p}(t),h \rangle =
V(t,x_h(t))-V(t_0,x_0)+ \langle \overline{p}(t),h \rangle \\
&\geq V(t_0,x_h(t_0))-V(t_0,x_0)+ \langle \overline{p}(t),h \rangle .
\end{split}
\end{equation}
Observe that, since $F$ has a sublinear growth and $(H)(ii)$ holds
true, by a standard argument based on Gronwall's lemma it follows that for some constant $k$,
independent of $t\in (t_0,T]$,
\begin{equation}\label{StimaVecchia}
\| x_h- \overline{x} \|_{\infty}  \leq e^{k T} \mid h \mid, \quad
\forall~ h \in B .
\end{equation}
Setting $r := \min \lbrace 1, r_0 e^{-k T}\rbrace$, from
\eqref{Step1}, \eqref{Mah}, and \eqref{StimaVecchia} we deduce
that there exists a constant $c_1$ such that, for all $h\in
B(0,r)$,
\begin{equation}\label{Bohhhh}
V(t,\overline{x}(t)+h)-V(t,\overline{x}(t))+ \langle \overline{p}(t),h \rangle
\geq -\langle \overline{p}(t_0),x_h(t_0)-x_0\rangle +\langle \overline{p}(t),h \rangle- c_1 \mid h \mid^2.
\end{equation}
Now, we estimate the sum of the inner products in the right side of
\eqref{Bohhhh}. Thanks to (\ref{derive}) we get
\[
- \langle \overline{p}(t_0),x_h(t_0)- x_0\rangle+ \langle \overline{p}(t),h\rangle = \int_{t_0}^t \frac{d}{d s} \langle \overline{p}(s), x_h(s)-\overline{x}(s)\rangle\ ds
\]
\[
= \int_{t_0}^t  \langle \dot{\overline{p}}(s), x_h(s)-\overline{x}(s)\rangle + \langle \overline{p}(s), \dot{x}_h(s)-\dot{\overline{x}}(s)\rangle \ d s=
\]
\[
= \int_{t_0}^t  \langle \dot{\overline{p}}(s), x_h(s)- \overline{x}(s)\rangle + H(x_h(s),\overline{p}(s))-H(\overline{x}(s),\overline{p}(s)) \ d s.
\]
Since $-\dot{\overline{p}}(s)\in \partial_x^-
H(\overline{x}(s),\overline{p}(s))$ a.e. in $[t_0,T]$, assumption
(H)$\,(i)$ implies that
\begin{equation}\label{concluu}
- \langle \overline{p}(t_0),x_h(t_0)- x_0\rangle+ \langle \overline{p}(t),h\rangle \geq -c_2 \int_{t_0}^t  \mid \overline{p}(s)\mid \mid x_h(s)- \overline{x}(s)\mid^2,
\end{equation}
where $c_{2}$ is a suitable constant independent from $t \in [t_0,T]$. From \eqref{StimaVecchia} - \eqref{concluu} we conclude that there exists a constant $c$, independent of $t\in [t_0,T]$, such that \eqref{sub_prox_eq} holds true for all $h\in B(0,r)$. So, the claim is proved in case (i).

Next, suppose  to be in case (ii), that is $\overline{p}(t)=0$ for
all $t \in [t_0,T]$. Hence  $0\in
\partial_{x}^{-,pr}V(t_0,x_0)$ and there exist two constants
$r_0>0, c_0>0$ such that, for all $h\in B(0, r_0)$,
\begin{equation}\label{Step3}
V(t_0,x_0+h)-V(t_0,x_0) \geq  -c_0 |h|^2.
\end{equation}
Fix a time $t\in (t_0,T]$. By Filippov's Theorem (see, e.g.,  in \cite[Theorem 10.4.1]{MR1048347}), there exists $c_1>0$ independent
 of $t\in(t_0,T]$ such that, for any $h \in B$, the final value problem
\begin{equation}\label{lastAga2}
\left\lbrace
\begin{array}{l}
\dot{x}(s)\in F(x(s))\ \textit{ for a.e. } s\in[t_0,t],\\
x(t)=\overline{x}(t)+h.
\end{array}
\right.
\end{equation}
has a solution, $x_h(\cdot)$, that satisfies the inequality
\begin{equation}\label{Gr}
\parallel x_h - \overline{x} \parallel_{\infty} \leq c_1 \mid h \mid.
\end{equation}
Define  $\overline{r}=\min \lbrace 1, r_0/c_1 \rbrace$. By
\eqref{Step3} -  \eqref{Gr}, the dynamic programming principle,
and the optimality of $\overline{x}(\cdot)$ we deduce that for all
$h \in B(0,\overline{r})$
\begin{equation*}
V(t,\overline{x}(t)+h)- V(t,\overline{x}(t))\geq V(t_0,x_h(t_0))- V(t_0,x_0)\geq -c_0 c_1 ^2 \mid h\mid^2.
\end{equation*}
The proof is complete also in case (ii).
\end{proof}
The method of the above proof can be easily adapted to get a
similar conclusion for the Fr\'echet subdifferential of
$V(t,\cdot)$, as well.
\begin{theorem}\label{PropSubNonProx}
Assume (SH), (H), and suppose $\phi: \mathbb{R}^n \rightarrow
\mathbb{R}$ is locally Lipschitz. Let
$\overline{x}:[t_0,T]\rightarrow \mathbb{R}^n$ be optimal for
$\mathcal{P}(t_0,x_0)$ and let $\overline{p}:[t_0,T]\rightarrow
\mathbb{R}^n$ be an arc such that $(\overline{x},\overline{p})$ is
a solution of the system
\begin{equation}\label{sub_nonprox}
\left\{\begin{array}{rllrrl}
\dot{x}(s) &\in& \partial^-_p H(x(s),p(s)), & x(t_0)&=&x_0\\
-\dot{p}(s) &\in& \partial_x^- H(x(s),p(s)), & -p(t_0)&\in& \partial_{x}^{-} V(t_0,x_0)
\end{array}\right.
\mbox{for a.e. }\ s\in \left[ t_0, T \right].
\end{equation}
Then,
\begin{equation}\label{inclus_nonprox}
-\overline{p}(t)\in \partial_x^{-}V(t,\overline{x}(t))\quad \mbox{ for all }t \in [t_0,T].
\end{equation}
\end{theorem}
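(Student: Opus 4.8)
The plan is to follow the structure of the proof of Theorem~\ref{Lemma_sub_prox} line by line, replacing throughout the quadratic proximal lower bound by the first-order Fr\'echet lower bound. As in that proof, I would invoke Remark~\ref{RemarkDualArc} to split the analysis into the two alternatives: either $\overline{p}(t)\neq 0$ for all $t\in[t_0,T]$, or $\overline{p}(t)\equiv 0$. The point is that every quadratic term produced by the old argument is automatically $o(|h|)$, so it contributes harmlessly to the Fr\'echet estimate, and only the initial first-order term changes form.

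Consider first the case $\overline{p}(t)\neq 0$. Since $-\overline{p}(t_0)\in\partial_x^- V(t_0,x_0)$, the definition of the Fr\'echet subdifferential gives $V(t_0,x_0+w)-V(t_0,x_0)\geq -\langle\overline{p}(t_0),w\rangle-o(|w|)$ as $w\to 0$. I would fix $t\in(t_0,T]$ and, for $h$ small, let $x_h$ be the solution on $[t_0,t]$ of the backward Cauchy problem $\dot{x}(s)=\nabla_p H(x(s),\overline{p}(s))$, $x(t)=\overline{x}(t)+h$, exactly as in \eqref{comealsolito}. The dynamic programming principle together with the optimality of $\overline{x}$ yields, precisely as in \eqref{Mah}, the inequality $V(t,\overline{x}(t)+h)-V(t,\overline{x}(t))+\langle\overline{p}(t),h\rangle \geq V(t_0,x_h(t_0))-V(t_0,x_0)+\langle\overline{p}(t),h\rangle$. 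Applying the Fr\'echet estimate at $t_0$ with $w=x_h(t_0)-x_0$, and then carrying out the same integration by parts as in Theorem~\ref{Lemma_sub_prox}---which uses \eqref{derive} together with the semiconvexity bound (H)(i) to bound $-\langle\overline{p}(t_0),x_h(t_0)-x_0\rangle+\langle\overline{p}(t),h\rangle$ from below by $-c_2\int_{t_0}^t|\overline{p}(s)||x_h(s)-\overline{x}(s)|^2\,ds$---I would arrive at
\[
V(t,\overline{x}(t)+h)-V(t,\overline{x}(t))+\langle\overline{p}(t),h\rangle \geq -o(|x_h(t_0)-x_0|)-c_2\int_{t_0}^t|\overline{p}(s)|\,|x_h(s)-\overline{x}(s)|^2\,ds.
\]

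The only genuinely new point, and the step I expect to require some care, is to check that the whole right-hand side is $o(|h|)$ as $h\to 0$, which is exactly the inclusion $-\overline{p}(t)\in\partial_x^- V(t,\overline{x}(t))$. The Gronwall estimate \eqref{StimaVecchia} gives $\|x_h-\overline{x}\|_\infty\leq e^{kT}|h|$, so the integral term is $O(|h|^2)=o(|h|)$; and since $|x_h(t_0)-x_0|\leq e^{kT}|h|\to 0$, the term $o(|x_h(t_0)-x_0|)$ is again $o(|h|)$, because $o(|x_h(t_0)-x_0|)/|h|=\big(o(|x_h(t_0)-x_0|)/|x_h(t_0)-x_0|\big)\cdot\big(|x_h(t_0)-x_0|/|h|\big)$, where the first factor tends to $0$ and the second is bounded by $e^{kT}$. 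In contrast to Theorem~\ref{Lemma_sub_prox}, no uniformity in $t$ is needed here, since the conclusion is a pointwise inclusion for each fixed $t$, which is what makes the passage from the quadratic to the first-order bound painless. Finally, in the case $\overline{p}\equiv 0$ the argument simplifies: one has $0\in\partial_x^- V(t_0,x_0)$, and using Filippov's theorem to construct a solution $x_h$ of $\dot x\in F(x)$ with $x_h(t)=\overline{x}(t)+h$ and $\|x_h-\overline{x}\|_\infty\leq c_1|h|$, the dynamic programming principle and optimality give $V(t,\overline{x}(t)+h)-V(t,\overline{x}(t))\geq V(t_0,x_h(t_0))-V(t_0,x_0)\geq -o(|x_h(t_0)-x_0|)=-o(|h|)$, whence $0=-\overline{p}(t)\in\partial_x^- V(t,\overline{x}(t))$.
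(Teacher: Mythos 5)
Your proposal is correct and is precisely the adaptation the paper has in mind: the paper offers no separate written proof of Theorem~\ref{PropSubNonProx}, stating only that the method of Theorem~\ref{Lemma_sub_prox} ``can be easily adapted,'' and your argument carries out that adaptation faithfully (same case split via Remark~\ref{RemarkDualArc}, same auxiliary trajectories $x_h$, same dynamic programming and integration-by-parts estimates). Your added observation that the quadratic remainders are automatically $o(|h|)$ and that the composed term $o(|x_h(t_0)-x_0|)$ is $o(|h|)$ via the Gronwall bound is exactly the point that makes the adaptation work.
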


\begin{corollary}\label{Differentiability}
Assume (SH), (H) and suppose  $\phi: \mathbb{R}^n \rightarrow
\mathbb{R}$ is locally Lipschtiz. Let $\overline{x}(\cdot)$ be
optimal  for $\mathcal{P}(t_0,x_0)$, and suppose that
$\partial^+\phi(\overline{x}(T))\neq \emptyset$. Let
$\overline{p}(\cdot)$ be an arc such that
$(\overline{x},\overline{p})$ is a solution of
\begin{equation}\label{superHJ}
\left\{\begin{array}{rllrrl}
\dot{x}(s) &\in& \partial^-_p H(x(s),p(s)), & x(T)&=&\overline{x}(T)\\
-\dot{p}(s) &\in& \partial_x^- H(x(s),p(s)), & -p(T)&\in& \partial^+  \phi(\overline{x}(T))
\end{array}\right.
\mbox{for a.e. }\ s\in \left[ t_0, T \right].
\end{equation}
Then the following statements hold true:
\begin{itemize}
\item if $\partial^-_xV(t_0,x_0)  \neq \emptyset$, then
$V(t,\cdot)$ is differentiable at $\overline{x}(t)$ with $\nabla_x
V(t,\overline{x}(t))= -\overline{p}(t)$ for all $ t \in [t_0,T]$,
\item if $\phi$ is locally semiconcave and $\partial^-_xV(t_0,x_0)
\neq \emptyset$, then $V(\cdot,\cdot)$ is differentiable at
$(t,\overline{x}(t))$ with  $\nabla V(t,\overline{x}(t))=
\left(H(\overline{x}(t),\overline{p}(t)),-\overline{p}(t)\right)$
 for all $ t \in [t_0,T)$.
\end{itemize}
\end{corollary}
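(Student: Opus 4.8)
The plan is to prove the two assertions in turn: first I would obtain differentiability of $V(t,\cdot)$ in $x$ by squeezing $-\overline{p}(t)$ between the sub- and the superdifferential, and then upgrade to joint differentiability in $(t,x)$ by a reachable-gradient argument based on the Hamilton--Jacobi equation \eqref{HJB}.

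For the first item I would combine the forward propagation of the subdifferential (Theorem \ref{PropSubNonProx}) with the backward propagation of the Fr\'echet superdifferential established in \cite{Nostro}. Since $V(T,\cdot)=\phi$, the transversality condition $-\overline{p}(T)\in\partial^+\phi(\overline{x}(T))$ reads $-\overline{p}(T)\in\partial^+_xV(T,\overline{x}(T))$; as $(\overline{x},\overline{p})$ solves the Hamiltonian system, the backward superdifferential propagation of \cite{Nostro} yields
\[
-\overline{p}(t)\in\partial^+_xV(t,\overline{x}(t))\qquad\text{for all }t\in[t_0,T].
\]
Evaluating at $t_0$ and using the hypothesis $\partial^-_xV(t_0,x_0)\neq\emptyset$, the elementary fact that a function having both a nonempty sub- and a nonempty superdifferential at a point is differentiable there shows that $V(t_0,\cdot)$ is differentiable at $x_0$ with $\nabla_xV(t_0,x_0)=-\overline{p}(t_0)\in\partial^-_xV(t_0,x_0)$. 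This verifies the hypothesis of Theorem \ref{PropSubNonProx}, which then gives $-\overline{p}(t)\in\partial^-_xV(t,\overline{x}(t))$ for all $t\in[t_0,T]$ as well. Since $-\overline{p}(t)$ now lies in both $\partial^-_xV(t,\overline{x}(t))$ and $\partial^+_xV(t,\overline{x}(t))$, the same fact shows $V(t,\cdot)$ is differentiable at $\overline{x}(t)$ with $\nabla_xV(t,\overline{x}(t))=-\overline{p}(t)$ for every $t\in[t_0,T]$.

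For the second item I would exploit that, when $\phi$ is locally semiconcave, $V$ is locally semiconcave jointly in $(t,x)$ (as recalled in Section 2, following \cite{MR2728465}). Fix $t\in[t_0,T)$ and set $x_*=\overline{x}(t)$, $p_*=\overline{p}(t)$. By the first item $V(t,\cdot)$ is differentiable at $x_*$, so $\partial^+_xV(t,x_*)=\{-p_*\}$. By Proposition \ref{aug16} the superdifferential $\partial^+V(t,x_*)$ in the variables $(t,x)$ is nonempty and equals the convex hull of the reachable gradients $\partial^*V(t,x_*)$. Any $(q_t,q_x)\in\partial^+V(t,x_*)$ restricts, upon freezing the time variable, to an element of $\partial^+_xV(t,x_*)$, forcing $q_x=-p_*$; in particular every reachable gradient has $x$-component $-p_*$. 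On the other hand, each reachable gradient is a limit $\lim_j\nabla V(t_j,x_j)$ along differentiability points $(t_j,x_j)\to(t,x_*)$; since $t<T$ we may assume $t_j<T$, and as $V$ solves \eqref{HJB} classically wherever it is differentiable, $V_t(t_j,x_j)=H(x_j,-\nabla_xV(t_j,x_j))$. Passing to the limit and using $q_x=-p_*$ together with the continuity of $H$ gives $q_t=H(x_*,p_*)$ for every reachable gradient. Hence $\partial^*V(t,x_*)$ reduces to the single point $(H(\overline{x}(t),\overline{p}(t)),-\overline{p}(t))$, and so does its convex hull $\partial^+V(t,x_*)$; by Proposition \ref{aug16}(3), $V$ is jointly differentiable at $(t,\overline{x}(t))$ with the asserted gradient.

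The main obstacle is the determination of the time-component $q_t$, and specifically making the argument work at the left endpoint $t_0$, where the optimal trajectory offers only one-sided comparison. A direct route through the dynamic programming principle --- using that $s\mapsto V(s,\overline{x}(s))$ is constant and that $\overline{x}$ is $C^1$ when $\overline{p}\neq 0$ (Remark \ref{RemarkContinuity}), so that $q_t+\langle q_x,\dot{\overline{x}}(t)\rangle=0$ and hence $q_t=\langle\overline{p}(t),\dot{\overline{x}}(t)\rangle=H(\overline{x}(t),\overline{p}(t))$ --- succeeds only for interior $t$, since at $t_0$ it produces a single inequality. The reachable-gradient argument above sidesteps this entirely and treats all $t\in[t_0,T)$ uniformly; the exclusion of $t=T$ is genuine, as there the time variable sits on the boundary of the domain of $V$ and \eqref{HJB} need not hold. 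Finally one should record the degenerate alternative of Remark \ref{RemarkDualArc}: if $\overline{p}\equiv 0$, then $-\overline{p}(t)=0$ and $H(\overline{x}(t),0)=0$, so the claimed gradient is $(0,0)$ and every step goes through verbatim.
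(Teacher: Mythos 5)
Your proof is correct and takes essentially the same route as the paper: the first item is proved exactly as in the paper (backward superdifferential propagation from \cite{Nostro}, differentiability at $x_0$ by the sub/super squeeze, then Theorem \ref{PropSubNonProx} forward, and squeeze again along the trajectory). For the second item the paper omits the argument, referring to \cite[proof of Corollary~7.3.5]{MR2041617}, and your semiconcavity/reachable-gradient/HJB argument is precisely that standard approach, so you have merely filled in the details the paper leaves out.
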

\begin{proof}
First, note that the existence of $\overline{p}$ such that
$(\overline{x},\overline{p})$ is a solution of \eqref{superHJ} is
guaranteed by
 Remark \ref{UltimoRemark}. From \cite[Theorem~3.4]{Nostro} we deduce that
\begin{equation}\label{super_nonprox}
-\overline{p}(t)\in \partial_{x}^{+}V(t,\overline{x}(t))\quad \mbox{ for all } t \in [t_0,T].
\end{equation}
Hence $V(t_0,\cdot)$ is differentiable at $x_0$. In particular,
$-\overline{p}(t_0)\in \partial^-_x V(t_0,x_0)$.
 Then, Theorem \ref{PropSubNonProx} ensures that \eqref{inclus_nonprox} holds true on $[t_0,T]$. The first statement of this corollary follows
 from \eqref{inclus_nonprox} together with \eqref{super_nonprox}.

The proof of the second one uses the same approach as \cite[proof
of Corollary~7.3.5]{MR2041617} and other standard arguments, so it
is omitted.
\end{proof}
Recall that the nondegeneracy condition $\partial^+\phi
(\overline{x}(T))\neq \emptyset$ is always satisfied when
$\phi$ is locally semiconcave or differentiable, and in both cases
$\overline{p}$ is a dual arc associated to $\overline{x}$. Let us
complete the analysis of  first-order sensitivity relations
with a result about  reachable gradients.
\begin{proposition}\label{TheoReach}
Assume (SH), (H) and suppose $\phi$ is locally Lipschitz and such
that $\partial^+\phi(z)\neq \emptyset$ for all $z\in\mathbb{R}^n$.
Then, for any  $(t,x)\in (-\infty,T]\times \mathbb{R}^n$ and
$\overline{q} \in
\partial^{*}_x V(t,x)\setminus \lbrace 0 \rbrace$, there exists a
solution  $(y(\cdot),p(\cdot))$ of
\begin{equation}\label{NewHamilt}
\left\lbrace
\begin{array}{rlll}
\dot{y}(s)& =  &\nabla_p H(y(s),p(s)),& y(t)=x \\
-\dot{p}(s)&\in &\partial_x^- H(y(s),p(s),& p(t)=-\overline{q}
\end{array}\right.
\mbox{for a.e. }s\in [t,T],
\end{equation}
such that $y(\cdot)$ is optimal for $\mathcal{P}(t,x)$ and the
following inclusion holds
\begin{equation}\label{reachable}
-p(s)\in \partial^{*}_x V(s,y(s))\quad \mbox{ for all }s\in [t,T].
\end{equation}
\end{proposition}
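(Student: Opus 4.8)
The plan is to realize $\overline q$ as a limit of gradients of $V(t,\cdot)$ at nearby points, to attach to each such point an optimal trajectory whose dual arc starts exactly at that gradient, and then to pass to the limit. First I would use the definition of reachable gradient to fix a sequence $x_j\to x$ such that $V(t,\cdot)$ is differentiable at $x_j$ and $q_j:=\nabla_x V(t,x_j)\to\overline q$. Since $\overline q\neq 0$, after discarding finitely many indices we may assume $q_j\neq 0$ for every $j$.

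For each $j$, let $y_j$ be an optimal trajectory for $\mathcal P(t,x_j)$, which exists because $\phi$ is continuous. Using $\partial^+\phi(y_j(T))\neq\emptyset$ together with Remark~\ref{UltimoRemark}, I would produce a dual arc $p_j$ with $(y_j,p_j)$ solving \eqref{HI2} and $-p_j(T)\in\partial^+\phi(y_j(T))$. The superdifferential sensitivity relation of \cite[Theorem~3.4]{Nostro} then yields $-p_j(s)\in\partial^+_x V(s,y_j(s))$ for all $s\in[t,T]$; evaluating at $s=t$ and using that $\partial^+_x V(t,x_j)=\{q_j\}$, because $V(t,\cdot)$ is differentiable at $x_j$, forces $-p_j(t)=q_j\neq 0$. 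By Remark~\ref{RemarkDualArc} the arc $p_j$ never vanishes, so $\dot y_j=\nabla_p H(y_j,p_j)$ and $y_j\in C^1$. Moreover $-p_j(t)=q_j=\nabla_x V(t,x_j)\in\partial^-_x V(t,x_j)$, so Theorem~\ref{PropSubNonProx} gives $-p_j(s)\in\partial^-_x V(s,y_j(s))$ for all $s$; combined with the superdifferential inclusion this shows that $V(s,\cdot)$ is differentiable at $y_j(s)$ with $\nabla_x V(s,y_j(s))=-p_j(s)$ for every $s\in[t,T]$.

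Next I would extract a convergent subsequence. Choose $r>0$ with all $y_j\subset B(0,r)$ (uniform boundedness follows from $(SH)(iii)$ and Gronwall); then $|\dot y_j|$ is bounded since $\nabla_p H(y_j,p_j)\in F(y_j)$, while $|\dot p_j|\le c_r|p_j|$ by \eqref{Impoooo}, so $p_j$ is uniformly bounded and equi-Lipschitz by Gronwall. Arzel\`a--Ascoli and the standard closure theorem for differential inclusions then give, along a subsequence, $y_j\to y$ and $p_j\to p$ uniformly on $[t,T]$ with $(y,p)$ satisfying $-\dot p\in\partial^-_x H(y,p)$, $y(t)=x$, $p(t)=-\overline q$; here one uses the upper semicontinuity and convexity of $(x,p)\mapsto\partial_x^- H(x,p)$ coming from $(H)$. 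Since $p(t)=-\overline q\neq 0$ and $|\dot p|\le c_r|p|$, Gronwall keeps $p(s)\neq 0$ on $[t,T]$, so by the joint continuity of $\nabla_p H$ on $\mathbb{R}^n\times(\mathbb{R}^n\setminus\{0\})$ (convexity in $p$ plus $(H)(ii)$) we may pass to the limit in $\dot y_j=\nabla_p H(y_j,p_j)$ to get $\dot y=\nabla_p H(y,p)$. Thus $(y,p)$ solves \eqref{NewHamilt}. Optimality of $y$ for $\mathcal P(t,x)$ follows from $V(t,x_j)=\phi(y_j(T))$, the continuity of $V$ and $\phi$, and $y_j(T)\to y(T)$, which yield $V(t,x)=\phi(y(T))$.

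Finally, for \eqref{reachable}, fix $s\in[t,T]$: along the chosen subsequence $y_j(s)\to y(s)$, the function $V(s,\cdot)$ is differentiable at each $y_j(s)$, and $\nabla_x V(s,y_j(s))=-p_j(s)\to -p(s)$; hence $-p(s)\in\partial^*_x V(s,y(s))$ directly from the definition of reachable gradient. The main obstacle is the limiting argument of the third paragraph: making the closure theorem apply to the adjoint inclusion (upper semicontinuity and convexity of $\partial_x^- H$) and, crucially, ensuring that the limiting covector $p$ stays away from $0$ so that the velocity equation $\dot y=\nabla_p H(y,p)$ survives in the limit.
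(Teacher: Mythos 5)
Your proposal is correct and takes essentially the same route as the paper's proof: approximate $\overline q$ by gradients $\nabla_x V(t,x_j)$ at differentiability points, attach optimal trajectories and dual arcs that coincide with $-\nabla_x V$ along those trajectories via the first-order sensitivity results, extract a uniformly convergent subsequence, and read off \eqref{reachable} from the definition of reachable gradient. The only (cosmetic) difference is that you re-derive Corollary~\ref{Differentiability} from its two ingredients and pass to the limit in the two Hamiltonian equations separately, using upper semicontinuity of $\partial_x^- H$ and continuity of $\nabla_p H$ away from $p=0$, whereas the paper invokes Corollary~\ref{Differentiability} directly and handles the limit through the upper semicontinuity of $\partial H$ combined with the splitting Lemma~\ref{le:splitting}.
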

\begin{proof}
Take a sequence $x_k$ converging to $x$ such that $V(t,\cdot)$ is
differentiable at $x_k$ and $ \nabla_x V(t,x_k)$  converge to $
\overline{q}. $ Let $y_k(\cdot)$ be an optimal trajectory for
$\mathcal{P}(t,x_k)$. Then for every $k$ there exists a dual arc
$p_k$ such that the pair $(y_k,p_k)$ is a solution of the system
\begin{equation}\label{NewHamilt2}
\left\lbrace
\begin{array}{rlllrl}
\dot{x}_k(s)&\in &\partial_p^- H(x_k(s),p_k(s)), & \quad x_k(t)&=&x_k, \\
-\dot{p}_k(s)&\in &\partial_x^- H(x_k(s),p_k(s)), & \quad
-p_k(T)&\in& \partial^+ \phi (x_k(T))
\end{array}\right. \mbox{for  a.e.  } s \in [t,T].
\end{equation}
 By Corollary \ref{Differentiability}, $p_k(t)=-  \nabla_x V(t,x_k) $.  Hence, the sequence
$\{(y_k(\cdot),p_k (\cdot))\}$, after possibly passing to a
subsequence, converges uniformly to a pair of arcs
$(y(\cdot),p(\cdot))$ with $p$ nonvanishing on $[t,T]$.  Since the
set-valued map $\partial H$  is upper semicontinuous, we deduce
from Lemma \ref{le:splitting} that $(y(\cdot),p(\cdot))$ is, a
solution of \eqref{NewHamilt}. Moreover, $y$ is optimal for
$\mathcal{P}(t,x)$. In view of Corollary \ref{Differentiability},
for all $s\in[t,T]$
\begin{equation}\label{sisi}
p(s)=\lim_{k\rightarrow \infty} p_k(s)= - \lim_{k\rightarrow \infty} \nabla_x V(s,y_k(s)).
\end{equation}
Summarizing, for all $s\in [t,T]$ the sequence $y_k(s)$ converges
to $y(s)$ as $k\rightarrow \infty$, $V(s,\cdot)$ is differentiable
at $y_k(s)$ and $\nabla_x V(s,y_k(s)) \rightarrow p(s)$ as
$k\rightarrow \infty$. This ends the proof.
\end{proof}
\begin{remark}
If $\phi$ is locally semiconcave, then for any
$(\overline{q}_t,\overline{q})\in
\partial^* V(t,x) \backslash \{0\}$ there exists a pair $(y,p)$ such that $y$ is
optimal for $\mathcal{P}(t,x)$, and $(y,p)$ satisfies
\eqref{NewHamilt} - \eqref{reachable} on $(t,T]$, and for all
$s\in [t,T)$ it holds that
\begin{equation}\label{newCOnd}
\left( H(y(s),p(s)),-p(s) \right) \in \partial^* V(s,y(s)).
\end{equation}
Let us briefly justify \eqref{newCOnd}. Let $(t_k,x_k)$ be a sequence converging to $(t,x)$ such that $V$ is differentiable at $(t_k,x_k)$, and
 $\nabla V(t_k,x_k)$ converges to $(\overline{q}_t,\overline{q})$. If $y_k$ and $p_k$ are an optimal trajectory and an associated dual arc for the problem
 $\mathcal{P}(t_k,x_k)$, then, by Corollary \ref{Differentiability},  $\left(
H(y_k(s),p_k(s)), - p_k(s)\right)= \nabla V (s,y_k(s))$ for all
$s\in [t_k,T)$.  Taking a subsequence of $\{(y_k,p_k)\}$
converging to a solution $(y,p)$ of \eqref{NewHamilt}, we deduce
that \eqref{reachable}  holds true on $(t,T]$  and \eqref{newCOnd}
is satisfied for all $s\in (t,T)$. Moreover, since the reachable
set of $V$ is upper semicontinuous, \eqref{newCOnd} holds true on
$[t,T)$.
 \end{remark}

\subsection{Second-order sensitivity relations}
As recalled in Section $2.3$, it is well known that, when $H$ and
$\phi$ are sufficiently smooth, the twice differentiability of the
value function propagates along every optimal trajectory backward
in time as long as a conjugate time does not occur, and
$$(\nabla_x V(t,x(t)),\nabla_{xx}^2 V(t,x(t))=(-p(t),-R(t)),$$
 where $p$ is the dual arc associated to $x$ and $R$ is the solution of a Riccati equation.
  Here we prove that a pair $(-p,-R)$ propagates, as long as $R$ is well
  defined, in the set of all superjets or subjets of $V(t,\cdot)$ without assuming the twice differentiability of $V(t,\cdot)$. Let us start with  backward propagation of superjets.
To simplify the notation, for a fixed pair $(\overline{x}(\cdot),\overline{p}(\cdot))$ set, from now on,
 $H_{px}[t]:=\nabla^2_{px} H (\overline{x}(t),\overline{p}(t)),$
and let $H_{xp}[t], H_{pp}[t], H_{xx}[t]$ be defined analogously.
\begin{theorem}\label{Theo_SuperJet}
Assume (SH),  and suppose $H \in C^2(\mathbb{R}^n\times ( \mathbb{R}^n\smallsetminus \lbrace 0\rbrace ))$ and $\phi: \mathbb{R}^n \rightarrow \mathbb{R}$ is
 locally Lipschitz. Consider an optimal solution  $\overline{x}(\cdot)$ for  $\mathcal{P}(t_0,x_0)$ and suppose that there exists
   $(q,Q)\in J^{2,+}\phi(\overline{x}(T))$ with $ q\neq 0$. 
Let $\overline{p}(\cdot)$ be the dual arc associated with $\overline{x}(\cdot)$ such that $-\overline{p}(T)=q$, and let $R$ be the solution of the Riccati
equation
\begin{equation}\label{RiccatiInc}
\left\lbrace
\begin{array}{l}
\dot{R}(t)+ H_{px}[t]R(t)+R(t)H_{xp}[t] + R(t)H_{pp}[t]R(t) + H_{xx}[t]=0, \\
R(T)=-Q,
\end{array}
\right.
\end{equation}
defined on $[a,T]$ for some $a\in [t_0,T)$. Then the following second-order sensitivity relation holds true
\begin{equation}\label{Tesi}
(-\overline{p}(t),-R(t))\in J^{2,+}_x V(t,\overline{x}(t)) \quad \mbox{ for all } t\in [a,T].
\end{equation}
\end{theorem}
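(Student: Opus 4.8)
The plan is to reduce the nonsmooth claim to the smooth conjugate-point theory of Section~$2.4$ by replacing $\phi$ with a $C^2$ cost that touches it from above at $\overline x(T)$. Since $(q,Q)\in J^{2,+}\phi(\overline x(T))$, Proposition~\ref{Equiv} produces a function $\widetilde\phi\in C^2(\mathbb R^n;\mathbb R)$ with $\widetilde\phi\ge\phi$, $\widetilde\phi(\overline x(T))=\phi(\overline x(T))$, $\nabla\widetilde\phi(\overline x(T))=q$ and $\nabla^2\widetilde\phi(\overline x(T))=Q$. Let $\widetilde V$ be the value function of the Mayer problem with cost $\widetilde\phi$. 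Because $\widetilde\phi\ge\phi$ we have $\widetilde V\ge V$ everywhere, while the dynamic programming principle together with $\widetilde\phi(\overline x(T))=\phi(\overline x(T))$ forces $\widetilde V(t,\overline x(t))=V(t,\overline x(t))$ for every $t\in[t_0,T]$ and shows that $\overline x$ restricted to $[t,T]$ is still optimal for the $\widetilde\phi$-problem. Thus $\widetilde V(t,\cdot)$ touches $V(t,\cdot)$ from above at $\overline x(t)$, so by Proposition~\ref{Chiusura}(iii) it suffices to prove $(-\overline p(t),-R(t))\in J^{2,+}_x\widetilde V(t,\overline x(t))$ for all $t\in[a,T]$.

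For the smoothed problem the data are regular enough to enter the framework of Section~$2.4$. Since $q=-\overline p(T)\neq 0$, Remark~\ref{RemarkDualArc} gives $\overline p(t)\neq 0$ on $[t_0,T]$; as $H\in C^2(\mathbb R^n\times(\mathbb R^n\smallsetminus\{0\}))$ and $\partial_x^-H=\{\nabla_xH\}$ off $\{p=0\}$, the pair $(\overline x,\overline p)$ is exactly the characteristic \eqref{CJ*} for $\mathcal H=H$, $\phi=\widetilde\phi$, $z=z_0:=\overline x(T)$, and the variational and Riccati systems \eqref{CP_prem}--\eqref{RiccatiGiusta} reduce precisely to \eqref{RiccatiInc} with $R(T)=-Q=-\nabla^2\widetilde\phi(z_0)$. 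The hypothesis that $R$ be defined on $[a,T]$ is equivalent to the absence of a conjugate time in $[a,T]$, i.e. $\det X(t)\neq 0$ there, so $[a,T]\subset(t_c,T]$. I would then apply Theorem~\ref{TheoPntConju} with initial time $a$ and $\overline t=a$: condition~$2$ holds, hence condition~$1$ and the subsequent Remark give that $\widetilde V(t,\cdot)$ is of class $C^2$ in a neighbourhood of $\overline x(t)$, with $\nabla_x\widetilde V(t,\overline x(t))=-\Phi_t(\overline x(t))=-\overline p(t)$ and $-\nabla^2_{xx}\widetilde V(t,\overline x(t))=P(t)X(t)^{-1}=R(t)$, for every $t\in[a,T]$.

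To finish, a $C^2$ function always has its gradient--Hessian pair in its superjet, so $(-\overline p(t),-R(t))\in J^{2,+}_x\widetilde V(t,\overline x(t))$, and the sandwiching argument of the first paragraph promotes this, via Proposition~\ref{Chiusura}(iii), to $(-\overline p(t),-R(t))\in J^{2,+}_x V(t,\overline x(t))$, which is \eqref{Tesi}.

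The hard part will be the clean verification of the hypotheses of Theorem~\ref{TheoPntConju} for the smoothed data, and this is where I would spend the most care. Two points are delicate. First, one must check that for $\delta$ small the whole set $\bigcup_{t\in[a,T]}M_{t,\delta}(z_0)$ lies in the open region $\{p\neq 0\}$ where $H$ is $C^2$; this follows because $\overline p$ is continuous and nonvanishing on the compact interval $[a,T]$, hence bounded away from $0$, and the characteristics depend continuously on $z$. Second, and most importantly, the differentiability of $\widetilde V(t,\cdot)$ at $\overline x(t)$ is the input needed to invoke the identity $\Phi_t(\overline x(t))=-\nabla_x\widetilde V(t,\overline x(t))$ of the Remark following Theorem~\ref{TheoPntConju}. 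This differentiability is precisely the content of the no-conjugate-time field structure: the absence of conjugate points makes $z\mapsto x(t,z)$ a local $C^1$ diffeomorphism near $z_0$ (as $X(t)=\partial_z x(t,z_0)$ is invertible), so through each point near $\overline x(t)$ there passes a unique, locally optimal field extremal, and $\widetilde V(t,\cdot)$ coincides locally with $\widetilde\phi$ transported along these extremals, whence its $C^2$ regularity together with the stated first and second derivatives.
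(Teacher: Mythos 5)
Your proposal is correct and follows essentially the same route as the paper's own proof: replace $\phi$ by the $C^2$ touching function from Proposition~\ref{Equiv}, observe that $\overline{x}$ and $\overline{p}$ remain an optimal trajectory and dual arc for the smoothed Mayer problem whose value function dominates $V$ and agrees with it along $\overline{x}$, invoke Theorem~\ref{TheoPntConju} (no conjugate time on $[a,T]$ since $R$ exists there) to get $C^2$ regularity of the smoothed value function with gradient $-\overline{p}(t)$ and Hessian $-R(t)$, and then transfer the pair into $J^{2,+}_x V(t,\overline{x}(t))$ by the comparison property of superjets. The only cosmetic difference is that the paper closes with Proposition~\ref{Equiv} while you close with Proposition~\ref{Chiusura}(iii); these are interchangeable.
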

\begin{proof}
Since $(q,Q)\in J^{2,+}\phi(\overline{x}(T))$, by Proposition \ref{Equiv} there exists a function $g$ of class $C^2$ such that
$$ \phi \leq g,\quad g(\overline{x}(T))=\phi(\overline{x}(T)), \quad ( \nabla g(\overline{x}(T)),\nabla^2 g(\overline{x}(T)))=(q,Q).$$
Consider the Mayer problem of minimizing the final cost $g(x(T))$ over all trajectories $x(\cdot)$ of \eqref{May1}, \eqref{May2}, and its value function defined by
$$
W(t,z)=\inf \lbrace g(x(T)): x\mbox{ satisfies  \eqref{May1} a.e.
in }[t,T],~x(t)=z  \rbrace . $$ Since $\phi \leq g$, for all
$z\in\mathbb{R}^n$ and $t\in [t_0,T]$, $ W(t,z) \geq V(t,z). $
Moreover, since $g(\overline{x}(T))=\phi(\overline{x}(T))$, $
W(t,\overline{x}(t))=V(t,\overline{x}(t)). $ Thus the problem of
minimizing $g(y(T))$ over all admissible trajectories $y(\cdot)$
starting from $\overline{x}(t)$ at time $t$ has
$\overline{x}(\cdot)$ itself as optimal solution. Since $\nabla
g(\overline{x}(T))=q$,  $\overline{p}(\cdot)$ is the dual arc
associated with $\overline{x}(\cdot)$ for this new Mayer problem.
These facts, together with the equality $Q=\nabla^2
g(\overline{x}(T))$, imply that the Riccati equation linked to the
second-order derivative of $W(t,\cdot)$ at $\overline{x}(t)$ is
exactly \eqref{RiccatiInc}. At this stage, we appeal to Theorem
\ref{TheoPntConju} to deduce that $W(t,\cdot)$ is twice
continuously differentiable on a neighborhood of
$\overline{x}(t)$, for all $t\in [a,T]$, because there is no
conjugate point on such an interval and the Hessian $\nabla_{xx}^2
W(t,\overline{x}(t))$ is equal to $-R(t)$. It is also clear that
$\nabla_x W(t,\overline{x}(t))=-\overline{p}(t)$. Summarizing, we
have proved that for all $t\in[a,T]$ there exists a function
$W(t,\cdot)\geq V(t,\cdot)$,  of class $C^2$ on a neighborhood of
$\overline{x}(t)$, such that
$V(t,\overline{x}(t))=W(t,\overline{x}(t))$ and  $ (\nabla_x
W(t,\overline{x}(t)),\nabla^2_{xx} W(t,\overline{x}(t)))=
(-\overline{p}(t),-R(t))$.  Proposition \ref{Equiv} ends the
proof.
\end{proof}
\begin{remark}
From the $C^2$ regularity of $H$ on $\mathbb{R}^n\times (
\mathbb{R}^n\smallsetminus \lbrace 0\rbrace )$ and the form of
\eqref{RiccatiInc} it follows  that the values of  $R$ are
symmetric matrices. Note that the quadratic forms defined by
$R(t)$ cannot blow-up to $-\infty$ in finite time. Indeed, since
$H_{pp}[\cdot]\geq 0$ on $[t_0,T]$, we have $ -\dot{R}(t)\geq
H_{px}[t]R(t)+R(t)H_{xp}[t] + H_{xx}[t], $ which implies that
$R(t)\geq \tilde{R}(t)$ on the maximal interval of existence of
$R(\cdot)$, where $\tilde{R}$ solves
$$-\dot{\tilde{R}}(t)= H_{px}[t]\tilde{R}(t)+\tilde{R}(t)H_{xp}[t]
+ H_{xx}[t],\ \tilde{R}(T)=-Q.$$ Since $\tilde{R}(\cdot)$ does not
blow-up to $-\infty$ in finite time, the same is true for
$R(\cdot)$. On the other hand, it is well known that the
possibility of a blow-up to $+\infty$ on $[t_0,T]$ cannot be
excluded, in general. However, even though the propagation of
$(-\overline{p},- R)$ in the superjet of $V(t,\cdot)$ at
$\overline{x}(t)$ could stop at a possible blow-up time,
Theorem~3.1 in \cite{Nostro} provides sufficient conditions for
$J^{2,+}_x V(t,\overline{x}(t))$ to be nonempty for all
$t\in[t_0,T]$,
 because it implies that  for some $c \geq 0$,
$
(-\overline{p}(t),c I_n )\in J^{2,+}_x V(t,\overline{x}(t)) \quad \mbox{for all } t\in[t_0,T].
$
Note that, at the expense of assuming more smoothness of the Hamiltonian, the above theorem gives a significant improvement of Theorem~3.1 in \cite{Nostro}, before a possible blow-up time.
We finally recall that some sufficient conditions (more restrictive than the assumptions required here) for the existence of a global solution to \eqref{RiccatiInc} on $[t_0,T]$  were provided, for instance, in \cite{MR1223368}.
\end{remark}

It is natural to expect that the inclusion involving subjets of $V(t,\cdot)$ propagates forward along optimal trajectories.
\begin{theorem}\label{IncluRiccatiSub}
Assume (SH), and suppose $H \in C^{2,1}_{loc}(\mathbb{R}^n\times (
\mathbb{R}^n\smallsetminus \lbrace 0\rbrace ))$ and $\phi:
\mathbb{R}^n \rightarrow \mathbb{R}$ is
 locally Lipschitz. Consider an optimal solution  $\overline{x}(\cdot)$ for  $\mathcal{P}(t_0,x_0)$ and suppose that there exists a
  nonvanishing dual arc $\overline{p}(\cdot)$, associated with $\overline{x}(\cdot)$, such that  for some $R_0\in S(n)$
\begin{equation}\label{inSubJet}
(-\overline{p}(t_0),-R_0)\in J^{2,-}_x V(t_0,x_0).
\end{equation}
Let the solution $R$ of the Riccati equation
\begin{equation}\label{RiccatiInc2}
\left\lbrace
\begin{array}{l}
\dot{R}(t)+ H_{px}[t]R(t)+R(t)H_{xp}[t] + R(t)H_{pp}[t]R(t) + H_{xx}[t]=0,\\
R(t_0)=R_0,
\end{array}
\right.
\end{equation}
be defined on $[t_0,a]$ for some $a\in (t_0,T]$. Then
\begin{equation}\label{TesiSub}
(-\overline{p}(t),-R(t))\in J^{2,-}_x V(t,\overline{x}(t)) \quad \mbox{ for all } t\in [t_0,a].
\end{equation}
Moreover, if $\phi$ is locally semiconcave, then $a$ may be taken
equal to $T$.
\end{theorem}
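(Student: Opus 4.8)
The plan is to mirror the proof of Theorem \ref{Theo_SuperJet}, but now producing a $C^2$ function that touches $V(t,\cdot)$ from \emph{below} along $\overline{x}$. By the subjet analogue of Proposition \ref{Equiv}, hypothesis \eqref{inSubJet} yields a function $\psi\in C^2$ with $\psi\le V(t_0,\cdot)$, $\psi(x_0)=V(t_0,x_0)$, and $(\nabla\psi(x_0),\nabla^2\psi(x_0))=(-\overline{p}(t_0),-R_0)$. It then suffices to construct, for each $t\in[t_0,a]$, a function $U(t,\cdot)$ of class $C^2$ near $\overline{x}(t)$ satisfying $U(t,\cdot)\le V(t,\cdot)$, $U(t,\overline{x}(t))=V(t,\overline{x}(t))$, $\nabla_x U(t,\overline{x}(t))=-\overline{p}(t)$ and $\nabla^2_{xx}U(t,\overline{x}(t))=-R(t)$; the subjet analogue of Proposition \ref{Equiv} then gives \eqref{TesiSub} at once.

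To build $U$ I would propagate $\psi$ forward along characteristics. For $\xi$ near $x_0$ let $(x(\cdot\,;\xi),p(\cdot\,;\xi))$ solve the Hamiltonian system with $x(t_0;\xi)=\xi$ and $p(t_0;\xi)=-\nabla\psi(\xi)$; since $\overline{p}$ is nonvanishing and $H\in C^{2,1}_{loc}$ off $p=0$, this is a well-posed ODE whose solution for $\xi=x_0$ is exactly $(\overline{x},\overline{p})$. The map $\Theta_t:\xi\mapsto x(t;\xi)$ is a local diffeomorphism near $x_0$ precisely while $X(t)=D_\xi x(t;\cdot)$ is invertible, i.e. while $R(t)=P(t)X(t)^{-1}$ is finite, hence on $[t_0,a]$. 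Set $U(t,z):=\psi(\Theta_t^{-1}(z))$. Because each characteristic is an admissible trajectory ($\dot x=\nabla_p H(x,p)\in F(x)$), the dynamic programming principle together with $\psi\le V(t_0,\cdot)$ gives $V(t,x(t;\xi))\ge V(t_0,\xi)\ge\psi(\xi)$, that is $U(t,\cdot)\le V(t,\cdot)$; and at $\xi=x_0$ the optimality of $\overline{x}$ forces $U(t,\overline{x}(t))=\psi(x_0)=V(t_0,x_0)=V(t,\overline{x}(t))$.

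The delicate step is the regularity of $U$ and the identification of its Hessian. I would recognize $U$ as the value function of the time-reversed Mayer problem with Hamiltonian $H(x,-p)$ and terminal cost $\psi$, on $[t_0,a]$; the absence of a conjugate time there (equivalent to invertibility of $X$ on $[t_0,a]$) lets me invoke Theorem \ref{TheoPntConju} to conclude that $U(t,\cdot)\in C^2$ near $\overline{x}(t)$, with $\nabla_x U(t,\overline{x}(t))=-\overline{p}(t)$ and $-\nabla^2_{xx}U(t,\overline{x}(t))$ equal to the solution of the associated Riccati equation; a routine check that the reversed Riccati with datum $R_0$ at $t_0$ coincides with \eqref{RiccatiInc2}, together with uniqueness, gives $\nabla^2_{xx}U(t,\overline{x}(t))=-R(t)$. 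This is the main obstacle: the characteristic field is only $C^{1,1}$, so $\Theta_t$ and hence $U$ are a priori only $C^{1,1}$, and genuine twice-differentiability at $\overline{x}(t)$ — which is what is needed to use $U$ as a $C^2$ test function — is exactly the finer regularity furnished by the $C^{2,1}$ part of Theorem \ref{TheoPntConju}. This is precisely why $H\in C^{2,1}_{loc}$ is assumed here, in contrast with the $C^2$ hypothesis of Theorem \ref{Theo_SuperJet}. Combining the three steps proves \eqref{TesiSub} on $[t_0,a]$.

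For the last assertion, when $\phi$ is locally semiconcave so is $V(t,\cdot)$, and I would close the interval of existence of $R$ by a continuation argument producing two-sided bounds. On any subinterval where $R$ is defined, comparing \eqref{RiccatiInc2} with its linear part (dropping $R(t)H_{pp}[t]R(t)\ge0$, using $H_{pp}[t]\ge0$) bounds $R$ \emph{above} by the solution of a linear equation, which is finite on all of $[t_0,T]$. For the lower bound, Theorem \ref{PropSubNonProx} gives $-\overline{p}(t)\in\partial^-_x V(t,\overline{x}(t))$; since $V(t,\cdot)$ is semiconcave, this forces differentiability at $\overline{x}(t)$, hence $-\overline{p}(t)\in\partial^+_x V(t,\overline{x}(t))$ and, by Remark \ref{aug16a}, $(-\overline{p}(t),cI_n)\in J^{2,+}_x V(t,\overline{x}(t))$. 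Pairing this superjet with the subjet $(-\overline{p}(t),-R(t))\in J^{2,-}_x V(t,\overline{x}(t))$ yields $-R(t)\le cI_n$, i.e. $R(t)\ge -cI_n$. Thus $R$ stays bounded and cannot escape to $\pm\infty$ in finite time, so it is defined on all of $[t_0,T]$ and \eqref{TesiSub} holds for every $t\in[t_0,T]$, that is, $a$ may be taken equal to $T$.
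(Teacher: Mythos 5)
Your overall architecture --- transport the $C^2$ minorant $\psi$ along the characteristic flow and use it as a test function from below --- is close in spirit to the paper's proof (which uses the same characteristic field \eqref{Variation} and the same time-reversal application of Theorem \ref{TheoPntConju}), and your first and last paragraphs (the construction of $\psi$, the inequality $U\le V$ via the dynamic programming principle, and the no-blow-up argument in the semiconcave case) are essentially the paper's. But there is a genuine gap at the crucial step: the claim that $U=\psi\circ\Theta_t^{-1}$ is \emph{the value function of the time-reversed Mayer problem} with terminal cost $\psi$. This identification is unjustified and in general false. Characteristics of a Mayer problem are extremals, not minimizers: once $U$ is known to be $C^1$, it satisfies the same Hamilton--Jacobi equation $-U_t+H(x,-U_x)=0$ on the tube, hence it is \emph{nondecreasing} along every admissible trajectory of $F$ that stays in the tube; so among such trajectories arriving at $z$ at time $t$, the characteristic \emph{maximizes} $\psi(y(t_0))$, and the value function of the reversed minimization problem lies below $U$, in general strictly. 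Consequently you cannot invoke Theorem \ref{TheoPntConju} in its value-function reading (which rests, via the remark following it and the sensitivity relations of \cite{Nostro}, on optimal trajectories of a genuine Mayer problem) to conclude that $U(t,\cdot)$ is $C^2$ with Hessian $-R(t)$. As written, the $C^2$ regularity of your test function --- the very property that makes it usable in Proposition \ref{Equiv} --- is unproved, and the argument collapses there.

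The gap is reparable, but it needs the ingredient you skipped: the method-of-characteristics identity $\nabla_xU(t,x(t;\xi))=-p(t;\xi)$, i.e. $\nabla_x U(t,\cdot)=-\Phi_t$. This follows from the constancy in $t$ of the matrix $p(t;\xi)^*\,\partial x(t;\xi)/\partial\xi$, which is exactly where the positive $1$-homogeneity of $H$ in $p$ enters (Euler's relations $H=\langle p,\nabla_pH\rangle$ and $\nabla^2_{pp}H\,p=0$). With that identity, Theorem \ref{TheoPntConju} applied to the reversed \emph{characteristic system} (no auxiliary value function needed) gives that $\Phi_t$ is $C^1$ with $D\Phi_t(\overline{x}(t))=R(t)$, whence $U(t,\cdot)\in C^2$ near $\overline{x}(t)$ with the desired gradient and Hessian, and the subjet analogue of Proposition \ref{Equiv} finishes. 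Note also that your explanation of the hypothesis $H\in C^{2,1}_{loc}$ is misplaced: for each fixed $t$ the graph map $\Phi_t$ is already $C^1$ under $H\in C^2$. The paper needs $C^{2,1}$ because its own proof never constructs $U$ at all; it verifies the subjet inequality for $V$ directly, combining the dynamic programming principle with the subjet inequality at $t_0$ and estimating $\int_{t_0}^t\frac{d}{ds}\bigl[\langle\overline{p},y-\overline{x}\rangle+\tfrac12\langle R(y-\overline{x}),y-\overline{x}\rangle\bigr]\,ds$ by Taylor remainders of $\nabla H$ and Lipschitz constants of $\Phi_s$ that must be uniform in $s$ --- that uniformity is what the extra Lipschitz regularity of $\nabla^2H$ buys.
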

\begin{proof} We first observe that values of $R(\cdot)$ are
symmetric matrices.  From \eqref{inSubJet} and Proposition
\ref{Equiv}, it follows that there exists a function $g \in C^2$
such that $V(t_0,\cdot)\geq g(\cdot)$, $g(x_0)=V(t_0,x_0)$, and
\begin{equation}\label{Condi}
(\nabla g(x_0),\nabla^2 g(x_0))=(-\overline{p}(t_0),-R_0).
\end{equation}
Consider the Hamiltonian system on $[t_0,T]$
\begin{equation}\label{Variation}
\left\{
\begin{array}{rlrl}
\dot{y}(t)&= \nabla_p H(y(t),q(t)),& y(t_0)&=y_0, \\
-\dot{q}(t)&= \nabla_x H(y(t),q(t)),& -q(t_0)&= \nabla g (y_0),
\end{array}\right.
\end{equation}
and denote its solution by $(y(\cdot;t_0,y_0), q(\cdot;t_0,y_0))$.
Then for some $\delta>0$, $H$ is of class $C^{2,1}$ in a
neighborhood $\mathcal{V}$ of $\cup_{t\in [t_0,T]}
(\overline{x}(t),\overline{p}(t) )$ and $(y(\cdot;t_0,y_0),
q(\cdot;t_0,y_0))$ takes value in $\mathcal{V}$ for any $y_0 \in
B(x_0 ,\delta)$. So, for every $y_0\in B(x_0 ,\delta)$, the
solution of \eqref{Variation} is $C^1$. Therefore, thanks to
\eqref{Condi} and the fact that the above system has a unique
solution, we deduce that $y(\cdot;t_0, x_0)= \overline{x}(\cdot)$,
and $q(\cdot; t_0 , x_0) =\overline{p}(\cdot)$. Fix $y_0\in
B(x_0,\delta)$. To simplify the notation, from now on denote the
 pair $(y(\cdot;t_0,y_0), q(\cdot;t_0,y_0)) $ by $(y(\cdot),q(\cdot))$.
Observe next that the mapping  $\tilde{R}(t):= R( T + t_0 - t )$
is the solution on $[T + t_0 - a,T]$ of the Riccati equation
\begin{equation}\label{TildeRicc}
\left\lbrace
\begin{array}{l}
\dot{\tilde{R}}(t)+ \tilde{H}_{px}[t]\tilde{R} (t)+\tilde{R} (t) \tilde{H}_{xp}[t] + \tilde{R} (t) \tilde{H}_{pp}[t]\tilde{R} (t) +  \tilde{H}_{xx}[t]=0,\\
\tilde{R}(T)= - \nabla^2 g (x_0 ),
\end{array}
\right.
\end{equation}
where $\tilde{H}_{px}[t]:= - H_{px}(\bar x(T + t_0 - t),\bar p (T
+ t_0 - t))$, and similarly for $\tilde{H}_{xx}[t],\,
\tilde{H}_{pp}[t], \, \tilde{H}_{xp}[t]$. Then, the interval $[T +
t_0 - a,T]$ contains no conjugate times of $\tilde{R}$ for $x_0$.
Thus, we appeal to Theorem \ref{TheoPntConju} to obtain, in terms
of $R$ and $(y,q)$, that for some $\rho \in (0,\delta)$ and all $t
\in [t_0,a]$, the set
$$M_{t,\rho}=\{(y(t;y_0),q(t;y_0)) \;:\; y_0 \in \mathring{B}(x_0,\rho)\}$$
is the graph of a continuously differentiable map $\Phi_t$ from an
open set $\mathcal{O}_t$ into $\mathbb{R}^n$.  Taking $\rho >0$
smaller, if needed,  and using again Theorem \ref{TheoPntConju},
we may assume that $\Phi_t$ are Lipschitz with a constant $k$
independent from $t \in [t_0,a]$. Hence, if $y_0$ is sufficiently
close to $x_0$,
\begin{equation}\label{22}
\mid q(t)-\overline{p}(t)\mid \leq k  \mid y(t)-\overline{x}(t)\mid,\mbox{ for all  } t \in[t_0,a],
\end{equation}
and, moreover,
\begin{equation}\label{Key}
R(t)(y(t)-\overline{x}(t))= q(t)-\overline{p}(t)+o_t(\mid
y(t)-\overline{x}(t) \mid ),
\end{equation}
where for every $t$, $\lim_{r \to 0+} \frac{o_t(r)}{r}=0.$ Then,
for some $k_1 \geq 0$, $o_t(\mid y(t)-\overline{x}(t) \mid )\leq
k_1 \mid y(t)-\overline{x}(t) \mid$ for all $t \in [t_0,a]$.
Consider, for every $t\in[t_0,a]$, the continuous  map
\begin{equation}\label{diff}
F_t: \mathring{B}(x_0,\rho) \ni z \longmapsto y(t;t_0,z).
\end{equation}
Since $M_{t,\rho}$ is the graph of a function,  $F_t$ is
injective.  Consequently, by the invariance-of-domain theorem,
$F_t( \mathring{B}(x_0,\rho))$ is  open.

Fix  $t \in [t_0,a]$. Now, we want to show that, for every $y_0\in
B(x_0,\delta)$,
\begin{equation}\label{Tay2}
\begin{split}
V(t,y(t))&-V(t,\overline{x}(t))\geq \langle -\overline{p}(t),y(t)-\overline{x}(t) \rangle \\
&-\frac{1}{2}\langle R(t)(y(t)-\overline{x}(t)), y(t)-\overline{x} (t) \rangle + o (\mid y(t)-\overline{x}(t) \mid^2).
\end{split}
\end{equation}
The conclusion \eqref{TesiSub} will follow from \eqref{Tay2}
because $F_t( \mathring{B}(x_0,\rho))$ is an open neighborhood of
$\bar x(t)$. So, let us prove \eqref{Tay2}. From the dynamic
programming principle and \eqref{inSubJet}, we deduce that, for all $t\in[t_0,a]$,
\begin{equation}\label{Prima}
\begin{split}
V(t,y(t))&-V(t,\overline{x}(t))\geq V(t_0,y(t_0))-V(t_0,\overline{x}(t_0))\geq
\langle -\overline{p}(t_0),y(t_0)-\overline{x}(t_0) \rangle \\
&-\frac{1}{2}\langle R(t_0)(y(t_0)-\overline{x}(t_0)), y(t_0)-\overline{x} (t_0) \rangle + o(\mid y(t_0)-\overline{x}(t_0) \mid^2)\\
=\langle - \overline{p}(t),&y(t)-\overline{x}(t) \rangle  -\frac{1}{2}\langle R(t)(y(t)-\overline{x}(t)), y(t)-\overline{x}(t) \rangle + o(\mid y(t_0)-x(t_0) \mid^2)\\
&+ \int_{t_0}^t \frac{d}{d s} \Big( \langle\overline{p}(s),y(s)-\overline{x}(s) \rangle + \frac{1}{2} \langle R(s)(y(s)-\overline{x}(s)),y(s)-\overline{x}(s)\rangle\Big) ~ d s.
\end{split}
\end{equation}
We shall evaluate the integral in the right side of \eqref{Prima}.
The symmetry of the values of $R(\cdot)$ yields
\begin{equation*}
\begin{split}
&\int_{t_0}^t \frac{d}{d s} \Big( \langle \overline{p} (s),y(s)-\overline{x}(s) \rangle + \frac{1}{2} \langle R(s)(y(s)-\overline{x}(s)),y(s)-\overline{x}(s)\rangle\Big) ~ d s \\
&= \int_{t_0}^t \Bigg( \langle \overline{p}(s),\dot{ y}(s)-\dot{\overline{x}}(s) \rangle + \langle \dot{\overline{p}}(s),y(s)-\overline{x}(s) \rangle\Bigg)~ ds \\
&+ \int_{t_0}^t \Bigg( \langle R (s)(y(s)-\overline{x}(s)),\dot{y}(s)-\dot{\overline{x}} (s)\rangle +   \frac{1}{2} \langle \dot{R}(s)(y(s)-\overline{x}(s)),y(s)-\overline{x}(s)\rangle \Bigg)~ ds \\
&=(I) + (II).
\end{split}
\end{equation*}
For the first term we use \eqref{Variation}:
\begin{equation}\label{Lungo1}
(I)= \int_{t_0}^t \Bigg( \langle \overline{p}(s),\nabla_p H(y(s),q(s))- \nabla_p H(\overline{x}(s),\overline{p}(s)) \rangle  - \langle \nabla_x H(\overline{x}(s),\overline{p}(s)),y(s)-\overline{x}(s) \rangle    \Bigg) ~ ds.
\end{equation}
For the second one, we first use \eqref{RiccatiInc2},
\eqref{Variation}, together with the symmetry of the values of
$R(\cdot)$:
\begin{equation}\label{Lungo2}
\begin{split}
(II) = \int_{t_0}^t &\Bigg(
\langle R (s)(y(s)-\overline{x}(s)),\nabla_p H(y(s),q(s)) -\nabla_p H(\overline{x}(s),\overline{p}(s)) \rangle \\
&-\langle H_{px}[s]R(s)(y(s)-\overline{x}(s)),y(s)-\overline{x}(s)\rangle
-\frac{1}{2}  \langle H_{pp}[s]R(s)(y(s)-\overline{x}(s)),R(s)( y(s)-\overline{x}(s))\rangle \\
&-\frac{1}{2} \langle H_{xx}[s](y(s)-\overline{x}(s)),y(s)-\overline{x}(s)\rangle \Bigg) ~ ds.
\end{split}
\end{equation}
We appeal now to \eqref{22} and \eqref{Key} to deduce that
\begin{equation*}
\begin{split}
(II)
= \int_{t_0}^t \Bigg( &\langle q(s)-\overline{p}(s),\nabla_p H(y(s),q(s)) -\nabla_p H(\overline{x}(s),\overline{p}(s)) \rangle
-\langle H_{px}[s](q(s)-\overline{p}(s)),y(s)-\overline{x}(s)\rangle \\
&-\frac{1}{2}  \langle H_{pp}[s](q(s)-\overline{p}(s)),q(s)-\overline{p}(s)\rangle -\frac{1}{2} \langle H_{xx}[s](y(s)-\overline{x}(s)),y(s)-\overline{x}(s)\rangle\\
&+ o_s (\mid y(s)-\overline{x}(s)) \mid^2) \Bigg)~ ds,
\end{split}
\end{equation*}
where $o_s (\mid y(s)-\overline{x}(s)\mid^2 )\leq c_0 \mid
y(s)-\overline{x}(s)\mid^2 $ for some $c_0 \geq 0$, and  $\lim_{r
\searrow 0} \frac{o_s(r)}{ r} =0$ for all $s\in [t_0,t]$. We add
up $(I)$ and $(II)$. A short calculation shows that
\begin{equation}\label{PrimaOpiccolo}
\begin{split}
\int_{t_0}^t \frac{d}{d s} &\Big( \langle p(s),y(s)-\overline{x}(s) \rangle + \frac{1}{2} \langle R(s)(y(s)-\overline{x}(s)),y(s)-\overline{x}(s)\rangle\Big) ~ d s \\
= \int_{t_0}^t &\Bigg(  H(y(s),q(s))-H(\overline{x}(s),\overline{p}(s)) -\langle \nabla_x H(\overline{x}(s),\overline{p}(s)), y(s)-\overline{x}(s)\rangle \\
& - \langle \nabla_p H(\overline{x}(s),\overline{p}(s)), q(s)- \overline{p}(s)\rangle -\langle H_{px}[s](q(s)-\overline{p}(s)),y(s)-\overline{x}(s)\rangle\\
&-\frac{1}{2}\langle H_{pp}[s](q(s)-\overline{p}(s)),q(s)-\overline{p}(s)\rangle\\
&-\frac{1}{2}\langle H_{xx}[s](y(s)-\overline{x}(s)),y(s)-\overline{x}(s)\rangle  + o_s (\mid y(s)-\overline{x}(s)) \mid^2) \Bigg)~ ds .
\end{split}
\end{equation}
Since $H$ is of class $C_{loc}^{2,1}(\mathbb{R}^n\times (
\mathbb{R}^n\smallsetminus \lbrace 0\rbrace ))$, using  Taylor's
expansion we obtain that  the above integrand can
be estimated by $ \tilde{o}_s(\mid y(s)-\overline{x}(s)) \mid^2)$,
where  $\tilde{o}_s (\mid y(s)-\overline{x}(s)\mid^2 )\leq c_1
\mid y(s)-\overline{x}(s)\mid^2 $ for some $c_1 \geq 0$, and
$\lim_{r \searrow 0} \frac{\tilde{o}_s(r)}{ r} =0$ for all $s\in
[t_0,T]$. Recalling \eqref{Prima} yields
\begin{equation}\label{QuasiFine}
\begin{split}
V(t,y(t))&-V(t,\overline{x}(t))\geq \langle - \overline{p}(t),y(t)-\overline{x}(t) \rangle
-\frac{1}{2}\langle R(t)(y(t)-\overline{x}(t)), y(t)-\overline{x}(t) \rangle \\
&+ o(\mid y(t_0)-x_0 \mid^2)+ \int_{t_0}^t \tilde{o}_s(\mid y(s)-\overline{x}(s) \mid^2) ~ d s.
\end{split}
\end{equation}
We claim that  there exists $c>0$ such that
\begin{equation}\label{ultimi}
\mid y(s)-\overline{x}(s) \mid \leq c \mid y(t)-\overline{x}(t)\mid, \mbox{ for all } s\in [t_0,t].
\end{equation}
Indeed, from the local Lipschitz continuity of $\nabla
H(\cdot,\cdot)$, working with the system \eqref{Variation}, we
prove existence of a constant $k_1$ such that, for all
$s\in[t_0,t]$,
\begin{equation}\label{11}
 \mid y(s)-\overline{x}(s) \mid \leq k_1 \left( \mid y(t)-\overline{x}(t)\mid +  \mid q(t)-\overline{p}(t)\mid \right).
\end{equation}
By \eqref{22} and \eqref{11}  there exists a constant $c>0$ such
that \eqref{ultimi} holds true. Hence
$$
\lim_{y(t) \rightarrow \overline{x}(t)} \frac{\tilde{o}_s (\mid y (s) -\overline{x}(s) \mid^2 ) }{|y(t)-\overline{x}(t)\mid^2 }=0.
$$
From the Lebesgue dominated convergence theorem it follows that the integral in \eqref{QuasiFine} can be bounded by an infinitesimal of the form $o (\mid y (t) -\overline{x}(t) \mid^2 )$. This observation together with \eqref{QuasiFine} allows to conclude that, for every $y_0\in B(x_0,\delta)$ and $t\in[t_0,a]$,  inequality \eqref{Tay2} holds true. This completes the proof of \eqref{TesiSub}.

Finally, let us investigate the case when $\phi$ (hence of $V$, see
\cite{MR2728465}) is locally semiconcave. Set
$$\overline{t}=\sup\lbrace t\in [t_0,T]:\; R(\cdot) \mbox{ is defined on } [t_0,t] \rbrace.$$
Since $H_{pp}[t]\geq 0$ on $[t_0,T]$, we have that $\dot{R}(t)\leq
- H_{px}[t]R(t)- R(t)H_{xp}[t] - H_{xx}[t]$.
 Then, since the
solution of the linear equation $\dot{\tilde{R}} +
H_{px}[t]\tilde{R}(t)+ \tilde{R}(t)H_{xp}[t] + H_{xx}[t]=0, \
\tilde{R}(t_0)=R_0$ is well-defined on $[t_0,T]$, a constant
$c_1>0$ exists such that $R(t) \leq c_1 I$,  for any $t\in
[t_0,\overline{t})$. On the other hand, since $V$ is locally
semiconcave, by Remark \ref{aug16a}, for some $c_2>0$ and all $t
\in [t_0,T]$, $(-\bar p(t), c_2I) \in J^{2,+}_x V(t,\cdot)$. Hence
$-R(t) \leq c_2I$ on $[t_0,\bar t)$ and therefore $\|R(t)\| \leq
\max\{c_1,c_2\}$ on $[t_0,\bar t)$. So we can guarantee that $R$
can be extended to the maximal interval of existence $[t_0,T]$.
\end{proof}
\section{Second-order regularity of the value function along optimal trajectories}
In this section, we derive several results concerning propagation of the second-order regularity of the value function along optimal trajectories.
\subsection{Forward propagation of  twice Fr\'echet differentiability}
Our aim here is to prove that  twice Fr\'echet differentiability
of $V(t,\cdot)$ propagates along optimal trajectories forward in
time, as the first-order differentiability (see Corollary
\ref{Differentiability}). Note that we cannot ensure that such a
result holds true on the whole interval $[t_0,T]$, because of the
existence of possible blow-up times for the Riccati equation
below.
\begin{theorem}\label{PropForward}
Assume $(SH)$, and suppose  $H \in C^{2} (\mathbb{R}^n \times ( \mathbb{R}^n
\setminus \lbrace 0\rbrace))$ and $\phi$ is locally
Lipschitz with  $\partial^+ \phi(z)\neq \emptyset$ for all $z\in
\mathbb{R}^n$. Let $\overline{x}$ be an optimal solution for
$\mathcal{P}(t_0,x_0)$ such that
 $\partial^+ \phi(\overline{x}(T)) \neq \lbrace 0 \rbrace$ and consider a dual arc $\overline{p}$ satisfying
 $ -\overline{p}(T)\in \partial^+ \phi(\overline{x}(T))\setminus \lbrace 0 \rbrace $.
If $V(t_0,\cdot)$ is continuously differentiable on a neighborhood
of $x_0$, then $V(t,\cdot)$ is continuously differentiable on a
neighborhood of $\bar x(t)$ for all $t \in [t_0,T]$.
Furthermore,  if $V(t_0,\cdot)$ is  twice Fr\'echet differentiable at
$x_0$ and the solution $R$ of
\begin{equation}\label{DeivSecBase}
\left\lbrace
\begin{array}{l}
\dot{R}(t)+ H_{px}[t]R(t)+R(t)H_{xp}[t] + R(t)H_{pp}[t]R(t) + H_{xx}[t]=0, \\
R(t_0)=-\nabla_{xx}V(t_0,x_0),
\end{array}
\right.
\end{equation}
is defined on $[t_0,a]$ for some $a\in ( t_0, T]$, then
$V(t,\cdot)$ is  twice Fr\'echet differentiable at
$\overline{x}(t)$ for any $t\in [t_0,a]$. Moreover, $R(t)=-
\nabla_{xx} V(t,\overline{x}(t))$.
\end{theorem}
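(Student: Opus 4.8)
The plan is to realize $V(t,\cdot)$ near $\overline{x}(t)$ as the pushforward of $V(t_0,\cdot)$ under the Hamiltonian flow, and then transport the regularity of $V(t_0,\cdot)$ along this flow, differentiating once for the $C^1$ claim and once more for the Hessian claim. Since $-\overline{p}(T)\neq 0$, Remark~\ref{RemarkDualArc} gives $\overline{p}(t)\neq 0$ on $[t_0,T]$, so the characteristic system $\dot y=\nabla_p H(y,q)$, $-\dot q=\nabla_x H(y,q)$ is governed by a $C^1$ vector field along $(\overline{x},\overline{p})$ and is uniquely solvable forward and backward (Lemma~\ref{LemmmaLip}). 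For $y_0$ near $x_0$ set $q_0(y_0):=-\nabla_x V(t_0,y_0)$, which is continuous and nonzero by the $C^1$ hypothesis on $V(t_0,\cdot)$ and $\nabla_x V(t_0,x_0)=-\overline{p}(t_0)\neq 0$; let $(y(\cdot\,;y_0),q(\cdot\,;y_0))$ denote the corresponding characteristic. For each such $y_0$ choose an optimal trajectory of $\mathcal P(t_0,y_0)$ with a dual arc terminating in $\partial^+\phi$ (Remark~\ref{UltimoRemark}); since $\partial^-_xV(t_0,y_0)\neq\emptyset$, Corollary~\ref{Differentiability} shows this dual arc starts at $-\nabla_x V(t_0,y_0)=q_0(y_0)$, whence by uniqueness it equals $(y(\cdot\,;y_0),q(\cdot\,;y_0))$, this trajectory is optimal, and
\[
\nabla_x V(t,y(t;y_0))=-q(t;y_0)\qquad(t\in[t_0,T]).
\]

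To prove the first assertion I would show that $F_t:y_0\mapsto y(t;y_0)$ is a homeomorphism of a neighborhood of $x_0$ onto a neighborhood of $\overline{x}(t)$. Continuity of $F_t$ follows from continuous dependence of the flow on initial data together with continuity of $q_0(\cdot)$. For injectivity, if $F_t(y_0)=F_t(y_0')=z$ then $V(t,\cdot)$ is differentiable at $z$ with $\nabla_x V(t,z)=-q(t;y_0)=-q(t;y_0')$, so the two characteristics issued backward from $(z,-\nabla_x V(t,z))$ coincide and $y_0=y_0'$. By invariance of domain $F_t$ is then open, hence a homeomorphism, and $V(t,\cdot)$ is differentiable on the open set $F_t(\,\cdot\,)$ with $\nabla_x V(t,\cdot)=-q(t;F_t^{-1}(\cdot))$; this gradient is continuous, so $V(t,\cdot)\in C^1$ near $\overline{x}(t)$.

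For the second assertion I would upgrade the flow to a first-order expansion at $x_0$. Twice differentiability of $V(t_0,\cdot)$ at $x_0$ makes $y_0\mapsto q_0(y_0)$ differentiable at $x_0$ with derivative $-\nabla^2_{xx}V(t_0,x_0)=R(t_0)$; composing the $C^1$ flow (Lemma~\ref{LemmaTacnico}) with this pointwise-differentiable initial datum, $F_t$ and $q(t;\cdot)$ are differentiable at $x_0$, and their derivatives $X(t):=DF_t(x_0)$, $P(t):=Dq(t;\cdot)(x_0)$ solve the linear variational system of \eqref{CP_prem}-type with $X(t_0)=I$, $P(t_0)=R(t_0)$. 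Consequently $R(t)=P(t)X(t)^{-1}$ solves the Riccati equation \eqref{DeivSecBase}; since $R$ exists on $[t_0,a]$ there is no conjugate time there, so $X(t)$ is invertible and $P(t)=R(t)X(t)$ on $[t_0,a]$. As $F_t$ is a homeomorphism differentiable at $x_0$ with invertible derivative, the pointwise inverse function theorem yields that $F_t^{-1}$ is differentiable at $\overline{x}(t)$ with derivative $X(t)^{-1}$.

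Combining these facts, the gradient map $\nabla_x V(t,\cdot)=-q(t;F_t^{-1}(\cdot))$ is differentiable at $\overline{x}(t)$ with derivative $-P(t)X(t)^{-1}=-R(t)$. Since $V(t,\cdot)$ is $C^1$ near $\overline{x}(t)$ and its gradient is differentiable at $\overline{x}(t)$ with the symmetric derivative $-R(t)$ (symmetry of $R$ being inherited from the Riccati structure, cf.\ Section~2.4), $V(t,\cdot)$ is twice Fr\'echet differentiable at $\overline{x}(t)$ with $\nabla^2_{xx}V(t,\overline{x}(t))=-R(t)$; this is moreover consistent with the subjet inclusion of Theorem~\ref{IncluRiccatiSub}. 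The main obstacle is that only \emph{pointwise} second-order information at $x_0$ is available, so the smooth conjugate-point machinery of Theorem~\ref{TheoPntConju} cannot be applied to the optimal flow directly. The argument must instead combine the purely topological homeomorphism property established in the first part with first-order differentiability of the flow at the single point $x_0$ and a pointwise inverse-function argument.
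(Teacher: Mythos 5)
Your proposal is correct and takes essentially the same route as the paper's own proof: the $C^1$ part is identical (characteristics launched from $-\nabla_x V(t_0,\cdot)$, injectivity of $y_0\mapsto y(t;y_0)$ via uniqueness of the gradient plus backward uniqueness of the Hamiltonian flow, then invariance of domain), and the second part is the same variational-system computation with initial data $(I_n,-\nabla^2_{xx}V(t_0,x_0))$, where your pointwise chain rule and pointwise inverse-function theorem for homeomorphisms are just a packaged form of the paper's explicit little-$o$ manipulations in \eqref{EqBase}--\eqref{QuasiConl} (in particular, replacing $o_t(|y_0-x_0|)$ by $o(|x(t)-\overline{x}(t)|)$ using the trivial kernel of $Y(t)$ is exactly your inverse-function step). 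Your closing observation---that only pointwise second-order data at $x_0$ is available, so Theorem \ref{TheoPntConju} cannot be invoked directly---correctly identifies why the paper argues by hand here.
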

\begin{proof} Corollary \ref{Differentiability} and Remark
\ref{RemarkDualArc} yield the existence of  $\rho >0$  such that
$V(t_0,\cdot)$ is continuously differentiable on $B(x_0,\rho)$ and
$\nabla_xV(t_0,y_0)\neq 0$ for all $y_0 \in B(x_0,\rho)$.  Fix an
optimal solution $x$ of $\mathcal{P}(t_0,y_0)$, where $y_0 \in
B(x_0,\rho)$. From Corollary \ref{Differentiability} and Remark
\ref{RemarkDualArc} we deduce that $p(t_0)= -\nabla_xV(t_0,y_0)$
and $p$ never vanishes. Consequently $(x,p)$ solves the
Hamiltonian system
\begin{equation}\label{Variation1}
\left\{
\begin{array}{rlrl}
\dot{x}(t)&= \nabla_p H(x(t),p(t)),& x(t_0)&=y_0, \\
-\dot{p}(t)&= \nabla_x H(x(t),p(t)),& p(t_0)&=
-\nabla_xV(t_0,y_0).
\end{array}\right.  t \in [t_0,T]
\end{equation}
Since the solution of the above system is unique, we deduce that
the optimal solution and the corresponding dual arc are unique for
every $y_0 \in B(x_0,\rho)$. By Corollary \ref{Differentiability},
$V(t,\cdot)$ is differentiable at $x(t)$ and
$p(t)=-\nabla_xV(t,x(t))$  for all $t \in [t_0,T]$. Denote the
solution of (\ref{Variation1}) by $(x(\cdot;y_0),p(\cdot;y_0))$,
or, when the initial point can be omitted, by $(x,p)$.

The map $\Gamma_t$ defined by $\mathring{B}(x_0,\rho)\ni z \mapsto
x(t;z)=:\Gamma_t(z)$ is continuous.  We claim that it is also
injective. Indeed if $x(t,z_1)=x(t,z_2)$  for some $z_1, \, z_2
\in \mathring{B}(x_0,\rho)$, then $p(t,z_1)=
-\nabla_xV(t,x(t,z_1))=-\nabla_xV(t,x(t,z_2))=p(t,z_2)$.  We
deduce that $z_1=z_2$ from the uniqueness of solutions to ODEs with
locally Lipschitz right-hand side and a fixed final condition. By
the invariance-of-domain theorem, $\Gamma_t$ is a homeomorphism
from $\mathring{B}(x_0,\rho)$ onto the open set
$\Gamma_t(\mathring{B}(x_0,\rho))$. Using the fact that solutions
of (\ref{Variation1}) depend on the initial conditions in a
continuous way, we deduce that $\nabla_xV(t,\cdot)$ is continuous
on $\Gamma_t(\mathring{B}(x_0,\rho))$.  This implies the first
statement of theorem.

To prove the second one, we start with an observation. Let
$(Y,Q):[t_0,T]\rightarrow M(n)\times M(n)$ be the solution of the
system
\begin{equation}\label{LineForw}
\left( \begin{array}{c} \dot{Y} \\ \dot{Q} \end{array} \right) =
\begin{bmatrix}\; H_{xp}[s] & \; H_{pp}[s] \\ - H_{xx}[s] & -
H_{px}[s] \end{bmatrix} \left( \begin{array}{c}  Y \\ Q\end{array}
\right),\quad \left( \begin{array}{c}  Y(t_0) \\ Q(t_0)
\end{array} \right)=\left( \begin{array}{c}  I_n \\ -\nabla_{xx}^2
V(t_0,x_0) \end{array} \right).
\end{equation}
Under the current assumptions, $Y(t)$ is invertible  and
$R(t)=Q(t)Y(t)^{-1}$  for $t\in[t_0,a]$.

Let $\Psi:[t_0,T] \rightarrow M(2n)$ be the solution of the linear system
\begin{equation*}
\dot{\Psi} = \begin{bmatrix}\; H_{xp}[t] & \; H_{pp}[t] \\ - H_{xx}[t] & - H_{px}[t] \end{bmatrix} \Psi,\quad  \Psi(t_0)=I_{2n}.
\end{equation*}
By Lemma \ref{LemmaTacnico},
\begin{equation}\label{EqBase}
(x(t)-\overline{x}(t),p(t)-\overline{p}(t))= \Psi(t)(y_0-x_0,p(t_0)-\overline{p}(t_0))
+ o_t(\mid y_0-x_0 \mid +\mid p(t_0)- \overline{p}(t_0) \mid).
\end{equation}
Moreover, since $V(t_0,\cdot)$ is twice Fr\'echet differentiable at $x_0$,
\begin{equation}\label{Base3}
p(t_0)-\overline{p}(t_0)=-\nabla_x V(t_0,y_0)+\nabla_x
V(t_0,x_0)=-\nabla_{xx}^2 V(t_0,x_0)(y_0-x_0) + o(\mid y_0-x_0
\mid).
\end{equation}
Hence, the term $o_t(\mid y_0-x_0 \mid +\mid p(t_0)- \overline{p}(t_0) \mid)$ in \eqref{EqBase} can be replaced by $o_t(\mid y_0-x_0 \mid)$. Representing $\Psi(t)$ by
\begin{equation*}
\Psi(t)=
\left( \begin{array}{cc} A_1(t) & A_2(t) \\ A_3(t) & A_4(t) \end{array} \right) ,
\end{equation*}
where $A_i(t)\in M(n)$, for $i=1,...,4$, from \eqref{EqBase} we deduce that
\begin{equation}\label{Base1}
x(t)-\overline{x}(t)= A_1(t)(y_0-x_0)+A_2(t)(p(t_0)-\overline{p}(t_0))
+ o_t(\mid y_0-x_0 \mid),
\end{equation}
and
\begin{equation}\label{Base2}
p(t)-\overline{p}(t)= A_3(t)(y_0-x_0)+A_4(t)(p(t_0)-\overline{p}(t_0))
+ o_t(\mid y_0-x_0 \mid).
\end{equation}
Thanks \eqref{Base3}, \eqref{Base1}, we get
\begin{equation}\label{Base6}
x(t)-\overline{x}(t)= A_1(t)(y_0-x_0)-A_2(t)\nabla_{xx}^2 V(t_0,x_0)(y_0-x_0)
+ o_t(\mid y_0-x_0 \mid).
\end{equation}
The key point now is to observe that the map
$$ t\mapsto \Psi(t) \left( \begin{array}{c} I_n \\ -\nabla_{xx}^2 V(t_0,x_0) \end{array} \right) $$
coincides with the solution $(Y,Q)$ of  system
\eqref{LineForw}. Moreover,
\begin{equation}\label{Base7}
A_1 -  A_2 \nabla_{xx}^2 V(t_0,x_0)= Y, \quad
A_3 - A_4 \nabla_{xx}^2 V(t_0,x_0)= Q.
\end{equation}
Thus, from \eqref{Base6}, \eqref{Base7}, we deduce that
\begin{equation}\label{Base9}
x(t)-\overline{x}(t)= Y(t)(y_0-x_0)+ o_t(\mid y_0-x_0 \mid).
\end{equation}
We already know that
\begin{equation}\label{Base4}
\nabla_x V(t,x(t))= - p(t),\quad \nabla_x V(t,\overline{x}(t)) = -
\overline{p}(t).
\end{equation}
By \eqref{Base3}, \eqref{Base2}, \eqref{Base7}, and \eqref{Base4},
\begin{equation}\label{Base8}
\nabla_x V(t,x(t))-\nabla_x V(t,\overline{x}(t))= -Q(t)(y_0-x_0) +
o_t(\mid y_0-x_0 \mid).
\end{equation}
So, from \eqref{Base9}, \eqref{Base8}, and the invertibility of
$Y(\cdot)$ on $[t_0,a]$, we finally deduce that, on $[t_0,a]$,
\begin{equation*}
\nabla_x V(t,x(t))-\nabla_x V(t,\overline{x}(t)) = -
Q(t)Y(t)^{-1}(x(t) -\overline{x}(t)) + o_t( \mid y_0 - x_0 \mid ).
\end{equation*}
Moreover, thanks to \eqref{Base9} and the fact that $Y(t)$ has a
trivial kernel, the term $o_t ( \mid y_0 - x_0 \mid )$ can be
replaced by $o( \mid x(t) - \overline{x}(t) \mid )$, showing that
\begin{equation}\label{QuasiConl}
\nabla_x V(t,x(t))-\nabla_x V(t,\overline{x}(t)) = -
Q(t)Y(t)^{-1}(x(t) -\overline{x}(t)) + o( \mid x(t) -
\overline{x}(t) \mid ).
\end{equation}
The above equality, together with the fact that $\Gamma_t
(\mathring{B}(x_0,\rho))$ is a neighborhood of $\overline{x}(t)$,
allows to deduce that $V(t,\cdot)$ is twice Fr\'echet
differentiable at $\overline{x}(t)$ for any $t\in [t_0,a]$, and
$-\nabla^{2}_{xx}V(t,\overline{x}(t))= Q(t)Y(t)^{-1}=R(t)$.
\end{proof}
In the corollary below, we point out that the twice Fr\'echet
differentiability of $V(t,\cdot)$ propagates up to the final time
$T$ along the optimal trajectory starting from a point
$(t_0,x_0)$, when the cost $\phi$ is locally semiconcave. In this
case, $V$ is also locally semiconcave and the Hessian
$\nabla_{xx}^2 V(t_0,x_0)$ exists  for a.e.  $x_0 \in
\mathbb{R}^n$. Therefore, such a global propagation property is,
in a sense, generic.
\begin{corollary}
Assume, in addition to the hypotheses of Theorem
\ref{PropForward}, that the cost $\phi$ is a locally semiconcave
function. Then, the interval $[t_0,a]$ in Theorem
\ref{PropForward} can be taken equal to $[t_0,T]$.
\end{corollary}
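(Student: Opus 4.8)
The plan is to show that the local semiconcavity of $\phi$ (and hence of $V$) forces the solution $R$ of the Riccati equation \eqref{DeivSecBase} to exist on all of $[t_0,T]$, so that Theorem \ref{PropForward} applies with $a=T$. The argument mirrors the final part of the proof of Theorem \ref{IncluRiccatiSub}, the difference being that here $R$ is tied to the \emph{Hessian} of $V(t,\cdot)$ rather than merely to a subjet.

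First I would set $\overline{t} = \sup\{t \in [t_0,T] : R \text{ is defined on } [t_0,t]\}$ and recall that, on $[t_0,\overline{t})$, Theorem \ref{PropForward} already gives $\nabla^2_{xx} V(t,\overline{x}(t)) = -R(t)$. Next I would derive a two-sided bound on $R$ over $[t_0,\overline{t})$. For the upper bound, I would exploit $H_{pp}[t] \geq 0$: dropping the positive semidefinite term $R(t)H_{pp}[t]R(t)$ gives $\dot{R}(t) \leq -H_{px}[t]R(t) - R(t)H_{xp}[t] - H_{xx}[t]$ in the sense of quadratic forms, and comparison with the global solution of the corresponding \emph{linear} equation yields $R(t) \leq c_1 I$ for some $c_1 > 0$ on $[t_0,\overline{t})$.

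For the lower bound, I would invoke the local semiconcavity: by \cite{MR2728465} the function $V$ is locally semiconcave, so Remark \ref{aug16a} furnishes $c_2 > 0$ with $(-\overline{p}(t), c_2 I) \in J^{2,+}_x V(t,\overline{x}(t))$ for every $t \in [t_0,T]$. Since $V(t,\cdot)$ is twice differentiable at $\overline{x}(t)$ on $[t_0,\overline{t})$, its Hessian lies below any superjet matrix, so $-R(t) \leq c_2 I$, that is $R(t) \geq -c_2 I$. Combining the two estimates gives $\|R(t)\| \leq \max\{c_1,c_2\}$ on $[t_0,\overline{t})$.

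Finally, a uniformly bounded solution of the Riccati ODE cannot blow up in finite time, so $R$ extends past $\overline{t}$; this forces $\overline{t} = T$. Applying Theorem \ref{PropForward} with $a = T$ then yields the conclusion. I expect the only subtle point to be the self-referential step: the identification $-R(t) = \nabla^2_{xx} V(t,\overline{x}(t))$ provided by Theorem \ref{PropForward} is precisely what allows the semiconcavity bound on the Hessian to be fed back into the Riccati dynamics as a lower bound on $R$; the remainder is a standard comparison argument.
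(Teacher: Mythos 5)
Your proof is correct and follows essentially the same route as the paper: on the maximal interval of existence you bound $R$ from above by comparison with the linear equation (using $H_{pp}[\cdot]\geq 0$) and from below via semiconcavity of $V$ combined with the identification $R(t)=-\nabla^2_{xx}V(t,\overline{x}(t))$ from Theorem \ref{PropForward}, then conclude that a bounded solution cannot blow up. The only cosmetic difference is that you obtain the lower bound through Remark \ref{aug16a} and the superjet/subjet ordering (as in the proof of Theorem \ref{IncluRiccatiSub}), whereas the paper invokes directly the fact that the Hessian of a semiconcave function is bounded above by $cI_n$; both arguments are equivalent, and both (like the paper) need the symmetry of $R(t)$ to convert the two-sided quadratic-form bounds into an operator-norm bound.
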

\begin{proof}
Set $a=\sup\lbrace t\in [t_0,T]:\, R(\cdot) \mbox{ is defined on }
[t_0,t]\rbrace. $ Thus, either $R$ is well defined  on $[t_0,T]$
or $\parallel R(t) \parallel \rightarrow \infty$ as $t \rightarrow
a^-$. By the semiconcavity of $V(t,\cdot)$, a constant $c>0$
exists such that $R(t)= - \nabla^2_{xx} V(t,\overline{x}(t))\geq
-c I_n$ (in the sense of quadratic forms) for all $t\in [t_0,a)$.
On the other hand, the solution $\tilde{R}$ of the linear equation
\begin{equation}\label{LinearPDE}
\left\lbrace
\begin{array}{l}
\dot{\tilde{R}}(t)+ H_{px}[t]\tilde{R}(t)+\tilde{R}(t)H_{xp}[t] + H_{xx}[t]=0\quad \textit{ on }[t_0,T], \\
\tilde{R}(t_0)=-\nabla_{xx}V(t_0,x_0).
\end{array}
\right.
\end{equation}
verifies, as a quadratic form, the inequality $\tilde{R}\geq R$ on
$[t_0,a)$ because $H_{pp}[\cdot]\geq 0$. The fact that the
solution of \eqref{LinearPDE} is well defined on $[t_0,T]$
provides a bound from  above for $R(\cdot)$ on $[t_0,a)$. These
observations and the symmetry of $R(t)$ imply that $R$ is bounded
on $[t_0,a)$, excluding the possibility of a blow-up at time $a$.
So, the solution to \eqref{DeivSecBase} is defined on $[t_0,T]$.
\end{proof}
We say that $Q\in M(n)$ is a \emph{reachable Hessian} of $V(t,\cdot)$ at $x$ if a sequence $\lbrace x_j\rbrace \subset \mathbb{R}^n$ converging to $x$ exists such that $V(t,\cdot)$ is twice Fr\'echet differentiable at $x_j$, and $Q= \lim_{j\rightarrow \infty} \nabla^2_{xx} V(t,x_j).$ The set of all reachable Hessians of $V(t,\cdot)$ at $x$ is denoted by $\partial^{*}_{xx} V(t,x)$. By construction, $\partial^{*}_{xx} V(t,x)$ is a subset of $S(n)$. 
The question naturally arises whether we can guarantee the forward propagation of the solution of the Riccati equation in the set of all reachable Hessians of $V(t,\cdot)$ along an optimal trajectory.
\begin{proposition}
Assume $(SH)$,  and suppose  $H \in C^{2} (\mathbb{R}^n \times ( \mathbb{R}^n
\setminus \lbrace 0\rbrace))$ and $\phi$ is locally
Lipschitz with $\partial^+ \phi(z)\neq \emptyset$ for all $z\in
\mathbb{R}^n$. Let $\overline{x}$ be an optimal solution for
$\mathcal{P}(t_0,x_0)$ such that
 $\partial^+ \phi(\overline{x}(T))\neq \lbrace 0 \rbrace$ and consider a dual arc $\overline{p}$ satisfying $ -\overline{p}(T)\in \partial^+ \phi(\overline{x}(T))\setminus \lbrace 0 \rbrace $.
If $V(t_0,\cdot)$ is of class $C^{1}$ in a neighborhood of $x_0$
and  for some $a\in (t_0, T]$ and $Q\in \partial^{*}_{xx}
V(t_0,x_0)$, the Riccati equation
\begin{equation}\label{DeivSec}
\left\lbrace
\begin{array}{l}
\dot{R}(t)+ H_{px}[t]R(t)+R(t)H_{xp}[t] + R(t)H_{pp}[t]R(t) + H_{xx}[t]=0, \\
R(t_0)=- Q,
\end{array}
\right.
\end{equation}
has a solution $R(\cdot)$ defined on $[t_0,a]$, then, for all $ t\in[t_0,a]$,
\begin{equation}\label{IncluSecondReach}
R(t)\in - \partial^{*}_{xx} V(t,\overline{x}(t)).
\end{equation}
Moreover, if $\phi$ is a locally semiconcave function, then
$R(\cdot)$ is well defined on $[t_0,T]$, and so \eqref{IncluSecondReach} is true
on the whole interval $[t_0,T]$.
\end{proposition}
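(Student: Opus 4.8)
The plan is to reduce the statement to Theorem \ref{PropForward} by approximating $x_0$ with points at which $V(t_0,\cdot)$ is already twice Fr\'echet differentiable, and then to pass to the limit in the associated Riccati flows. Since $Q\in\partial^*_{xx}V(t_0,x_0)$, by definition there is a sequence $x_j\to x_0$ such that $V(t_0,\cdot)$ is twice Fr\'echet differentiable at $x_j$ with $\nabla^2_{xx}V(t_0,x_j)\to Q$. For $j$ large, $x_j$ lies in a ball $B(x_0,\rho)$ on which $V(t_0,\cdot)$ is of class $C^1$; following the proof of Theorem \ref{PropForward}, let $(x_j,p_j)$ be the unique optimal trajectory and associated dual arc issuing from $x_j$, solving the characteristic system with $p_j(t_0)=-\nabla_xV(t_0,x_j)$. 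By Corollary \ref{Differentiability} we have $\nabla_xV(t_0,x_0)=-\overline{p}(t_0)$, which is nonzero since $\overline{p}(T)\neq 0$ forces $\overline{p}$ to be nonvanishing (Remark \ref{RemarkDualArc}); by continuity of $\nabla_xV(t_0,\cdot)$, therefore $p_j(t_0)\neq 0$ for $j$ large, so each $p_j$ is nonvanishing on $[t_0,T]$, again by Remark \ref{RemarkDualArc}.

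First I would establish convergence of the data of the relevant Riccati equations. The characteristic system is smooth on the region $p\neq 0$, and its limiting solution $\overline{p}$ is bounded away from $0$ on the compact interval $[t_0,T]$; hence, by continuous dependence on initial conditions, $(x_j,p_j)\to(\overline{x},\overline{p})$ uniformly on $[t_0,T]$. Consequently the Riccati coefficients $\nabla^2_{px}H(x_j(t),p_j(t))$, and likewise the remaining second derivatives of $H$ along $(x_j,p_j)$, converge uniformly on $[t_0,T]$ to $H_{px}[t]$, etc., while the initial data $-\nabla^2_{xx}V(t_0,x_j)\to -Q=R(t_0)$. Denoting by $R_j$ the solution of the Riccati equation with these coefficients and initial value $-\nabla^2_{xx}V(t_0,x_j)$, I would then invoke the standard continuous dependence of ODE solutions on data over a compact interval: since the limiting solution $R$ exists on $[t_0,a]$ and takes values in a compact set, for $j$ large $R_j$ is defined on all of $[t_0,a]$ and $R_j\to R$ uniformly there. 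This limit passage is the main obstacle, since one must guarantee that the flows $R_j$ survive on the whole interval (rather than blowing up earlier) and converge; this is exactly why the nonvanishing of $\overline{p}$, which keeps the trajectories in the region where $H$ is $C^2$ and the coefficients converge uniformly, is essential.

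With this in hand, the conclusion follows from the second part of Theorem \ref{PropForward} applied to each $x_j$: as $V(t_0,\cdot)$ is twice Fr\'echet differentiable at $x_j$ and $R_j$ is defined on $[t_0,a]$, the function $V(t,\cdot)$ is twice Fr\'echet differentiable at $x_j(t)$ with $\nabla^2_{xx}V(t,x_j(t))=-R_j(t)$ for all $t\in[t_0,a]$. Fixing $t\in[t_0,a]$ and letting $j\to\infty$, we have $x_j(t)\to\overline{x}(t)$ and $\nabla^2_{xx}V(t,x_j(t))=-R_j(t)\to -R(t)$, so by the very definition of reachable Hessian $-R(t)\in\partial^*_{xx}V(t,\overline{x}(t))$, which is precisely \eqref{IncluSecondReach}.

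For the locally semiconcave case, I would prove global existence of $R$ by a two-sided bound on its maximal interval of existence $[t_0,b)$. The upper bound is the comparison argument already used for Theorem \ref{IncluRiccatiSub} and the corollary to Theorem \ref{PropForward}: since $H_{pp}[\cdot]\geq 0$, the solution $R$ is dominated, as a quadratic form, by the solution of the associated linear equation, which is globally defined and bounded on $[t_0,T]$. For the lower bound, \eqref{IncluSecondReach}, valid on every compact subinterval of $[t_0,b)$ by the preceding argument, shows that $-R(t)$ is a reachable Hessian of the locally semiconcave function $V(t,\cdot)$ at $\overline{x}(t)$; since reachable Hessians of a semiconcave function satisfy $-R(t)\leq cI$ by Theorem \ref{Alex}, with $c$ uniform on a compact neighborhood of the graph of $\overline{x}$, we obtain $R(t)\geq -cI$ on $[t_0,b)$. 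The resulting uniform bound on $\|R(t)\|$ excludes a blow-up at $b$, forcing $b=T$ and $R$ to be defined on $[t_0,T]$, so that \eqref{IncluSecondReach} holds on the whole interval.
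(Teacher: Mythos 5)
Your proof is correct and follows essentially the same route as the paper's: approximate $x_0$ by points $x_j$ of twice Fr\'echet differentiability realizing $Q$, apply Theorem \ref{PropForward} along the (unique) trajectories from $x_j$, and pass to the limit in the perturbed Riccati equations, whose solutions survive on $[t_0,a]$ and converge to $R$ by continuous dependence. Your treatment of the semiconcave case via the two-sided bound (linear comparison from above, semiconcavity from below) is also the argument the paper relies on, just spelled out more explicitly than in its terse proof.
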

\begin{proof}
 Consider a sequence $x_j^0 \rightarrow x_0$ such that $\lim_{j\rightarrow \infty} \nabla_{xx}^2 V(t_0,x_j^0)=Q$.
  Let $x_j(\cdot)$ be optimal for $\mathcal{P}(t_0,x_j^0)$ and  $p_j(\cdot)$ be a dual arc corresponding to
   $x_j(\cdot)$ with $-p_j (T)\in \partial^+ \phi (x_j(T))$. It is not difficult to verify that for all large $j$, $(x_j,p_j)$ is uniquely defined.
    Setting $H_{px}^j[t]:=\nabla^2_{px} H (x^j(t),p^j(t))$, we have that $H_{px}^j[\cdot]$ converges to $H_{px}[\cdot]$
    uniformly on $[t_0,T]$, and similarly for the other partial Hessians of $H$. Let $R_j$
denote the solution of  \eqref{DeivSec} with $Q$ replaced by
$\nabla_{xx}^2 V(t_0,x_j^0)$, $H_{px}[t]$ by $H_{px}^j[t]$, and
similarly for the other partial Hessian of $H$. Then, from a
standard argument based on Filippov's Theorem (see, e.g., Theorem
10.4.1 in \cite{MR1048347}), we deduce that $R_j$ are well-defined
on $[t_0,a]$ for $j$ large enough, and they converge uniformly to
$R$ on $[t_0,a]$. Applying Theorem \ref{PropForward} and passing
to limit, we conclude that \eqref{IncluSecondReach} holds true.
\end{proof}
\subsection{Backward propagation of twice Fr\'echet differentiability}
 In general, the differentiability of $V(t,\cdot)$ does not propagate along
optimal trajectories
 backward in time. However,  this happens when  optimal solutions are unique. Then the twice Fr\'chet differentiability
 propagates
 backward in time,
  as long as there is no conjugate time.
\begin{theorem}\label{Back_prop}
Assume $(SH)$,  and suppose $H \in C^{2} (\mathbb{R}^n \times ( \mathbb{R}^n
\setminus \lbrace 0\rbrace))$ and $\phi \in C^{1,1}_{loc}$.
Suppose that for some $(t_0,x_0) \in (-\infty,T)\times
\mathbb{R}^n$ and $r>0$, $\mathcal{P}(t_0,y_0)$ has a unique
optimal trajectory for every $y_0 \in B(x_0,r)$. Let
$\overline{x}$ be the optimal solution of $\mathcal{P}(t_0,x_0)$
such that $\nabla \phi (\overline{x}(T))\neq 0$.  If $\phi$ is
twice Fr\'echet differentiable at $\overline{x}(T)$ and  the
solution $R$ of the Riccati equation
\begin{equation}\label{Back_Riccati}
\left\lbrace
\begin{array}{l}
\dot{R}(t)+ H_{px}[t]R(t)+R(t)H_{xp}[t] + R(t)H_{pp}[t]R(t) + H_{xx}[t]=0, \\
R(T)=-\nabla^2 \phi (\overline{x}(T)),
\end{array}
\right.
\end{equation}
is defined on $[a,T]$, for some $a\in [ t_0, T)$, then $V(t,\cdot)$ is 
twice Fr\'echet differentiable at $\overline{x}(t)$ for any $t\in
[a,T]$. Moreover, $R(t)=- \nabla^2_{xx} V(t,\overline{x}(t))$.
\end{theorem}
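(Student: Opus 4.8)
The plan is to mirror the forward argument of Theorem~\ref{PropForward}, but running the characteristic flow \emph{backward} from the final time $T$ and replacing the hypothesis ``$V(t_0,\cdot)$ twice differentiable at $x_0$'' by ``$\phi$ twice differentiable at $\overline{x}(T)$''; the uniqueness assumption is precisely what compensates for the fact that, backward in time, differentiability of $V$ need not propagate for free. Since $\nabla\phi(\overline{x}(T))\neq 0$ and $\phi\in C^1$, Remark~\ref{RemarkDualArc} forces every dual arc to be nonvanishing, so $(\overline{x},\overline{p})$ solves the \emph{smooth} Hamiltonian system on $[a,T]$, and for $z$ in a neighborhood of $\overline{x}(T)$ the terminal value problem $\dot x=\nabla_pH(x,p)$, $-\dot p=\nabla_xH(x,p)$, $x(T)=z$, $p(T)=-\nabla\phi(z)$ has a unique solution $(x(\cdot;z),p(\cdot;z))$ on $[a,T]$, staying in $\{p\neq 0\}$ by continuous dependence (here $\nabla\phi$ is Lipschitz because $\phi\in C^{1,1}_{loc}$). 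By the uniqueness hypothesis and the forced nonvanishing of its dual arc, the optimal trajectory issued from any $y_0$ close to $x_0$ is exactly one of these characteristics, with $z$ its endpoint.

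First I would establish that $V(s,\cdot)$ is differentiable at $x(s;z)$ for every such $z$ and every $s\in[a,T]$, with $\nabla_xV(s,x(s;z))=-p(s;z)$. The inclusion $-p(s;z)\in\partial_x^+V(s,x(s;z))$ holds along any optimal arc with nonvanishing dual arc, as in the proof of Corollary~\ref{Differentiability} (via \cite[Theorem~3.4]{Nostro}). For the reverse inclusion I would use uniqueness: a concatenation argument along the optimal arc shows that $\mathcal{P}(s,x(s;z))$ also has a unique optimal trajectory, so by Proposition~\ref{TheoReach} its only nonzero reachable gradient is $-p(s;z)$; and since $\nabla\phi\neq 0$ near $\overline{x}(T)$ rules out a vanishing dual arc, $0$ is not reachable either. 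Hence $\partial_x V(s,x(s;z))=\{-p(s;z)\}$ is a singleton, and a singleton generalized gradient forces $V(s,\cdot)$ to be (strictly, hence Fr\'echet) differentiable there (see \cite{MR709590}). Injectivity of $z\mapsto x(s;z)$ — two characteristics sharing their state and, through the gradient relation, their costate at time $s$ must coincide — together with the invariance-of-domain theorem shows that $\{x(s;z)\}$ fills an open neighborhood of $\overline{x}(s)$. Thus $V(s,\cdot)$ is in fact $C^1$ near $\overline{x}(s)$, which supplies both the gradient relation and the covering needed to upgrade the pointwise expansions below to genuine Fr\'echet differentiability.

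For the second-order statement I would differentiate the identity $\nabla_xV(t,x(t;z))=-p(t;z)$ in $z$ at $z=\overline{x}(T)$, exactly as in Theorem~\ref{PropForward}. By Lemma~\ref{LemmaTacnico} the flow is differentiable in its data, so if $\Psi$ denotes the fundamental matrix of the linearized Hamiltonian system \eqref{CP_prem} normalized by $\Psi(T)=I_{2n}$, then the increment $\big(x(t;z)-\overline{x}(t),\,p(t;z)-\overline{p}(t)\big)$ equals $\Psi(t)$ applied to the vector with components $z-\overline{x}(T)$ and $p(T;z)-\overline{p}(T)$, up to a remainder $o(\,|z-\overline{x}(T)|+|p(T;z)-\overline{p}(T)|\,)$. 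Because $\phi$ is twice differentiable at $\overline{x}(T)$, the terminal costate expands as
\[
p(T;z)-\overline{p}(T)=-\nabla\phi(z)+\nabla\phi(\overline{x}(T))=R(T)\,(z-\overline{x}(T))+o(|z-\overline{x}(T)|),\qquad R(T)=-\nabla^2\phi(\overline{x}(T)),
\]
the backward analogue of \eqref{Base3}, which collapses the remainder to $o(|z-\overline{x}(T)|)$. The map $s\mapsto\Psi(s)(I_n,R(T))^{*}$ is the solution $(Y,P)$ of \eqref{CP_prem} with $(Y(T),P(T))=(I_n,-\nabla^2\phi(\overline{x}(T)))$, and $R(t)=P(t)Y(t)^{-1}$ on $[a,T]$, where $Y(t)$ is invertible precisely because $R$ is defined on $[a,T]$, i.e.\ there is no conjugate time. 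Writing $x(t;z)-\overline{x}(t)=Y(t)(z-\overline{x}(T))+o(|z-\overline{x}(T)|)$ and inverting $Y(t)$ to pass to the variable $x(t;z)-\overline{x}(t)$, the costate expansion gives
\[
\nabla_xV(t,x(t;z))-\nabla_xV(t,\overline{x}(t))=-R(t)\,\big(x(t;z)-\overline{x}(t)\big)+o\big(|x(t;z)-\overline{x}(t)|\big).
\]
Since $x(t;z)$ covers a neighborhood of $\overline{x}(t)$, this is exactly Fr\'echet differentiability of $\nabla_xV(t,\cdot)$ at $\overline{x}(t)$, that is, twice Fr\'echet differentiability of $V(t,\cdot)$ with $\nabla^2_{xx}V(t,\overline{x}(t))=-R(t)$. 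As a cross-check of the Hessian, Theorem~\ref{Theo_SuperJet} applies with $(q,Q)=(\nabla\phi(\overline{x}(T)),\nabla^2\phi(\overline{x}(T)))\in J^{2,+}\phi(\overline{x}(T))$, $q\neq 0$, yielding $(-\overline{p}(t),-R(t))\in J^{2,+}_xV(t,\overline{x}(t))$, which must agree with the Hessian once differentiability is known.

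The delicate point is the backward first-order step. Unlike the forward case, I cannot invoke Corollary~\ref{Differentiability} to obtain differentiability of $V(s,\cdot)$, and since $\phi$ is only twice differentiable at the single point $\overline{x}(T)$ (and not $C^2$ on a neighborhood) Theorem~\ref{TheoPntConju} is not directly available either. The entire argument therefore rests on extracting differentiability from \emph{uniqueness} of optimal trajectories — through the singleton character of the Clarke gradient, together with the forced nonvanishing of dual arcs — and on the topological covering argument guaranteeing that the pointwise second-order expansions are uniform over a full neighborhood of $\overline{x}(t)$, so that they genuinely encode Fr\'echet (rather than merely directional) twice differentiability.
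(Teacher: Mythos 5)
Your outline matches the paper's proof in its two main moves: extract first-order $C^1$ regularity of $V$ near the optimal trajectory from the uniqueness hypothesis, then obtain the second-order expansion by linearizing the Hamiltonian flow, expanding $\nabla\phi$ at $\overline{x}(T)$ (the only place where twice differentiability of $\phi$ at that single point is used) and identifying $R=PX^{-1}$; your second-order computation is essentially the paper's, i.e.\ \eqref{impo}--\eqref{A2} followed by inversion of $X$. The genuine problem is that your first-order step is circular. Every tool you apply at a point $x(s;z)$ --- the superdifferential inclusion of \cite[Theorem~3.4]{Nostro}, the concatenation argument giving uniqueness for $\mathcal{P}(s,x(s;z))$, Proposition~\ref{TheoReach}, and the injectivity of $z\mapsto x(s;z)$ ``through the gradient relation'' --- is valid only if the backward characteristic $x(\cdot;z)$ is an \emph{optimal} trajectory. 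What you actually establish is the converse inclusion: optimal trajectories issued from points near $x_0$ are characteristics. You never show that the characteristics issued from a full open neighborhood of $\overline{x}(T)$ are optimal, i.e.\ that the set of terminal points of optimal trajectories contains a neighborhood of $\overline{x}(T)$; this is exactly what your covering argument is supposed to produce, so the argument assumes what it must prove (invariance of domain needs injectivity on an \emph{open} set of $z$'s, and your injectivity already invokes the gradient relation, hence optimality). Nor can local injectivity of $z\mapsto x(s;z)$ be rescued from the absence of conjugate times: since $\phi$ is only $C^{1,1}_{loc}$ and twice differentiable at the single point $\overline{x}(T)$, the map $z\mapsto x(s;z)$ is differentiable only at $z=\overline{x}(T)$, and an invertible derivative at one point does not give injectivity on a neighborhood.

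The gap is repairable, and the two possible repairs show what the paper does differently. The paper applies uniqueness and the reachable-gradient argument only at time $t_0$, concluding that $V(t_0,\cdot)$ is $C^1$ near $x_0$ (local semiconcavity of $V$ plus singleton $\partial^{*}_x V(t_0,\cdot)$, with $0$ excluded as a reachable gradient); it then invokes the first statement of Theorem~\ref{PropForward} to propagate $C^1$ regularity of $V(t,\cdot)$ to a neighborhood of $\overline{x}(t)$ for every $t$, and builds its covering map \emph{forward} in time, $\Gamma_t(y_0)=x(T;y_0)$ with initial costate $-\nabla_x V(t,y_0)$. There the circularity disappears: all these characteristics are optimal because they start with the value-function gradient, and two of them reaching the same terminal point carry the same terminal costate $-\nabla\phi$ of that point, hence coincide by backward uniqueness of the ODE. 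Alternatively, your backward parametrization can be saved by one additional invariance-of-domain argument at time $t_0$: the endpoint map $y_0\mapsto z(y_0)$, assigning to $y_0\in \mathring{B}(x_0,r_1)$ the terminal point of its unique optimal trajectory, is injective (again by backward uniqueness of the smooth Hamiltonian system from the data $(z,-\nabla\phi(z))$) and continuous (by uniqueness plus a compactness argument), so its image is an open neighborhood of $\overline{x}(T)$; restricting your family of characteristics to that image makes them all optimal, and the remainder of your argument then goes through unchanged.
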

\begin{proof}
Let $\epsilon >0$ be such that $\nabla \phi (x)\neq 0$ for all $x
\in B(\bar x(T),\epsilon)$.  By \cite[Theorem~4.1]{MR2728465}),
$V$ is locally semiconcave.
 We claim that
$V(t_0,\cdot)$ is continuously differentiable on a neighborhood of
$x_0$. Indeed, from the uniqueness assumption it is not difficult
to deduce that for some $r_1 \in (0,r)$,  the optimal trajectory
$y:[t_0,T]\rightarrow \mathbb{R}^n$ for  problem
$\mathcal{P}(t_0,y_0)$ with $y_0 \in B(x_0,r_1)$ satisfies $y(T)
\in B(\bar x(T),\epsilon)$ and  admits the unique dual arc
$p:[t_0,T]\rightarrow \mathbb{R}^n$.  Then  the pair $(y, p)$
solves the system
\begin{equation}\label{hamy}
\left\{\begin{array}{rlrrrl}
\dot{x}(t) &=& \nabla_p H(x(t),p(t)), &  x(T)&=&x_T, \\
-\dot{p}(t) &=& \nabla_x H(x(t),p(t)),& -p(T)&=&\nabla \phi (x_T),
\end{array}\right.
t\in \left[ t_0, T \right],
\end{equation}
for $x_T=y(T)$.
   If $V(t_0,\cdot)$ is differentiable at $y_0 \in B(x_0,r_1)$, then $p(t_0)=-\nabla_xV(t_0,y_0)$.  Using  arguments
similar to those of the proof of Proposition \ref{TheoReach} we
check that $0 \notin \partial^{*}_x V(t_0,y_0)$  for every $y_0
\in \mathring{B}(x_0,r_1)$. This and Proposition \ref{TheoReach}
imply that $\partial^{*}_x V(t_0,y_0)$ is a singleton for every
such $y_0$ and our claim is proved.

By Theorem \ref{PropForward}, $V(t,\cdot)$ is continuously
differentiable on a neighborhood of $\bar x(t)$  for all $t \in
[t_0,T]$.

Fix $t\in[a,T]$ and let $r_2 > 0$ be such that $V(t,\cdot)$ is
continuously differentiable on $B(\bar x(t),r_2)$ and
$\nabla_xV(t,y_0) \neq 0$ for all $y_0 \in B(\bar x(t),r_2)$.
Consider the Hamiltonian system
\begin{equation}\label{Variation2}
\left\{
\begin{array}{rlrl}
\dot{x}(s)&= \nabla_p H(x(s),p(s)),& x(t)&=y_0, \\
-\dot{p}(s)&= \nabla_x H(x(s),p(s)),& p(t)&= -\nabla_xV(t,y_0).
\end{array}\right.
\end{equation}
and denote its solution  by $(x(\cdot;y_0),p(\cdot;y_0))$. As in
the proof of Theorem \ref{PropForward} we check that
 the map $\Gamma_t (y_0):=x(T,y_0)$
 is a homeomorphism from $\mathring{B}(\overline{x}(t),r_2)$
onto the open set $\Gamma_t (\mathring{B}(\overline{x}(t),r_2) )$.
Let $\delta >0$ be such that $B(\bar x(T),\delta) \subset \Gamma_t
(\mathring{B}(\overline{x}(t),r_2) )$. To prove that $V(t,\cdot)$
is twice Fr\'echet differentiable at $\overline{x}(t)$ consider
the solution $(X,P)$ to the variational system:
\begin{equation}\label{CP*}
\left\{\begin{array}{rllrrl}
\dot{X}(s)&=&H _{xp} [s]X(s) + H_{pp}[s] P(s) ,& X(T)&=& I, \\
-\dot{P}(s)&=& H_{xx}[s] X(s)+ H_{px} [s] P(s),&
-P(T)&=&\nabla^2\phi(\overline{x}(T)),
\end{array}\right.
\end{equation}
on $ [t_0 , T ]$, and the solution $(x,p)$ to \eqref{hamy} with
$x_T \in B(\overline{x}(T),\delta)$.  Since $\nabla \phi$ is
locally Lipschitz and Fr\'echet differentiable at $\bar x(T)$,
\begin{equation}\label{impo}
x(t)-\overline{x}(t)= X(t)(x(T)- \overline{x}(T))+ o_t(\mid x(T)- \overline{x}(T) \mid),\quad \forall t\in [t_0,T],
\end{equation}
and
\begin{equation}\label{A1}
p(t)-\overline{p}(t)= P(t)(x(T)- \overline{x}(T))+ o_t(\mid x(T)- \overline{x}(T) \mid),\quad \forall t\in [t_0,T].
\end{equation}
Moreover, $X$ is invertible on $[a,T]$, and $R= P X^{-1}$ on the same interval. Note that
\begin{equation}\label{A2}
p(t)-\overline{p}(t)=- (\nabla_x V(t,x(t))-  \nabla_x
V(t,\overline{x}(t))).
\end{equation}
From \eqref{impo}-\eqref{A2} and the invertibility of $X$ on
$[a,T]$, we get
\begin{equation*}
\nabla_x V(t,x(t))- \nabla_x V(t,\overline{x}(t))= - P(t)X(t)^{-1}(x(t)- \overline{x}(t))+ o_t(\mid x(t)- \overline{x}(t) \mid),\quad \forall t\in [a,T].
\end{equation*}
Since $\Gamma_t^{-1}(\mathring{B}(\bar x(T),\delta))$ is an open
neighborhood of  $\bar x(t) $,   the proof is complete.
\end{proof}

\subsection{Local $C^2$ regularity of the value function}
In the theorem below we shall prove that the existence of a
nonzero proximal subgradient of $V(t_0,\cdot)$ at $x_0$ is
sufficient for the $C^{2}$ regularity of $V$ in a neighborhood
$(t_0, x_0)$, as well as for its $C^{2}$ regularity in a
neighborhood of an optimal trajectory starting from such point.
The proof is based upon ideas from \cite{frankowska:hal-00851752},
where the first two authors of this paper have investigated the
Bolza problem in the calculus of variations assuming the
Hamiltonian to be strictly convex in the $p$ variable. However, in
our context this assumption cannot be satisfied; indeed, $H$ is
positively homogeneous in $p$ and $\nabla^2_{pp} H(x,p)p=0$ for
all $p\neq 0$, whenever $\nabla^2_{pp} H(x,p)$ exists. The proof
proposed below simplifies the argument in
\cite{frankowska:hal-00851752}. Note that here we cannot expect
the same result to be valid for null proximal subgradients, since
the Hamiltonian is singular at $p=0$.
\begin{theorem}\label{RegularityValueFunction}
Assume $(SH)$,   and suppose $H \in C^{2} (\mathbb{R}^n \times ( \mathbb{R}^n
\setminus \lbrace 0\rbrace))$ and $\phi \in C^{2}(\mathbb{R}^n)$.
Let $(t_0,x_0)\in (-\infty,T]\times\mathbb{R}^n$ and let
$\overline{x}$ be an optimal solution for $\mathcal{P}(t_0,x_0)$
such that $\nabla\phi(\overline{x}(T))\neq 0$. If $\partial_x
^{-,pr} V(t_0,x_0)\neq \emptyset$, then $V(t,\cdot)$ is of class
$C^{2}$ in a neighborhood of $\overline{x}(t)$ for all $t \in
[t_0,T]$. Moreover, if $H$ is of class
$C^{2,1}_{loc}(\mathbb{R}^n\times (\mathbb{R}^n \smallsetminus
\lbrace 0 \rbrace))$ and, for some $0 < m \leq 1$, $\phi$ is of
class $C^{2,m}_{loc}(\mathbb{R}^n)$, then $V(\cdot,\cdot)$ is of
class $C^{2,m}$ in a neighborhood of $(t,\overline{x}(t))$ for all
$t\in [t_0,T)$.
\end{theorem}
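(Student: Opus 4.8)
The plan is to rule out conjugate times on $[t_0,T]$ for the backward Riccati equation \eqref{Back_Riccati} and then invoke Theorem~\ref{TheoPntConju}. First I would fix the right dual arc. Since $\phi\in C^2$, $V$ is locally semiconcave (by \cite{MR2728465}), so $\partial_x^{-,pr}V(t_0,x_0)\neq\emptyset$ forces $\partial_x^-V(t_0,x_0)\neq\emptyset$ and hence the differentiability of $V(t_0,\cdot)$ at $x_0$, with $\partial_x^{-,pr}V(t_0,x_0)=\{\nabla_xV(t_0,x_0)\}$. Because $\phi\in C^1$ and $\nabla\phi(\overline x(T))\neq0$, the maximum principle (Remark~\ref{UltimoRemark}) produces a dual arc $\overline p$ with $-\overline p(T)=\nabla\phi(\overline x(T))\neq0$, which is nonvanishing by Remark~\ref{RemarkDualArc}. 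Corollary~\ref{Differentiability} then gives $\nabla_xV(t,\overline x(t))=-\overline p(t)$ for all $t\in[t_0,T]$; in particular $-\overline p(t_0)=\nabla_xV(t_0,x_0)\in\partial_x^{-,pr}V(t_0,x_0)$, so $(\overline x,\overline p)$ solves the forward system \eqref{sub_prox}.

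Next I would propagate the proximal subgradient. Applying Theorem~\ref{Lemma_sub_prox} to $(\overline x,\overline p)$ yields a constant $c>0$, uniform in $t$, such that $(-\overline p(t),-cI_n)\in J^{2,-}_xV(t,\overline x(t))$ for every $t\in[t_0,T]$. In particular, at any time $t$ at which $V(t,\cdot)$ happens to be twice differentiable at $\overline x(t)$, the subjet inclusion forces $\nabla^2_{xx}V(t,\overline x(t))\geq -cI_n$, equivalently the \emph{a priori} one-sided bound $-\nabla^2_{xx}V(t,\overline x(t))\leq cI_n$.

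The crux is to turn this one-sided bound into the absence of conjugate times. Let $R$ solve the backward Riccati equation \eqref{Back_Riccati} with $R(T)=-\nabla^2\phi(\overline x(T))$ and let $t_c$ be its conjugate time. Since $H_{pp}[\cdot]\geq0$, comparison with the linear equation obtained by dropping the quadratic term shows that $R$ stays bounded below, so the only possible blow-up at $t_c$ is to $+\infty$: there exist $x_t\in S^{n-1}$ with $\langle R(t)x_t,x_t\rangle\to+\infty$ as $t\searrow t_c$. On the other hand, on $(t_c,T]$ there is no conjugate time, so Theorem~\ref{TheoPntConju} together with its subsequent Remark gives $V(t,\cdot)\in C^2$ near $\overline x(t)$ with $R(t)=-\nabla^2_{xx}V(t,\overline x(t))$; combined with the previous paragraph this yields $R(t)\leq cI_n$ on $(t_c,T]\cap[t_0,T]$, hence $\langle R(t)x_t,x_t\rangle\leq c$, contradicting the blow-up. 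Therefore $t_c<t_0$, no conjugate time lies in $[t_0,T]$, and Theorem~\ref{TheoPntConju} delivers the claimed spatial $C^2$ regularity of $V(t,\cdot)$ near $\overline x(t)$ for every $t\in[t_0,T]$. I expect this step---using the proximal (rather than Fr\'echet) sensitivity relation to exclude the $+\infty$ blow-up---to be the true heart of the argument.

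For the $C^{2,m}$ refinement I would upgrade the regularity of the characteristic flow: under $H\in C^{2,1}_{loc}$ and $\phi\in C^{2,m}_{loc}$, the last assertion of Theorem~\ref{TheoPntConju} makes $\Phi_t\in C^{1,m}(\mathcal O_t)$ with a H\"older constant independent of $t$, so $\nabla_xV(t,\cdot)=-\Phi_t(\cdot)$ is $C^{1,m}$ in $x$, uniformly in $t$. To promote this to joint regularity in $(t,x)$, I would note that the solution map $(t,z)\mapsto(x(t,z),p(t,z))$ of \eqref{CJ*} is jointly $C^{1,m}$ (the field $\nabla H$ being $C^{1,1}$), and that invertibility of $z\mapsto x(t,z)$ away from conjugate times lets the implicit function theorem express $(t,x)\mapsto\Phi_t(x)$ as a jointly $C^{1,m}$ map. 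Since $V$ is then $C^1$ in $(t,x)$ and satisfies \eqref{HJB} classically, $V_t=H(x,-\nabla_xV)$ holds on a neighborhood, and differentiating this identity once more---using $H\in C^{2,1}_{loc}$ and the $C^{1,m}$ joint regularity of $\Phi$---shows that $V_{tt}$, $\nabla_xV_t$ and $\nabla^2_{xx}V$ are all H\"older continuous of exponent $m$. This gives $V\in C^{2,m}$ near $(t,\overline x(t))$ for $t\in[t_0,T)$; the bookkeeping in this last bootstrap is the only genuinely technical (though routine) obstacle.
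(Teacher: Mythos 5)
Your proposal is correct and is essentially the paper's own argument: both proofs hinge on propagating the proximal subgradient via Theorem~\ref{Lemma_sub_prox} to obtain a uniform one-sided bound on $\nabla^2_{xx}V(t,\overline{x}(t))$ along the optimal trajectory, then rule out conjugate times for \eqref{Back_Riccati} by contradiction, and conclude with Theorem~\ref{TheoPntConju} (and, for the $C^{2,m}$ statement, the bootstrap through the Hamilton--Jacobi equation \eqref{HJB}, which the paper delegates to an earlier reference). The only cosmetic difference lies in the complementary bound used in the contradiction: you invoke the Riccati comparison of Section~2.4 to see that any blow-up must be to $+\infty$ in some direction, whereas the paper bounds the Hessian from above via the local semiconcavity of $V$ --- two interchangeable ways of supplying the same estimate.
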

\begin{proof} Set $z_0=\bar x(T).$
From Theorem \ref{TheoDualArc2} and Remark \ref{RemarkDualArc} we deduce that
there exists a nonvanishing arc $\overline{p}: [t_0,T]\rightarrow
\mathbb{R}^n$ such that
$(\overline{x}(\cdot),\overline{p}(\cdot))$ solves the system:
\begin{equation}\label{CJ}
\left\{\begin{array}{rllrrl}
\dot{x}(t)&=& \nabla_p H (x(t),p(t)), & x(T)&=&z_0,\\
-\dot{p}(t)&=& \nabla_x H (x(t),p(t)), & p(T)&=&\nabla \phi(z_0).
\end{array}\right. t \in
[t_0,T]
\end{equation}
Since $V$ is locally semiconcave (see
\cite[Theorem~4.1]{MR2728465}) and $\partial^{-,pr}_x
V(t_0,x_0)\neq \emptyset$,  $V(t_0,\cdot)$ is Fr\'echet
differentiable at
 $x_0$, and $\partial^{-,pr}_x V(t_0,x_0)=\lbrace \nabla_x V(t_0,x_0)\rbrace $. Moreover, from Corollary \ref{Differentiability} we deduce
 that $V(t,\cdot)$ is differentiable at $\overline{x}(t)$ with $\nabla_x V (t,\overline{x}(t))= - \overline{p}(t)$ for all $t \in [t_0,T]$.
 Note also that $\lbrace -\overline{p}(t_0)\rbrace = \partial^{-,pr}V(t_0,x_0)$. Let $(X(\cdot),P(\cdot)):[t_0,T]\rightarrow M (n) \times M (n)$ be
 the solution of the following system: for $t\in [t_0,T]$,
\begin{equation}\label{CP}
\left\{\begin{array}{rllrrl}
\dot{X}(t) &=&H_{xp}[t] X(t) + H_{pp}[t] P(t) ,& X(T)&=& I, \\
-\dot{P}(t)&=& H_{xx}[t] X(t) + H_{px}[t] P(t) ,& -P(T)&=&\nabla^2
\phi(z_0).
\end{array}\right.
\end{equation}
 We prove that there is no conjugate time in $[t_0,T]$ for $z_0$. Let us argue by contradiction and suppose that a conjugate time  $t_c \in [t_0,T]$ does exist.
 Thus, for some $\theta\in S^{n-1}$,  $X(t_c)\theta=0$. Then $P(t_c)\theta\neq 0$, by the uniqueness of the solution of \eqref{CP}.
 Furthermore, for all $t>t_c$, $V(t,\cdot)$ is of class $C^{2}$ in a neighborhood of $\overline{x}(t)$ and $\nabla^2_{xx}V(t,\overline{x}(t))= - P(t)X(t)^{-1}$ by Theorem \ref{TheoPntConju}. This yields
\begin{equation}\label{limit}
\parallel \nabla^2_{xx}V(t,\overline{x}(t))\parallel\geq \frac{|P(t)\theta|}{|X(t)\theta|}\rightarrow +\infty \mbox{ as } t\rightarrow t_c^+.
\end{equation}
Our goal is to show that the above limit cannot be infinite. First
note that, since $V(t,\cdot)$ is locally semiconcave, its second
derivatives at $\overline{x}(t)$ are bounded from above uniformly
for $t>t_c$, in the sense that for some constant $c>0$ and all $T
\geq t > t_c$,
\begin{equation} \label{alto}
 \langle \nabla^2_{xx}V (t,\overline{x}(t)) \zeta , \zeta \rangle\leq c \ \mbox{ for all } \zeta \in S^{n-1}.
\end{equation}
On the other hand, since $- \overline{p}(t_0)\in \partial^{-,pr}_x V(t_0,x_0)$, from Theorem \ref{Lemma_sub_prox} we know that, for some $c_0\geq 0$ and $r_0>0$, and for all $t\in [t_0,T]$ and $h\in B(0,r_0)$,
\begin{equation}\label{1}
-c_0 |h|^2 \leq V(t,\overline{x}(t)+h)-V(t,\overline{x}(t)) -\langle \nabla_x V(t,\overline{x}(t)), h \rangle .
\end{equation}
For $t>t_c$ we have
\begin{equation}\label{2}
V(t,\overline{x}(t)+h)-V(t,\overline{x}(t)) -\langle \nabla_x V(t,\overline{x}(t)), h \rangle = \frac{1}{2}\langle \nabla^2_{xx}V(t,\overline{x}(t))h,h)+o(\mid h \mid^2).
\end{equation}
Now, combining \eqref{1} and \eqref{2} with the choice of $h=\tau
\zeta,~\zeta\in S^{n-1}, $  taking $\tau>0$  sufficiently small,
dividing by $\tau^2$, and passing to the limit as $\tau \rightarrow
0$ we get that, for all $T \geq t > t_c$,
\begin{equation}\label{basso}
-c_0 \leq \frac{1}{2} \langle \nabla^2_{xx}V (t,\overline{x}(t)) \zeta , \zeta \rangle~\ \mbox{ for all } \zeta \in S^{n-1}.
\end{equation}
Since $\nabla^2_{xx}V (t,\overline{x}(t))$ is symmetric, from \eqref{alto} and \eqref{basso} we deduce that for all $t\in (t_c,T]$
\begin{equation*}
\parallel \nabla^2_{xx}V (t,\overline{x}(t)) \parallel = \max_{\beta \in S^{n-1}} |\langle \beta, \nabla^2_{xx}V(t,\overline{x}(t))\beta \rangle |\leq \max(c_0,c),
\end{equation*}
where the constants $c,c_0$ are independent of $t\in(t_c,T]$. This
means that $\nabla^2_{xx}V(t,\overline{x}(t))$ is bounded on $
(t_c,T]$, in
 contradiction with \eqref{limit}. Thus, the interval $[t_0,T]$ does not contain any conjugate time for $z_0$. Since $\phi$ is of class $C^{2}$, the absence of
  conjugate times is equivalent to the fact that $V(t,\cdot)$ is $C^{2}$ in some neighborhood of $\overline{x}(t)$ for every $t \in [t_0,T]$, thanks to
  Theorem \ref{TheoPntConju}. Moreover, if the cost and the Hamiltonian are of class $C_{loc}^{2,m} (\mathbb{R}^n)$ and $C_{loc}^{2,1}(\mathbb{R}^n \times
  (\mathbb{R}^n\smallsetminus \lbrace 0\rbrace) )$, respectively, then  Theorem \ref{TheoPntConju} guarantees the $C^{2,m}$ regularity of $V(t,\cdot)$ in a
  neighborhood of $\overline{x}(t)$ for any $t \in [t_0,T]$. At this stage, one can prove the $C^{2,m}$ regularity of $V(\cdot,\cdot)$ in a neighborhood of
   $(t,\overline{x}(t))$ taking advantage of the fact that $V$ satisfies the Hamilton-Jacobi equation \eqref{HJB}. The proof of this step follows very closely the last part
    of the proof of Theorem 4.2 in \cite{frankowska:hal-00851752} and  is omitted.
\end{proof}
\begin{remark}
The hypothesis that $\nabla\phi(\overline{x}(T))\neq 0$ plays an
important role in the proof of the above theorem, its conclusion
would be false
 otherwise. We illustrate this fact with the following example. Let us consider  the final cost
 $\phi:\mathbb{R}\rightarrow\mathbb{R},\; \phi(z) = z^2$ and the constant set-valued map $F(x)= [-1,1]$ for $x \in \mathbb{R}$.
The associated  Hamiltonian is $H(p)=\mid p \mid$, which
satisfies the assumptions of the previous theorem.
 When $\mid x \mid \leq T-t$, any of the admissible trajectories ending at $0$ is optimal. If $x\geq T-t$, then the unique optimal trajectory from
$(t,x)$ is given by
 $y(s;t,x)= x + t - s$, for $s\in [t,T]$. Analogously, if $x\leq t-T$, the optimal trajectory is $y(s;t,x)= x- t + s$.
 We deduce that the value function is as follows: for $(t,x)\in [0,T]\times \mathbb{R}$,
\[
V(t,x)=
\left\lbrace
\begin{array}{ll}
(x + t - T)^2, & \textit{if } x \geq T - t ,\\
(x - t + T)^2, & \textit{if } x \leq t - T ,\\
0,  & \textit{otherwhise.}
\end{array}
\right.
\]
$V$ is not twice differentiable at $(t,x)$ with $x=\pm( T-t)$,
which are also the points with $ \phi'(y(T;t,x))=0$. Moreover, for such points $(t,x)$ the set $\partial_x
^{-,pr} V(t,x)$ is nonempty, since it is the singleton $\lbrace 0
\rbrace$. Hence, the fact that the proximal subgradient of
$V(t,\cdot)$ at $x$ contains zero (or, equivalently in our
context, $\nabla \phi (\overline{x}(T))=0$) does not guarantee
that $V(t,\cdot)$ is twice differentiable at $x$.
\end{remark}
\begin{example}
Here is a simple example of a system with an Hamiltonian of class $C^2 (\mathbb{R}^n \times ( \mathbb{R}^n \setminus \lbrace 0 \rbrace ))$. Consider a dynamic ${\dot x}= f(x,u)$ given by the \emph{affine} in control system:
\[ f(x)= h(x)+ g(x)u, \; u\in U \]
where $U\subset \mathbb{R}^m$ is the closed unit ball, $m \geq n$.
Suppose that $h:\mathbb{R}^n\rightarrow \mathbb{R}^n$ and
$g:\mathbb{R}^n \rightarrow \mathbb{R}^{n \times m}$ are of class
$C^2$, and that the matrix $g(x)$ has full rank for all
$x\in\mathbb{R}^n$. Then $ H(x,p)= \langle p,h(x)\rangle +  \mid
g(x)^* p \mid , $ which is clearly of class $C^2 (\mathbb{R}^n
\times ( \mathbb{R}^n \setminus \lbrace 0 \rbrace)).$  Similarly
one can consider strictly convex sets $U$ with sufficiently smooth
boundary.
\end{example}
\section*{Acknowledgements}
This work was partially supported by the European Commission
(FP7-PEOPLE-2010-ITN, Grant Agreement no.\ 264735-SADCO), and by
 the ``Instituto Nazionale di Alta Matematica'' (INdAM), through the GNAMPA Research Project 2014: ``Controllo moltiplicativo per modelli diffusivi nonlineari''.

\printbibliography

\end{document}